\numberwithin{equation}{section}
\numberwithin{figure}{section}
\numberwithin{equation}{section}
\numberwithin{figure}{section}
\numberwithin{equation}{section}
\newtheorem{theorem}{Theorem}
\newtheorem{lemma}[theorem]{Lemma}
\newtheorem{corollary}[theorem]{Corollary}
\newtheorem{proposition}[theorem]{Proposition}
\theoremstyle{definition}
\newtheorem{definition}[theorem]{Definition}
\newtheorem*{acknowledgements*}{Acknowledgements}
\theoremstyle{remark}
\newtheorem{remark}[theorem]{Remark}
\numberwithin{theorem}{section}   
\thanks{The work of the first author was completed as a part of the implementation of the development program of the Scientific and Educational Mathematical Center Volga Federal District, agreement no. 075-02-2021-1393.}
\thanks{}
\thanks{}
\subjclass[2010]{Primary 46B70; Secondary 46E30, 46M35}
\keywords{interpolation space, quasi-Banach space, $p$-convex space,  $K$-functional, Calder\'{o}n-Mityagin property, Lions-Peetre $K$-spaces}
\begin{document}
\title[Arazy-Cwikel property]{Arazy-Cwikel property for quasi-Banach couples}
\author{Sergey V. Astashkin}
\address{Astashkin:Department of Mathematics, Samara National Research
University, Moskovskoye shosse 34, 443086, Samara, Russia}
\email{astash56@mail.ru}
\author{Per G. Nilsson}
\address{Nilsson: Roslagsgatan 6, 113 55 Stockholm, Sweden}
\email{pgn@plntx.com}
\date{\today }

\begin{abstract}
The main result of this paper establishes that the known Arazy-Cwikel
property holds for classes of uniformly $K$-monotone spaces in the
quasi-Banach setting provided that the initial couple is mutually closed. As
a consequence, we get that the class of all quasi-Banach $K$-spaces (i.e.,
interpolation spaces which are described by the real $K$-method) with
respect to an arbitrary mutually closed Banach couple enjoys the
Arazy-Cwikel property. Another consequence complements some previous results
by Bykov and Ovchinnikov, showing that this property holds also for the
class of all interpolation quasi-Banach spaces with respect to a
quasi-Banach couple whenever all the couples involved have the uniform Calder\'{o}n-Mityagin property. We apply these results to some classical families of
spaces.
\end{abstract}

\maketitle

\section{Introduction}

\label{Intro}

According to the classical result of the interpolation theory of operators,
which was obtained independently by Calder\'{o}n \cite{CAL} and Mityagin \cite%
{Mit}, a Banach function space $X$ on an arbitrary underlying measure space
is an interpolation space with respect to the couple $\left( L^{1},L^{\infty
}\right) $ on that measure space if and only if the following monotonicity
property\footnote{%
Here we are using Calder\'{o}n's terminology. Mityagin formulates this result
somewhat differently.} holds: if $f\in X$, $g\in L^{1}+L^{\infty }$ and 
\begin{equation*}
\int\nolimits_{0}^{t}g^{\ast }\left( s\right) \,ds\leq
\int\nolimits_{0}^{t}f^{\ast }\left( s\right) \,ds,\;\;t>0
\end{equation*}%
(where $h^{\ast }$ denotes the nonincreasing left-continuous rearrangement
of $|h|$), then $g\in X$ and $\left\Vert g\right\Vert _{X}\leq \left\Vert
f\right\Vert _{X}$ (for all undefined terminology see the next section).
Since Peetre \cite{PeetreJ1963Nouv,PeetreJ1968brasilia} had proved (cf.~also
a similar result due independently to Oklander \cite{OklanderE1964}, and cf.
also \cite[ pp.~158--159]{KreeP1968}) that the functional $t\mapsto
\int\nolimits_{0}^{t}f^{\ast }\left( s\right) \,ds$ is in fact the $K$%
-functional of the function $f\in L^{1}+L^{\infty }$ for the couple $\left(
L^{1},L^{\infty }\right) $, the results of \cite{CAL,Mit} naturally led to
the introduction of the following definition. An intermediate Banach space $%
X $ with respect to a Banach couple $\overline{X}=\left( X_{0},X_{1}\right) $
is said to be \textit{{$K$}-monotone} with respect to $\overline{X}$ if
whenever elements $x\in X$ and $y\in \Sigma \left( \overline{X}\right) $
satisfy 
\begin{equation*}
{K}\left( t,y;\overline{X}\right) \leq {K}\left( t,x;\overline{X}\right) ,\;%
\text{for all\thinspace }\;t>0,
\end{equation*}%
it follows that $y\in X$. Moreover, if each interpolation Banach space with
respect to a Banach couple $\overline{X}$ is $K$-monotone, $\overline{X}$ is
said to have the Calder\'{o}n-Mityagin property. As is well known now this
property is shared by many Banach couples and, in particular, by each couple 
$\left( L^{p},L^{q}\right) $, $1\leq p<q\leq \infty $ (see e.g. \cite{SP78}
or \cite{Cwikel1}).

\vskip0.2cm

Using the Calder\'{o}n-Mityagin property of couples of $L^{p}$-spaces, Arazy and
Cwikel proved, in \cite{ArCw84}, that for all $1\leq p<q\leq \infty $ and
for each underlying measure space a Banach function space $X$ is an
interpolation space with respect to the couple $\left( L^{p},L^{q}\right) $
if and only if $X$ is such a space with respect to each of the couples $%
\left( L^{1},L^{q}\right) $ and $\left( L^{p},L^{\infty }\right) $, or more
formally 
\begin{equation}
Int\left( L^{p},L^{q}\right) =Int\left( L^{1},L^{q}\right) \cap Int\left(
L^{p},L^{\infty }\right)   \label{AC-statement-1}
\end{equation}%
($Int\left( X_{0},X_{1}\right) $ denotes the class of all interpolation
Banach spaces with respect to a Banach couple $\overline{X}=(X_{0},X_{1})$).
This result somewhat resembles the well-known interpolation Boyd theorem
together with its one-sided refinements. Indeed, the latter theorem reads
that, for $1\leq p<q\leq \infty $, the condition $1/q<\alpha \left( X\right)
\leq \beta \left( X\right) <1/p$ ($\alpha \left( X\right) $ and $\beta
\left( X\right) $ are the Boyd indices of $X$) ensures that a rearrangement
invariant space $X$ belongs to the set $Int\left( L^{p},L^{q}\right) $ \cite[%
Theorem~2.b.11]{LT79-II}, while in the extreme case when $1=p<q<\infty $
(resp. $1<p<q=\infty $) the same result holds for each $X$ such that $X\in
Int\left( L^{1},L^{\infty }\right) $ provided that only the one-sided
estimate $\alpha \left( X\right) >1/q$ (resp. $\beta \left( X\right) <1/p$)
is valid (see \cite{Mal81} and \cite{AM04}).

\vskip0.2cm

Later on, Bykov and Ovchinnikov obtained a result similar to %
\eqref{AC-statement-1} for families of interpolation spaces, corresponding
to weighted couples of shift-invariant ideal sequence spaces \cite{BO06}.
Moreover, let $X_{0}$ and $X_{1}$ be classical Lions-Peetre $K$-spaces with
respect to a Banach couple $(A_{0},A_{1})$, i.e., $X_{0}=(A_{0},A_{1})_{%
\alpha _{0},p_{0}}$, $X_{1}=(A_{0},A_{1})_{\alpha _{1},p_{1}}$, where $%
0<\alpha _{i}<1$, $1\leq p_{i}\leq \infty $, $i=0,1$. Then, from \cite[%
Theorem~2.9]{BO06}, in particular, it follows that for all $0<\theta <\eta
<1 $ and $0<p,q\leq \infty $ 
\begin{equation}
Int\left( \overline{X}_{\theta ,p},\overline{X}_{\eta ,q}\right) =Int\left(
X_{0},\overline{X}_{\eta ,q}\right) \cap Int\left( \overline{X}_{\theta
,p},X_{1}\right) .  \label{AC-statement-2}
\end{equation}

\vskip0.2cm

Recently some results of Arazy-Cwikel type were proved also in the
quasi-Banach setting. So, relation \eqref{AC-statement-1} has been extended
to the range $0\leq p<q\leq \infty $ and the classes of all interpolation
quasi-Banach spaces with respect to the couples of $L^{p}$-spaces of
measurable functions on the semi-axis $(0,\infty)$ with the Lebesgue measure \cite{CSZ} and the couples of sequence $l^{p}$-spaces \cite{AsCwNi21} (the definition of the extreme spaces $L^0$ and $l^0$ see also in \cite{CSZ} and \cite{AsCwNi21}).

\vskip0.2cm

On the other hand, already in \cite{ArCw84}, it was indicated that formula %
\eqref{AC-statement-2} fails to be valid for \textit{all} Banach couples $%
(X_{0},X_{1})$. In contrast to that, the main result of this paper
establishes that the Arazy-Cwikel property holds if we consider classes of 
\textit{uniformly $K$-monotone} (rather than of all interpolation) spaces
even in the quasi-Banach setting provided that the initial couple is
mutually closed. Since in the Banach case classes of $K$-monotone and $K$
spaces coincide \cite[Theorem~4.1.11]{BK91}, we immediately obtain that the class of all quasi-Banach $K$-spaces (i.e.,
interpolation spaces which are described by the real $K$-method) with
respect to an arbitrary mutually closed Banach couple enjoys the
Arazy-Cwikel property. Observe that the latter result has been announced, without a proof, in \cite{CwNi84} (see also \cite[p. 672]{BK91}). 

\vskip0.2cm

By the well-known theorem due to Cwikel \cite[Theorem~1]{Cwikel1}, for every
Banach couple $\overline{X}=(X_{0},X_{1})$ and all $0<\theta _{0},\theta
_{1}<1$, $1\leq p_{0},p_{1}\leq \infty $, the couple $\left( \overline{X}%
_{\theta _{0},p_{0}},\overline{X}_{\theta _{1},p_{1}}\right) $ is uniformly $%
K$-monotone. Hence, the above-mentioned result of \cite{BO06} suggests that
equality \eqref{AC-statement-2} holds for any uniformly $K$-monotone Banach
couple. Indeed, this statement is another consequence of our main theorem.
Moreover, we show that \eqref{AC-statement-2} is valid for an arbitrary
quasi-Banach couple $\overline{X}=(X_{0},X_{1})$ and all $0<\theta
_{0},\theta _{1}<1$, $0<p_{0},p_{1}\leq \infty $ whenever all the couples
involved in \eqref{AC-statement-2} have the uniform Calder\'{o}n-Mityagin
property.

%An essential ingredient of our approach to studying the Arazy-Cwikel
%property is the concept of an \textit{admissible collection having $K-additive orbits}, which has been suggested by some results and proofs from the paper \cite{BO06}.

\vskip0.2cm

Let us describe briefly the content of the paper. In Section \ref{Prel}, we
give preliminaries with basic definitions and notation. We address some
properties of quasi-Banach spaces and lattices, which relate to their
convexity, and necessary definitions and results from interpolation theory.
In the next section we collect some auxiliary results, many of which are
apparently to some extent known. Section \ref{Arazy-Cwikel} contains the main results of the paper (Theorem \ref{Ar-Cw1} and Corollaries
\protect\ref{Ar-Cw2-cor} and \protect\ref{Ar-Cw1-cor}) related to the
Arazy-Cwikel property for the Lions-Peetre $K$-spaces $X_{\theta ,p}$, $0<\theta <1$, $0<p\leq \infty $, in the quasi-Banach setting. In
conclusion, in Section \ref{ex1}, we apply these results to some classical families of spaces.

\vskip0.2cm

The authors would like to thank Professor Michael Cwikel, whose work and insight are behind the results announced in the paper \cite{CwNi84}.

\vskip0.3cm

\section{Preliminaries}

\label{Prel}

\subsection{Quasi-Banach spaces and lattices.}

\label{Prel4}

Recall that a (real) quasi-Banach space $X$ is a complete real vector space
whose topology is given by a quasi-norm $x\mapsto \|x\|$ which satisfy the
conditions: $\|x\|>0$ if $x\ne 0$, $\|\alpha x\|=|\alpha|\|x\|$, $\alpha\in%
\mathbb{R}$, $x\in X$, and $\|x_1+x_2\|_X\le C(\|x_1\|+\|x_2\|)$ for some $%
C>0$ and all $x_1,x_2\in X$.

\vskip0.2cm

If a quasi-Banach space $X$ is additionally a vector lattice such that $%
\Vert x\Vert \leq \Vert y\Vert $ whenever $|x|\leq |y|$, we say that $X$ is
a quasi-Banach lattice (see \cite{LT79-II}, \cite{MeyNie91}.)

\vskip0.2cm

Let $X$ be a quasi-Banach lattice, $x_{k}\in X$, $k=1,2,\dots ,n$. Then, any
element of the form $(\sum_{k=1}^{n}|x_{k}|^{p})^{1/p}$, $0<p<\infty $, can
be defined by means of a "homogeneous functional calculus" in the
quasi-Banach setting exactly as in the case of Banach lattices (cf. \cite[%
pp.~40-41]{LT79-II}, \cite{Kal84}, \cite{CT-86}). A quasi-Banach lattice $X$
is said to be \textit{(lattice) $p$-convex}, $0<p\leq \infty $, if for some
constant $M$ and any $x_{k}\in X$, $k=1,2,\dots ,n$ we have 
\begin{equation}
\label{equ2}
\Big\|\Big(\sum_{k=1}^{n}|x_{k}|^{p}\Big)^{1/p}\Big\|_{X}\leq M\Big(%
\sum_{k=1}^{n}\Vert x_{k}\Vert _{X}^{p}\Big)^{1/p}\;\;\mbox{if}\;\;p<\infty ,
\end{equation}%
with the usual modification for $p=\infty $ (see e.g. \cite{LT79-II} for $%
p\geq 1$ and \cite{Kal84} for $p>0$). By $M^{\left( p\right) }\left(
X\right) $ we will denote the minimal value of $M$ satisfying \eqref{equ2}.

\vskip0.2cm

Next, we will use the so-called $p$-convexification procedure which just an
abstract description of the mapping $f\mapsto |f|^{p}\mathrm{sign}\,f$ from $%
L_{r}(\mu )$, $0<r<\infty $, into $L_{rp}(\mu )$. Note that in a general
lattice $X$ there is no meaning to the symbol $x^{p}$ that makes us to
introduce new algebraic operations in $X$ (see \cite[pp.~53-54]{LT79-II}).

\vskip0.2cm

Let $X$ be a quasi-Banach lattice with the algebraic operations denoted by $%
+ $ and $\cdot $ and let $p>0$. For every $x,y\in X$ and $\alpha \in \mathbb{%
R} $ we define 
\begin{equation*}
x\oplus y:=(x^{1/p}+y^{1/p})^{p}\;\;\mbox{and}\;\;\alpha \odot x:=\alpha
^{p}\cdot x,
\end{equation*}%
where $\alpha ^{p}$ is $|\alpha |^{p}\mathrm{sign}\,\alpha $. Then, the set $%
X$, endowed with the operations $\oplus $, $\odot $ and the same order as in 
$X$ is a vector lattice, which denoted by $X^{(p)}$. Moreover, $%
|||x|||_{X^{(p)}}:=\Vert x\Vert _{X}^{1/p}$ is a lattice norm on $X^{(p)}$
and $(X^{(p)},|||\cdot|||_{X^{(p)}})$ is a quasi-Banach lattice for every $%
0<p<\infty$ \cite[Proposition~1.2]{CT-86}. One can easily check also that if 
$X$ is $r$-convex with the constant $M^{(r)}$, then $X^{(p)}$ is $pr$-convex
with the same constant.

\vskip0.2cm

Note that if $X$ is a quasi-Banach function lattice defined on some measure
space, $X^{(p)}$ can be identified with the space of functions $f$ such that 
$f^{p}:=|f|^{p}\mathrm{sign}\,f\in X$ equipped with the norm $|||f|||=\Vert
\,|f|^{p}\,\Vert ^{1/p}$.

\vskip0.2cm

\subsection{Interpolation of quasi-Banach spaces}

\label{Prel1}

Let us recall some basic constructions and definitions related to the
interpolation theory of operators. For more detailed information we refer to 
\cite{BSh,BL76,BK91,KPS82,Ovc84}.

\smallskip {} \vskip0.2cm

In this paper we are mainly concerned with interpolation within the class of
quasi-Banach spaces, while the linear bounded operators are considered as
the corresponding morphisms. A pair $\overline{X}=(X_{0},X_{1})$ of
quasi-Banach spaces is called a \textit{\ quasi-Banach couple} if $X_{0}$
and $X_{1}$ are both linearly and continuously embedded in some Hausdorff
topological vector space.

\vskip0.2cm

For each quasi-Banach couple $\overline{X}=(X_{0},X_{1})$ we define the 
\textit{\ intersection} $\,\Delta \left( \overline{X}\right) =X_{0}\cap
X_{1} $ and the \textit{sum} $\Sigma \left( \overline{X}\right) =X_{0}+X_{1}$
as the quasi-Banach spaces equipped with the quasi-norms 
\begin{equation*}
\Vert x\Vert _{\Delta \left( \overline{X}\right) }:=\max \left\{ \Vert
x\Vert _{X_{0}}\,,\,\Vert x\Vert _{X_{1}}\right\}
\end{equation*}%
and 
\begin{equation*}
\Vert x\Vert _{\Sigma \left( \overline{X}\right) }:=\inf \left\{ \Vert
x_{0}\Vert _{X_{0}}\,+\,\Vert x_{1}\Vert _{X_{1}}:\,x=x_{0}+x_{1},x_{i}\in
X_{i},i=0,1\right\} ,
\end{equation*}%
respectively.

\vskip0.2cm

A quasi-Banach space $X$ is called an \textit{intermediate space} for a
quasi-Banach couple $\overline{X}=(X_{0},X_{1})$ if the continuous
inclusions $\Delta \left( \overline{X}\right) \subset X\subset \Sigma \left( 
\overline{X}\right) $ hold. The set of intermediate spaces with respect to $%
\overline{X}$ will be denoted by $I\left(\overline{X}\right) $ or $I\left(
X_{0},X_{1}\right) $.

\vskip0.2cm

If $\overline{X}=(X_{0},X_{1})$ is a quasi-Banach couple, then we let $%
\mathfrak{L}\left( \overline{X}\right) $ (or $\mathfrak{L}(X_{0},X_{1})$)
denote the space of all linear operators $T:\,\Sigma \left( \overline{X}%
\right) \rightarrow \Sigma \left( \overline{X}\right) $ that are bounded on $%
X_{i}$, $i=0,1$, equipped with the quasi-norm 
\begin{equation*}
{\Vert T\Vert }_{\mathfrak{L}\left( \overline{X}\right)
}:=\max\limits_{i=0,1}{\ \Vert T\Vert }_{X_{i}\rightarrow X_{i}}.
\end{equation*}

\vskip0.2cm

Let $\overline{X}=(X_{0},X_{1})$ be a quasi-Banach couple and let $X\in
I\left( \overline{X}\right) $. Then, $X$ is said to be an \textit{\
interpolation space} with respect to the couple $\overline{X},$ if every
operator $T{\in \mathfrak{L}\left( \overline{X}\right) }$ is bounded on $X$.
Recall that, by the Aoki-Rolewicz theorem (see e.g. \cite[Lemma~3.10.1]{BL76}%
), every quasi-Banach space is a $F$-space (i.e., the topology in that space
is generated by a complete invariant metric). In particular, this applies to
the space ${\mathfrak{L}}({\overline{X}})$ which is obviously a quasi-Banach
space with respect to the quasi-norm $T\mapsto {\Vert T\Vert }_{\mathfrak{L}%
\left( \overline{X}\right)}$ and also with respect to the quasi-norm $%
T\mapsto \max({\Vert T\Vert }_{\mathfrak{L}\left( \overline{X}\right)},{%
\Vert T\Vert }_{X\to X})$ whenever the quasi-Banach space $X$ is an
interpolation space with respect to the quasi-Banach couple $\overline{X}%
=(X_0,X_1)$. As is well known (see e.g. \cite[Theorem~2.2.15]{Rud}), the
Closed Graph Theorem and the equivalent Bounded Inverse Theorem (see e.g. 
\cite[Corollary 2.2.12]{Rud}) hold for $F$-spaces. Therefore, exactly the same reasoning as required for the Banach case (see Theorem 2.4.2 of \cite[p.~28]{BL76}) shows that, if $X$ is an interpolation quasi-Banach space with respect to a quasi-Banach couple ${\overline{X}}=(X_{0},X_{1})$, then there exists a constant $C>0$ such that for every $T\in {\mathfrak{L}}({\overline{X}})$ we
have ${\Vert T\Vert }_{X\rightarrow X}\leq C{\Vert T\Vert }_{{\mathfrak{L}}({\overline{X}})}.$ The least constant $C$, satisfying the last inequality for
all such $T$, is called the \textit{interpolation constant} of $X$ with
respect to the couple $\overline{X}$. The collection of all interpolation
spaces with respect to the couple $\overline{X}$ will be denoted by $%
Int\left( \overline{X}\right) $ (or $Int\left( X_{0},X_{1}\right) )$.

\vskip0.2cm

One of the most important ways of constructing interpolation spaces is based
on use of the \textit{Peetre {$K$}-functional}, which is defined for an
arbitrary quasi-Banach couple $\overline{X}=(X_{0},X_{1})$, for every $x\in
\Sigma \left( \overline{X}\right) $ and each $t>0$ as follows: 
\begin{equation}
{K}(t,x;\overline{X}):=\inf
\{||x_{0}||_{X_{0}}+t||x_{1}||_{X_{1}}:\,x=x_{0}+x_{1},x_{i}\in {X_{i},i=0,1}%
\}.
\end{equation}%
For each fixed $x\in \Sigma \left( \overline{X}\right) $ one can easily show
that the function $t\mapsto ${$K$}$(t,x;\overline{X})$ is continuous,
non-decreasing, concave and non-negative on $\left( 0,\infty \right) $ \cite[%
Lemma~3.1.1]{BL76}.

\vskip0.2cm

Let $X$ be an intermediate quasi-Banach space with respect to a quasi-Banach
couple $\overline{X}=\left( X_{0},X_{1}\right) $. Then, $X$ is said to be a 
\textit{{$K$}-monotone space} with respect to the couple $\overline{X}$ if
whenever elements $x\in X$ and $y\in \Sigma \left( \overline{X}\right) $
satisfy 
\begin{equation*}
{K}\left( t,y;\overline{X}\right) \leq {K}\left( t,x;\overline{X}\right) ,\;%
\text{for all\thinspace }\;t>0,
\end{equation*}%
it follows that $y\in X$. If additionally $\left\Vert y\right\Vert _{X}\leq
C\left\Vert x\right\Vert _{X}$, for a constant $C$ which does not depend on $%
x$ and $y$, then we say that $X$ is a \textit{uniformly }$K-$\textit{monotone%
} space with respect to the couple $\overline{X}$. The infimum of all
constants $C$ with this property is referred as the \textit{{$K$}%
-monotonicity constant} of $X$. Clearly, each {$K$}-monotone space with
respect to the couple $\overline{X}$ is an interpolation space with respect
to this couple. As was already defined in Introduction, a couple $\overline{X%
}$\ has the Calder\'{o}n-Mityagin property whenever every interpolation
quasi-Banach space with respect to $\overline{X}$\ is $K$-monotone. The
collection of all uniformly $K-$monotone spaces with respect to the couple $%
\overline{X}$ will be denoted by $Int^{KM}\left( \overline{X}\right) $ (or $%
Int^{KM}\left( X_{0},X_{1}\right) $).

\vskip0.2cm

Suppose $E$ is a quasi-Banach function lattice on $(0,\infty )$ (with
respect to the usual Lebesgue measure and a.e. order) and $w:\,(0,\infty)\to%
\mathbb{R}$ is a nonnegative measurable function. Then, $E(w)$ is the
weighted quasi-Banach function lattice with the norm $\Vert x\Vert
_{E(w)}:=\Vert xw\Vert _{E}$. In particular, in what follows, by $\overline{%
L^{\infty }}$ we denote the couple $\left( L^{\infty},L^{\infty}(1/t)\right) 
$.

\vskip0.2cm

In particular, if $E=L^{p}\left( t^{-\theta },\frac{dt}{t}\right) $ (i.e., the weighted space $L^{p}\left( t^{-\theta }\right) $ on $(0,\infty )$ equipped with the measure $\frac{dt}{t}$), where $0<\theta <1,0<p\leq \infty$, we get the classical Lions-Peetre $K$-spaces $\overline{X}_{\theta ,p}$ endowed with the quasi-norms 
\begin{equation*}
\Vert x\Vert _{\overline{X}_{\theta ,p}}:=\left( \int_{0}^{\infty }\left(
K\left( t,x;\overline{X}\right) t^{-\theta }\right) ^{p}\,\frac{dt}{t}%
\right) ^{1/p}
\end{equation*}%
(with usual modification if $p=\infty $), see \cite{BL76}.

\vskip0.2cm

One of the most important properties of the real $K$-method is the following
reiteration theorem due to Brudnyi-Kruglyak (see e.g. \cite[Theorem 3.3.24]%
{BK91}). If $E_{0},E_{1}$ are quasi-Banach function lattices such that $%
E_{0},E_{1}\in Int\left( \overline{L^{\infty }}\right) $, then for every
quasi-Banach couple $\overline{X}=\left( X_{0},X_{1}\right) $ we have 
\begin{equation}  \label{reiteration1}
K\left( t,x;\overline{X}_{E_0:K},\overline{X}_{E_1:K}\right) \cong
K\left(t,K\left( \cdot ,x;X_{0},X_{1}\right) ;E_{0},E_1\right),\;\;t>0,
\end{equation}
with some constants independent of $x\in \overline{X}_{E_0:K}+\overline{X}%
_{E_1:K}$. This implies that for every quasi-Banach function lattice $F$ on $%
(0,\infty )$ 
\begin{equation}  \label{reiteration}
\left(\overline{X}_{E_0:K},\overline{X}_{E_1:K}\right)_{F:K}=\overline{X}%
_{E:K},
\end{equation}
where $E:=\left(E_0,E_1\right)_{F:K}$.

\vskip0.2cm

If $\overline{X}=(X_{0},X_{1})$ is a quasi-Banach couple and $X$ is an
intermediate space with respect to $\overline{X}$, then the \textit{relative
closure} $X^{c}$ of $X$ consists of all $x\in \Sigma \left( \overline{X}%
\right) $ for which there exists a bounded sequence $\left\{ x_{n}\right\}
\subset X,$ converging to $x$ in $\Sigma \left( \overline{X}\right) .$ The
norm in $X^{c}$ is taken as the infimum of all bounds of such sequences in $%
X $. Note that $K\left( t,x;\overline{X}\right) =K\left( t,x;\overline{X^{c}}%
\right) $ for all $x\in \Sigma \left( \overline{X}\right) $ and $t>0$, where 
$\overline{X^{c}}=\left( X_{0}^{c},X_{1}^{c}\right) $ (see \cite[Lemma 2]%
{Cwikel0}). We will say that a space $X\in I\left( \overline{X}\right) $ is 
\textit{relatively closed} in $\Sigma \left( \overline{X}\right) $ whenever $%
X=X^{c}$ with equivalence of norms\footnote{%
This differs from the usual definition saying that a space $X$ is relatively
closed in $\Sigma \left( \overline{X}\right) $ if $X^{c}=X$ isometrically
(see, for instance, \cite[Definition~I.1.4]{KPS82} or \cite[Definition~2.2.16%
]{BK91}).}. A quasi-Banach couple is said to be \textit{mutually closed} (or 
\textit{Gagliardo couple}) if both spaces $X_{0}$ and $X_{1}$ are relatively
closed in $\Sigma \left( \overline{X}\right) $.

\vskip0.2cm

Clearly, for every quasi-Banach couple $\overline{X}=(X_{0},X_{1})$ we have $%
X_{0}\subset \overline{X}_{L^{\infty }:K}$ and $X_{1}\subset \overline{X}%
_{L^{\infty }\left( 1/t\right) :K}$ with constant $1$. Moreover, a couple $%
\overline{X}=(X_{0},X_{1})$ is mutually closed if and only if the opposite
embeddings $\overline{X}_{L^{\infty }:K}\subset X_{0}$ and $\overline{X}%
_{L^{\infty }\left( 1/t\right) :K}\subset X_{1}$ hold, i.e., 
\begin{equation*}
\Vert x\Vert _{X_{0}}\cong \sup_{t>0}K(t,x;X_{0},X_{1})\;\;\mbox{and}%
\;\;\Vert x\Vert _{X_{1}}\cong \sup_{t>0}\frac{1}{t}K(t,x;X_{0},X_{1})
\end{equation*}%
with constants independent of $x\in (X_{0},X_{1})_{L^{\infty }:K}$ and $x\in
(X_{0},X_{1})_{L^{\infty }(1/t):K}$, respectively (see, for instance, \cite[%
Lemma 2.2.21, p.123]{BK91} or \cite[p.~384]{Ovc84}).

\vskip0.2cm

Let $\overline{X}=\left(X_{0},X_{1}\right)$ be a quasi-Banach couple and let 
$x\in X_{0}+X_{1}$, $x\neq0$. The \textit{orbit} ${\mathrm{Orb}}(x;\overline{%
X})$ of $x$ with respect to the class of operators $\mathfrak{L}(\overline{X}%
)$ is the linear space $\left\{ Tx:\,T\in\mathfrak{L}(\overline{X})\right\}$%
, which is equipped with the quasi-norm defined by 
\begin{equation*}
\left\Vert y\right\Vert _{\mathrm{Orb}\left(x\right)}:=\inf\left\{
\left\Vert T\right\Vert _{\mathfrak{L}(\overline{X})}:\;y=Tx,T\in \mathfrak{L%
}(\overline{X})\right\}.
\end{equation*}

Since any orbit ${\mathrm{Orb}}(x;\overline{X})$ can be regarded as a
quotient of the quasi-Banach space $\mathfrak{L}( \overline{X})$, it is a
quasi-Banach space itself. If for every nonzero $x\in X_{0}+X_{1}$ there
exists a linear functional $x^{\ast }\in \left( X_{0}+X_{1}\right) ^{\ast }$
with $\left\langle x,x^{\ast }\right\rangle \neq 0$ then $X_{0}\cap X_{1}$
is contained in $Orb\left( x;X_{0},X_{1}\right) $ continuously (see e.g. 
\cite[Section 1.6, p. 368]{Ovc84}). It is easy to see that then, moreover,
each orbit $Orb\left( x;\overline{X}\right) $ is an interpolation space
between $X_{0}$ and $X_{1}$.

\vskip0.2cm

A similar notion may be defined also by using the $K$-functional. If $%
\overline{X}=\left( X_{0},X_{1}\right) $ is a quasi-Banach couple, then the 
\textit{$K$}$\mathit{-orbit}$ ${{\mathrm{Orb}}}^{K}\left( x;\overline{X}%
\right) $ of an element $x\in \Sigma \left( \overline{X}\right) $, $x\neq 0$%
, is the space of all $y\in \Sigma \left( \overline{X}\right) $ such that
the quasi-norm 
\begin{equation*}
\left\Vert y\right\Vert _{{{\mathrm{Orb}}}^{K}(x)}:=\sup_{t>0}\frac{{K}%
\left( t,y;\overline{X}\right) }{{K}\left( t,x;\overline{X}\right) }
\end{equation*}%
is finite.

\bigskip \vskip0.2cm

It is obvious that for every quasi-Banach couple $\left( X_{0},X_{1}\right) $
and each $x\in X_{0}+X_{1}$ we have the continuous embedding ${\mathrm{Orb}}%
(x;\overline{X}){\subset }{{\mathrm{Orb}}}^{K}\left( x;\overline{X}\right) $
with constant $1$. A quasi-Banach couple $\overline{X}=\left(
X_{0},X_{1}\right) $ has the Calder\'{o}n-Mityagin property if and only if the
opposite embedding ${\mathrm{Orb}}^{K}(x;\overline{X}){\subset }{{\mathrm{Orb%
}}}\left( x;\overline{X}\right) $ holds for each $x\in X_{0}+X_{1}$, i.e.,
if for every $y\in {\mathrm{Orb}}^{K}(x;\overline{X})$ there exists an
operator $T\in \mathfrak{L}(\overline{X})$ such that $y=Tx$. Moreover, $%
\overline{X}=\left( X_{0},X_{1}\right) $ has the uniform Calder\'{o}n-Mityagin
property if and only if additionally we can choose $T\in \mathfrak{L}(%
\overline{X})$ so that $\Vert T\Vert _{\mathfrak{L}(\overline{X})}\leq
C\left\Vert y\right\Vert _{{\ {\mathrm{Orb}^{K}}}(x)}$, where $C$ is
independent of $x$ and $y$.

\vskip0.2cm

\subsection{Quasi-Banach lattice couples}

\label{Prel2}

Suppose $X_{0}$ and $X_{1}$ are two quasi-Banach lattices. We will say that $%
\overline{X}=(X_{0},X_{1})$ is a \textit{quasi-Banach lattice couple} if
there exists a Hausdorff topological vector lattice $\mathcal{H}$ such that
both $X_{0}$ and $X_{1}$ are embedded into $\mathcal{H}$ via a continuous,
interval preserving, lattice homeomorphism (see e.g. \cite{RaTr16}). Then,
one can easily check that the intersection $\Delta \left( \overline{X}%
\right) $ and the sum $\Sigma \left( \overline{X}\right) $ are quasi-Banach
lattices.

\vskip0.2cm

Suppose that a quasi-Banach lattice $X$ is intermediate with respect to a
quasi-Banach lattice couple $\overline{X}=(X_{0},X_{1})$ as a quasi-Banach
space. We will say that $X$ is an \textit{intermediate quasi-Banach lattice}
with respect to $\overline{X}$ if the canonical embeddings $I_{\Delta
}:\,\Delta \left( \overline{X}\right) \rightarrow X$ and $I_{\Sigma
}:\,X\rightarrow \Sigma \left( \overline{X}\right) $ are continuous,
interval preserving, lattice homeomorphisms (see \cite{BK91} and \cite%
{RaTr16}). \textbf{\ }By $I\left( \overline{X}\right) $ we will denote in
this case the set of all intermediate quasi-Banach lattices with respect to $%
\overline{X}$\textbf{.} 

\vskip0.2cm

Observe that every $K$-monotone quasi-Banach lattice with respect to a
quasi-Banach lattice couple is, in fact, uniform $K$-monotone. This can be
proved by a simple modification of the arguments for the Banach lattice case
used in \cite[Theorem 6.1]{CwNi03}.

\vskip0.2cm

We will say that a quasi-Banach lattice couple $\overline{X}=(X_{0},X_{1})$
is \textit{$p$-convex} if both spaces $X_{0}$ and $X_{1}$ are $p$-convex.

\vskip0.2cm

\subsection{The cone $Conv$ and $K$-functionals.}

\label{Prel3}

Let $Conv$ denote the cone of all continuous, concave, non-negative
functions defined on the half-line $(0,\infty )$. Each $f\in Conv$ is a
nondecreasing function such that the function $t\mapsto f\left( t\right) /t$
is nonincreasing. Consequently, $f\left( t\right) \leq \max \left(
1,t/s\right) f\left( s\right) $, $0<s,t<\infty$, and thus $Conv$ is a subset
of the space $\Sigma \left( \overline{L^{\infty }}\right)=L^{\infty }\left(
\min(1,1/s)\right)$ and for any $f\in Conv$ we have $\left\Vert f\right\Vert
_{\Sigma \left( \overline{L^{\infty }}\right) }=f\left( 1\right).$

\vskip0.2cm

Note that, for every quasi-Banach couple $\overline{X}$ and any $x\in \Sigma
\left( \overline{X}\right) $, the function $t\mapsto K\left( t,x;\overline{X}%
\right) $ belongs to the cone $Conv$. Moreover, for each function $h\in
\Sigma (\overline{L^{\infty }})$ the function $t\mapsto K\left( t,h;%
\overline{L^{\infty }}\right) $ is the least concave majorant of $|h|$ on $%
(0,\infty )$ \cite{Dm-74} (see also \cite[Proposition 3.1.17]{BK91}). Given
a set $\mathcal{U}\subseteq Conv$, a quasi-Banach couple $\overline{X}$ is
called $\mathcal{U}$-\textit{abundant} if for every $f\in \mathcal{U}$ there
exists $x\in \Sigma \left( \overline{X}\right) $ such that 
\begin{equation*}
K\left( t,x;\overline{X}\right) \cong f(t),\;\;t>0,
\end{equation*}%
with equivalence constants independent of $f$ (see \cite[Definition~4.4.8]%
{BK91})\footnote{%
This property is referred sometimes as the $K$-surjectivity of a couple, see
e.g. \cite[p. 217]{Nil83}.}. In particular, if $\mathcal{U}=\{K\left( \cdot
,x;\overline{Y}\right) ,x\in \Sigma \left( \overline{Y}\right) \}$ for some
quasi-Banach couple $\overline{Y}$, we will say that $\overline{X}$ is $%
\overline{Y}$-\textit{abundant}.

\vskip0.2cm

One can check easily that the couple $\overline{L^{\infty }}$ is $Conv$%
-abundant. Therefore, a quasi-Banach couple $\overline{X}$ is $\overline{%
L^{\infty }}$-abundant if and only if it is $Conv$-abundant.

\vskip0.2cm

The notation $A \preceq B$ means that there exists a positive constant $C$
with $A \leq C\cdot B$ for all applicable values of the arguments
(parameters) of the functions (expressions) $A$ and $B$. We will write $A
\cong B$ if $A \preceq B$ and $B \preceq A$.

\section{Auxiliary results}

\label{Aux}

\begin{lemma}
\label{renorming} Let $X$ be a $p$-convex quasi-Banach lattice, $p\in (0,1)$%
. Then the $1/p$-convexification $X^{\left( 1/p\right) }$ of $X$ has an
equivalent lattice norm and hence $X^{\left( 1/p\right) }$ is lattice
isomorphic to a Banach lattice.
\end{lemma}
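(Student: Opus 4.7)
The plan is to combine the functorial behavior of $p$-convexity under convexification (recorded in Section~\ref{Prel4}) with the classical Kantorovich-style construction of an equivalent lattice norm on a $1$-convex quasi-Banach lattice.

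First, I would invoke the observation from Section~\ref{Prel4} that if $X$ is $p$-convex with constant $M:=M^{(p)}(X)$, then its $(1/p)$-convexification $X^{(1/p)}$ is $\bigl((1/p)\cdot p\bigr)=1$-convex with the same constant. Unfolding the definitions, this is the inequality
\[
\Big|\Big|\Big|\bigoplus_{k=1}^{n}|x_{k}|\Big|\Big|\Big|_{X^{(1/p)}}\le M\sum_{k=1}^{n}|||x_{k}|||_{X^{(1/p)}},
\]
where $\bigoplus$ denotes the iterated $\oplus$-addition in the vector lattice $X^{(1/p)}$. Combining this with the general lattice inequality $|x\oplus y|\le|x|\oplus|y|$ (valid because $X^{(1/p)}$ is a vector lattice) and the monotonicity of $|||\cdot|||_{X^{(1/p)}}$, I would then deduce the $M$-quasi-triangle inequality
\[
|||x\oplus y|||_{X^{(1/p)}}\le M\bigl(|||x|||_{X^{(1/p)}}+|||y|||_{X^{(1/p)}}\bigr)
\]
for all $x,y\in X^{(1/p)}$.

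Second, I would define the candidate norm by the Kantorovich/Gagliardo-type formula
\[
\|x\|_{*}:=\inf\Big\{\sum_{k=1}^{n}|||x_{k}|||_{X^{(1/p)}}:\,|x|\le\bigoplus_{k=1}^{n}x_{k},\;x_{k}\ge0\text{ in }X^{(1/p)}\Big\}
\]
and check the lattice-norm axioms. Positive homogeneity with respect to $\odot$ is immediate from $|\alpha\odot x|=|\alpha|\odot|x|$; subadditivity $\|x\oplus y\|_{*}\le\|x\|_{*}+\|y\|_{*}$ follows by concatenating admissible decompositions of $|x|$ and $|y|$ after using $|x\oplus y|\le|x|\oplus|y|$; and monotonicity $\|x\|_{*}\le\|y\|_{*}$ whenever $|x|\le|y|$ is obvious since any admissible decomposition of $|y|$ is also one of $|x|$. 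Choosing the trivial one-term decomposition yields $\|x\|_{*}\le|||x|||_{X^{(1/p)}}$, while the $1$-convexity inequality applied to an arbitrary admissible decomposition produces the reverse estimate $|||x|||_{X^{(1/p)}}\le M\|x\|_{*}$. Therefore $\|\cdot\|_{*}$ is a lattice norm on $X^{(1/p)}$ equivalent to the original quasi-norm; completeness with respect to $\|\cdot\|_{*}$ follows automatically from this equivalence, so $(X^{(1/p)},\|\cdot\|_{*})$ is a Banach lattice, and the identity map realises the claimed lattice isomorphism.

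There is no deep obstacle: the argument is a direct transcription of the classical normability principle for $1$-convex lattices into the convexified setting. The only point requiring care is bookkeeping, namely keeping clear the distinction between the original operations of $X$ and the new operations $\oplus$, $\odot$ on $X^{(1/p)}$. Once the hypothesis of $p$-convexity of $X$ is re-expressed in terms of $X^{(1/p)}$, it takes precisely the shape of a triangle-like inequality, and it is this reformulation that makes the infimum defining $\|\cdot\|_{*}$ additive on the positive cone.
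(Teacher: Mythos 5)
Your proposal is correct and follows essentially the same route as the paper: reinterpret $p$-convexity of $X$ as $1$-convexity (a quasi-triangle inequality for $\oplus$) in $X^{(1/p)}$, and then introduce the Kantorovich/Gagliardo-type infimum over decompositions $|x|\le\bigoplus |x_k|$ as the equivalent lattice norm. The only tiny discrepancy is the constant: the $1$-convexity constant of $X^{(1/p)}$ is $(M^{(p)}(X))^{p}$, not $M^{(p)}(X)$ as you state (following the slightly imprecise wording in the preliminaries), but this has no bearing on the argument.
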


\begin{proof}
Let $M^{(p)}$ be the $p$-convexity constant of $X$. Hence, if $x_{i}\in
X^{\left( 1/p\right) }$, $i=1,2\dots ,n$, then from definition of the $(1/p)$%
-convexification $X^{\left( 1/p\right) }$ (see Section \ref{Prel4}) it
follows 
\begin{equation*}
\left\Vert \sum_{i=1}^{n}\oplus x_{i}\right\Vert _{X^{\left( 1/p\right)
}}\leq (M^{(p)})^p\sum_{i=1}^{n}\left\Vert x_{i}\right\Vert
_{X^{\left(1/p\right) }}.
\end{equation*}

Now, a straightforward inspection shows that the functional 
\begin{equation*}
\left\Vert \left\Vert x\right\Vert \right\Vert :=\inf \left\{
\sum_{i=1}^{n}\left\Vert x_{i}\right\Vert _{X^{\left( 1/p\right)
}}:\left\vert x\right\vert \leq \sum_{i=1}^{n}\oplus \left\vert
x_{i}\right\vert ,x_{i}\in X^{\left( 1/p\right) }\right\}
\end{equation*}%
is a lattice norm on the space $X^{\left( 1/p\right) }$, which is equivalent
to the original quasi-norm.
\end{proof}

\vskip0.1cm

Some variants of the next result in the Banach case are well known (see e.g. 
\cite[Proposition $2.g.6$]{LT79-II}).

\begin{proposition}
\label{sum is p-convex} Suppose $\overline{X}=\left( X_{0},X_{1}\right) $ is
a $p$-convex quasi-Banach lattice couple, $p>0$. Then $\Sigma \left( 
\overline{X}\right) $ is a $p$-convex lattice and 
\begin{equation*}
M^{\left( p\right) }\left( \Sigma \left( \overline{X}\right) \right) \leq
\max (2^{1-1/p},2^{2/p-2})\max_{i=0,1}M^{\left( p\right) }\left(
X_{i}\right) .
\end{equation*}
\end{proposition}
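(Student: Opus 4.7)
The plan is to bound $\bigl\|(\sum_{k=1}^n |y_k|^p)^{1/p}\bigr\|_{\Sigma(\overline{X})}$ for arbitrary $y_1,\dots,y_n \in \Sigma(\overline{X})$. Since the quasi-norm on the lattice $\Sigma(\overline{X})$ is solid and the expression only depends on $|y_k|$, I would first replace each $y_k$ by $|y_k|$ and work with $y_k \geq 0$. Fixing $\varepsilon > 0$, I would then choose \emph{positive} decompositions $y_k = y_k^0 + y_k^1$ with $y_k^i \in X_i$, $y_k^i \geq 0$, and $\|y_k^0\|_{X_0} + \|y_k^1\|_{X_1} \leq (1+\varepsilon)\|y_k\|_{\Sigma(\overline{X})}$; such positive decompositions are available by the lattice couple hypothesis.

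The next step is to establish, in the lattice $\Sigma(\overline{X})$ (using the homogeneous functional calculus from Section~\ref{Prel4}), the pointwise inequality
\[
u := \Bigl(\sum_{k=1}^n y_k^p\Bigr)^{1/p} \leq \lambda (u_0 + u_1), \qquad u_i := \Bigl(\sum_{k=1}^n (y_k^i)^p\Bigr)^{1/p} \in X_i,
\]
with $\lambda = 1$ if $p \geq 1$ and $\lambda = 2^{1/p - 1}$ if $0 < p \leq 1$. For $p \geq 1$ this is Minkowski's inequality in $\ell^p$ applied via Krivine's functional calculus; for $p \leq 1$ I would first apply pointwise subadditivity $(a+b)^p \leq a^p + b^p$ to get $u^p \leq u_0^p + u_1^p$, then the scalar inequality $(A+B)^{1/p} \leq 2^{1/p - 1}(A^{1/p} + B^{1/p})$. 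Since $u \geq 0$ and $\lambda u_i \in X_i$, the Riesz decomposition property (take $v_0 := u \wedge \lambda u_0$, $v_1 := u - v_0$) produces $u = v_0 + v_1$ with $0 \leq v_i \leq \lambda u_i$, so $v_i \in X_i$ and $\|v_i\|_{X_i} \leq \lambda \|u_i\|_{X_i}$.

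Invoking the $p$-convexity of each $X_i$ gives $\|u_i\|_{X_i} \leq M^{(p)}(X_i)\bigl(\sum_k \|y_k^i\|_{X_i}^p\bigr)^{1/p}$, so with $M := \max_{i=0,1} M^{(p)}(X_i)$,
\[
\|u\|_{\Sigma(\overline{X})} \leq \lambda M \Bigl[\Bigl(\sum_k \|y_k^0\|_{X_0}^p\Bigr)^{1/p} + \Bigl(\sum_k \|y_k^1\|_{X_1}^p\Bigr)^{1/p}\Bigr].
\]
The final step is a scalar estimate with $a_k := \|y_k^0\|_{X_0}$, $b_k := \|y_k^1\|_{X_1}$, $a_k + b_k \leq (1+\varepsilon)\|y_k\|_{\Sigma(\overline{X})}$. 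For $p \geq 1$ I would combine $A + B \leq 2^{1 - 1/p}(A^p + B^p)^{1/p}$ with $a_k^p + b_k^p \leq (a_k + b_k)^p$, extracting a factor $2^{1-1/p}$; for $p \leq 1$ I would combine $A + B \leq (A^p + B^p)^{1/p}$ with $a_k^p + b_k^p \leq 2^{1-p}(a_k + b_k)^p$, extracting a factor $2^{1/p - 1}$. Multiplying by $\lambda$ gives $2^{1 - 1/p}$ in the first case and $2^{2/p - 2}$ in the second. Sending $\varepsilon \downarrow 0$ yields the stated bound.

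The main obstacle is keeping the constants sharp in both regimes. A naive route that applies the quasi-triangle inequality of $\Sigma(\overline{X})$ or the convexity inequality to each $y_k$ separately, or that performs the decomposition into $u_0 + u_1$ via a pointwise estimate $(y_k^0 + y_k^1)^p \leq 2^{(p-1)_+}((y_k^0)^p + (y_k^1)^p)$, wastes a factor and misses the claimed bound; the point is to use the \emph{pointwise} $\ell^p$ Minkowski (or its $p < 1$ counterpart) on the sum over $k$ \emph{before} passing to norms, which absorbs one of the two powers of $2$.
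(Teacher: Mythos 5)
Your proof is correct and follows essentially the same route as the paper's (which spells out only the case $0<p\leq 1$ and declares $p>1$ similar): decompose each $y_k$, split $\bigl(\sum |y_k|^p\bigr)^{1/p}$ into $X_0$ and $X_1$ pieces via the pointwise $\ell^p$ inequality, apply $p$-convexity of $X_0,X_1$, and finish with the matching scalar estimate, collecting a factor of $2^{|1-1/p|}$ at each of the two passages. The only inessential extra is the Riesz decomposition step: since $u\leq\lambda(u_0+u_1)$ and $\Sigma(\overline{X})$ is a lattice, one has directly $\|u\|_{\Sigma(\overline{X})}\leq\lambda\|u_0+u_1\|_{\Sigma(\overline{X})}\leq\lambda(\|u_0\|_{X_0}+\|u_1\|_{X_1})$, so there is no need to manufacture a decomposition of $u$ itself.
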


\begin{proof}
We will assume that $0<p\leq 1$. The case when $p>1$ can be treated
similarly.

Let $x_{i}\in \Sigma \left( \overline{X}\right) $, $i=1,2,\dots ,n$, and let 
$x_{i}=x_{i}^{0}+x_{i}^{1}$ be an arbitrary representation, with $%
x_{i}^{0}\in X_{0},x_{i}^{1}\in X_{1}$. Then, we have 
\begin{eqnarray*}
\left( \sum_{i=1}^{n}\left\vert x_{i}\right\vert ^{p}\right) ^{1/p} &\leq
&\left( \sum_{i=1}^{n}\left( \left\vert x_{i}^{0}\right\vert ^{p}+\left\vert
x_{i}^{1}\right\vert ^{p}\right) \right) ^{1/p} \\
&\leq &2^{1/p-1}\left( \left( \sum_{i=1}^{n}\left\vert x_{i}^{0}\right\vert
^{p}\right) ^{1/p}+\left( \sum_{i=1}^{n}\left\vert x_{i}^{1}\right\vert ^{p}
\right) ^{1/p}\right) .
\end{eqnarray*}%
Therefore, 
\begin{eqnarray*}
\left\Vert \left( \sum_{i=1}^{n}\left\vert x_{i}\right\vert ^{p}\right)
^{1/p}\right\Vert _{\Sigma \left( \overline{X}\right) }^{p} &\leq
&2^{1-p}\left\Vert \left( \sum_{i=1}^{n}\left\vert x_{i}^{0}\right\vert
^{p}\right) ^{1/p}+\left( \sum_{i=1}^{n}\left\vert x_{i}^{1}\right\vert
^{p}\right) ^{1/p}\right\Vert _{\Sigma \left( \overline{X}\right) }^{p} \\
&\leq &2^{1-p}\left( \left\Vert \left( \sum_{i=1}^{n}\left\vert
x_{i}^{0}\right\vert ^{p}\right) ^{1/p}\right\Vert _{X_{0}}+\left\Vert
\left( \sum_{i=1}^{n}\left\vert x_{i}^{1}\right\vert ^{p}\right)
^{1/p}\right\Vert _{X_{1}}\right) ^{p} \\
&\leq &2^{1-p}\max_{i=0,1}M^{\left( p\right) }\left( X_{i}\right) ^{p}\left(
\left( \sum_{i=1}^{n}\left\Vert x_{i}^{0}\right\Vert _{X_{0}}^{p}\right)
^{1/p}+\left( \sum_{i=1}^{n}\left\Vert x_{i}^{1}\right\Vert
_{X_{1}}^{p}\right) ^{1/p}\right) ^{p} \\
&\leq &2^{1-p}\max_{i=0,1}M^{\left( p\right) }\left( X_{i}\right)
^{p}\sum_{i=1}^{n}\left( \left\Vert x_{i}^{0}\right\Vert
_{X_{0}}^{p}+\left\Vert x_{i}^{1}\right\Vert _{X_{1}}^{p}\right) \\
&\leq &2^{2-2p}\max_{i=0,1}M^{\left( p\right) }\left( X_{i}\right)
^{p}\left(\sum_{i=1}^{n}\left( \left\Vert
x_{i}^{0}\right\Vert_{X_{0}}+\left\Vert
x_{i}^{1}\right\Vert_{X_{1}}\right)\right)^{p}.
\end{eqnarray*}

Now passing in the right-hand side to the infimum over all admissible
representations $x_{i}=x_{i}^{0}+x_{i}^{1}$, $i=1,2,\dots ,n$, we obtain 
\begin{equation*}
\left\Vert \left( \sum_{i=1}^{n}\left\vert x_{i}\right\vert ^{p}\right)
^{1/p}\right\Vert _{\Sigma \left( \overline{X}\right) }^{p}\leq
2^{2-2p}\left( \max_{i=0,1}M^{\left( p\right) }\left( X_{i}\right) \right)
^{p}\sum_{i=1}^{n}\left\Vert x_{i}\right\Vert _{\Sigma \left( \overline{X}%
\right) }^{p},
\end{equation*}%
which completes the proof.
\end{proof}

The following simple fact is well-known in the case of Banach function
lattices (see e.g. \cite{Mal83}, \cite[Proposition 3.1.15]{BK91}, \cite%
{Mal89} and \cite[Lemma~1]{Mal13}).

\begin{lemma}
\label{Prop-K-functional}Let $\overline{X}$ be a quasi-Banach lattice
couple. Then for every $x\in \Sigma \left( \overline{X}\right) $ and all $%
t>0 $ we have 
\begin{eqnarray*}
K\left( t,x;\overline{X}\right) &=&K\left( t,|x|;\overline{X}\right) \\
&=&\inf \left\{ \left\Vert x_{0}\right\Vert _{X_{0}}+t\left\Vert
x_{1}\right\Vert _{X_{1}}:\left\vert x\right\vert =x_{0}+x_{1},0\leq
x_{0}\in X_{0},0\leq x_{1}\in X_{1}\right\} \\
&=&\inf \left\{ \left\Vert x_{0}\right\Vert _{X_{0}}+t\left\Vert
x_{1}\right\Vert _{X_{1}}:\left\vert x\right\vert \leq x_{0}+x_{1},0\leq
x_{0}\in X_{0},0\leq x_{1}\in X_{1}\right\} .
\end{eqnarray*}
\end{lemma}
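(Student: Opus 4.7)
The plan is to prove three equalities rather than a single one: writing $A := K(t,x;\overline{X})$, $B := K(t,|x|;\overline{X})$, $C := \inf\{\|u_0\|_{X_0}+t\|u_1\|_{X_1} : |x|=u_0+u_1,\ u_i\geq 0,\ u_i\in X_i\}$, and $D := \inf\{\|u_0\|_{X_0}+t\|u_1\|_{X_1} : |x|\leq u_0+u_1,\ u_i\geq 0,\ u_i\in X_i\}$, I would show $A=B=C=D$. The single ingredient is the Riesz decomposition property in the ambient Hausdorff topological vector lattice $\mathcal{H}$ (hence in $\Sigma(\overline{X})$): if $0\leq z\leq w_0+w_1$ with $w_0,w_1\geq 0$, then $z=z_0+z_1$ with $0\leq z_i\leq w_i$. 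Because the embeddings $X_i\hookrightarrow \Sigma(\overline{X})$ are interval-preserving, such decompositions of elements dominated by elements of $X_i$ produce components that actually lie in $X_i$ with the expected norm bound (using that $X_i$ is a quasi-Banach \emph{lattice}).

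First I would handle $C=D=B$. The inequalities $D\leq C$ and $B\leq C$ are immediate since the class over which $C$ is taken is strictly smaller. For $C\leq D$: given any positive majorization $|x|\leq u_0+u_1$, apply Riesz decomposition inside $\Sigma(\overline{X})$ to write $|x|=v_0+v_1$ with $0\leq v_i\leq u_i$; interval preservation gives $v_i\in X_i$, and the lattice property of $\|\cdot\|_{X_i}$ yields $\|v_i\|_{X_i}\leq\|u_i\|_{X_i}$, whence $C\leq \|u_0\|_{X_0}+t\|u_1\|_{X_1}$; taking the infimum proves $C\leq D$. For $C\leq B$: given an arbitrary decomposition $|x|=y_0+y_1$ in $X_0+X_1$, use $|x|\leq |y_0|+|y_1|$ and the previous step to reduce to the positive case while controlling the norm by $\|y_i\|_{X_i}=\||y_i|\|_{X_i}$.

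Next I would show $A=B$. The inequality $B\leq A$ is identical to the last reduction: any decomposition $x=x_0+x_1$ gives $|x|\leq|x_0|+|x_1|$, and then Riesz produces a positive decomposition of $|x|$ with better norms, so $B\leq D\leq \|x_0\|_{X_0}+t\|x_1\|_{X_1}$. For the reverse $A\leq C$, fix a positive decomposition $|x|=u_0+u_1$ and apply Riesz to $x^+\leq |x|=u_0+u_1$ to obtain $x^+=v_0+v_1$ with $0\leq v_i\leq u_i$; since $x^-=|x|-x^+=(u_0-v_0)+(u_1-v_1)$ with $u_i-v_i\geq 0$, the element $x_i:=2v_i-u_i=v_i-(u_i-v_i)$ satisfies $|x_i|\leq u_i$ (because $-u_i\leq 2v_i-u_i\leq u_i$), and by interval preservation $x_i\in X_i$. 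A direct computation gives $x_0+x_1=x^+-x^-=x$, and the lattice bound yields $\|x_i\|_{X_i}\leq\|u_i\|_{X_i}$; taking the infimum over such $u_i$ produces $A\leq C$.

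The only real subtlety is verifying that the lattice-theoretic constructions land in the correct spaces with correct norm control, which is exactly what the interval-preserving lattice-homeomorphism hypothesis on the embeddings $X_i\hookrightarrow\mathcal{H}$ in the definition of a quasi-Banach lattice couple is designed to guarantee. Everything else is a formal chasing of Riesz decomposition, so the argument reproduces verbatim the Banach-lattice case; no use of convexity or completeness beyond what is built into the notion of a quasi-Banach lattice couple is needed.
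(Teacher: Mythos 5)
Your proof is correct and essentially mirrors the paper's: both use the Riesz decomposition property to reconcile the inequality and equality infima, and both build a decomposition $x=x_0+x_1$ of $x$ from a positive decomposition $|x|=u_0+u_1$. Your final construction $x_i=2v_i-u_i$ from an abstract Riesz split $x^+=v_0+v_1$ literally coincides with the paper's explicit choice $z_i=u_i\wedge x_+-u_i\wedge x_-$ once one takes $v_i=u_i\wedge x_+$, so the two arguments are the same up to presentation.
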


\begin{proof}
Denote by $A(t,x)$ (resp. $B(t,x)$) the first (resp. second) infimum in the
statement of the lemma.

Let $x\in \Sigma \left( \overline{X}\right) $ and $|x|\leq x_{0}+x_{1}$, $%
0\leq x_{i}\in X_{i}$. Then, by the decomposition property (see e.g. \cite[%
p.~2]{LT79-II}), we may write $\left\vert x\right\vert =y_{0}+y_{1}$, where $%
0\leq y_{i}\leq \left\vert x_{i}\right\vert$, $i=0,1$. Hence, $y_{i}\in
X_{i} $ and $\left\Vert y_{i}\right\Vert _{X_{i}}\leq \left\Vert
x_{i}\right\Vert _{X_{i}},$ $i=0,1$. Thus, 
\begin{equation*}
\left\Vert x_{0}\right\Vert _{X_{0}}+t\left\Vert x_{1}\right\Vert
_{X_{1}}\geq \left\Vert y_{0}\right\Vert _{X_{0}}+t\left\Vert
y_{1}\right\Vert _{X_{1}}\geq A(t,x).
\end{equation*}%
Passing to the infimum in the left-hand side of this inequality, we deduce
that $B(t,x)\geq A(t,x)$. Since the opposite inequality is obvious, we get $%
A(t,x)=B(t,x)$.

Next, if $x\in \Sigma \left( \overline{X}\right) $ and $%
|x|=x_{0}+x_{1},x_{i}\in X_{i}$, then $\left\vert x\right\vert \leq
\left\vert x_{0}\right\vert +\left\vert x_{1}\right\vert $ and hence 
\begin{equation*}
\left\Vert x_{0}\right\Vert _{X_{0}}+t\left\Vert x_{1}\right\Vert
_{X_{1}}\geq B(t,x).
\end{equation*}%
Therefore, $K\left( t,|x|;\overline{X}\right) \geq B(t,x)$. It is clear also
that $K\left( t,|x|;\overline{X}\right) \leq A(t,x)$. Summing up, we have 
\begin{equation}
K\left( t,|x|;\overline{X}\right) =A(t,x)=B(t,x).  \label{Prop-K}
\end{equation}

Let now $x\in \Sigma \left( \overline{X}\right) $ and $x=x_{0}+x_{1}$, $%
x_{i}\in X_{i}$. Then $\left\vert x\right\vert \leq \left\vert
x_{0}\right\vert +\left\vert x_{1}\right\vert $, which implies as above that 
\begin{equation}
K\left( t,x;\overline{X}\right) \geq B(t,x).  \label{Prop-K1}
\end{equation}

Conversely, assume that $x\in \Sigma \left( \overline{X}\right) $, $%
|x|=x_{0}+x_{1}$ and $0\leq x_{i}\in X_{i}$. Denoting $x_{+}=x\vee 0$, $%
x_{-}=(-x)\vee 0$, we set $z_{0}:=x_{0}\wedge x_{+}-x_{0}\wedge x_{-}$ and $%
z_{1}:=x_{1}\wedge x_{+}-x_{1}\wedge x_{-}$. Clearly, $z_{i}\in X_{i}$ and,
since $x_{i}\leq |x|$ (see also \cite[Theorem~1.d.1]{LT79-II} or \cite[p.~142%
]{Kal84}), we have 
\begin{equation*}
\Vert z_{i}\Vert _{X_{i}}\leq \Vert x_{i}\wedge x_{+}+x_{i}\wedge x_{-}\Vert
_{X_{i}}=\Vert x_{i}\wedge |x|\Vert _{X_{i}}=\Vert x_{i}\Vert
_{X_{i}},\;\;i=0,1.
\end{equation*}%
Moreover, 
\begin{eqnarray*}
z_{0}+z_{1} &=&(x_{0}\wedge x_{+}+x_{1}\wedge x_{+})-(x_{0}\wedge
x_{-}+x_{1}\wedge x_{-}) \\
&=&(x_{0}+x_{1})\wedge x_{+}-(x_{0}+x_{1})\wedge x_{-}=x_{+}-x_{-}=x.
\end{eqnarray*}%
Thus, 
\begin{equation*}
\left\Vert x_{0}\right\Vert _{X_{0}}+t\left\Vert x_{1}\right\Vert
_{X_{1}}\geq \left\Vert z_{0}\right\Vert _{X_{0}}+t\left\Vert
z_{1}\right\Vert _{X_{1}}\geq K\left( t,x;\overline{X}\right) ,
\end{equation*}%
whence 
\begin{equation*}
A(t,x)\geq K\left( t,x;\overline{X}\right) .
\end{equation*}%
Combining this together with \eqref{Prop-K} and \eqref{Prop-K1}, we complete
the proof.
\end{proof}

\begin{proposition}
\label{K-functional for p-convexifications} Let $p>0$ and $\overline{X}%
=(X_0,X_1)$ be a $p$-convex quasi-Banach lattice couple. Then $\Sigma ( 
\overline{X^{\left( 1/p\right) }}) =\Sigma \left( \overline{X}\right)
^{\left(1/p\right) }$. Moreover, for arbitrary $x\in \Sigma ( \overline{%
X^{\left( 1/p\right) }})$ and all $t>0$ we have 
\begin{equation}
\min(2^{2p-2},2^{1-p})\min_{i=0,1}(M^{\left( p\right)
}\left(X_{i}\right))^{-p}K(t^{1/p},x;\overline{X})^{p}\le K(t,x; \overline{%
X^{\left( 1/p\right) }})  \label{K-funct}
\end{equation}
and 
\begin{equation}
K(t,x; \overline{X^{\left( 1/p\right) }})\le%
\max(2^{p-1},2^{1-p})K(t^{1/p},x; \overline{X})^{p}.  \label{K-funct1}
\end{equation}
\end{proposition}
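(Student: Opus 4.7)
The plan is first to handle the set equality $\Sigma(\overline{X^{(1/p)}}) = \Sigma(\overline{X})^{(1/p)}$, and then to derive both $K$-functional estimates by matching admissible positive decompositions between the two couples.

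For the set equality, note that $\Sigma(\overline{X})$ is $p$-convex by Proposition \ref{sum is p-convex}, so the convexification $\Sigma(\overline{X})^{(1/p)}$ is a well-defined quasi-Banach lattice. On nonnegative elements the $\oplus$-sum of the convexified ambient reads $y_0 \oplus y_1 = (y_0^p + y_1^p)^{1/p}$, and comparison of the $\ell^1$- and $\ell^p$-norms on $\mathbb{R}^2$ yields the pointwise inequality
\[
\min(1, 2^{1/p-1})(y_0 + y_1) \le (y_0^p + y_1^p)^{1/p} \le \max(1, 2^{1/p-1})(y_0 + y_1).
\]
Combined with the Riesz decomposition property of vector lattices, this lets one convert any positive decomposition in one of the two sums into one in the other, with the constants above, giving set equality together with equivalence of the quasi-norms.

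For the $K$-functional estimates, apply Lemma \ref{Prop-K-functional} to the couple $\overline{X^{(1/p)}}$; using $\|y_i\|_{X_i^{(1/p)}} = \|y_i\|_{X_i}^p$ and the form of $\oplus$ on positive elements rewrites the $K$-functional as
\[
K(t, x; \overline{X^{(1/p)}}) = \inf\left\{\|y_0\|_{X_0}^p + t\|y_1\|_{X_1}^p : |x|^p \le y_0^p + y_1^p,\; y_0,y_1 \ge 0,\; y_i \in X_i\right\}.
\]
For the upper bound I would start from a near-optimal decomposition $|x| \le z_0 + z_1$ realizing $K(t^{1/p}, x; \overline{X})$, set $y_i := \max(1, 2^{p-1})^{1/p} z_i$, and use $(z_0 + z_1)^p \le \max(1, 2^{p-1})(z_0^p + z_1^p)$ for admissibility together with the scalar inequality $a^p + b^p \le \max(1, 2^{1-p})(a+b)^p$ on the objective, producing the factor $\max(2^{p-1}, 2^{1-p})$. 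The lower bound follows by the symmetric construction: from $|x|^p \le y_0^p + y_1^p$ extract $|x| \le \max(1, 2^{1/p-1})(y_0 + y_1)$ to obtain an admissible decomposition of $|x|$ in $\overline{X}$, and compare the objectives via $(a+b)^p \le \max(1, 2^{p-1})(a^p + b^p)$.

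The main obstacle is the constant bookkeeping: two independent elementary inequalities have to be combined in each direction, one pointwise in the lattice (comparing $(y_0^p + y_1^p)^{1/p}$ with $y_0 + y_1$, contributing a factor of order $2^{|1/p-1|}$) and one on scalars (comparing $a^p + b^p$ with $(a+b)^p$, contributing $2^{|p-1|}$). The factor $\min_{i=0,1}(M^{(p)}(X_i))^{-p}$ in the stated lower bound enters when one uses $p$-convexity of the individual lattices $X_i$, via Proposition \ref{sum is p-convex} and the functional-calculus identities, to ensure that the $\oplus$-operation and the constructed positive decompositions remain compatible with the original norms in the abstract (non-function-lattice) setting.
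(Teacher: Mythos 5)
Your proof is correct, and for the lower bound \eqref{K-funct} it takes a genuinely different and arguably cleaner route than the paper's. The paper proves \eqref{K-funct} by invoking $p$-convexity of $\Sigma(\overline{X})$ (via Proposition \ref{sum is p-convex}): given $x=x_0\oplus x_1=(x_0^p+x_1^p)^{1/p}$ it estimates $\|x\|_{X_0+t^{1/p}X_1}$ using the $p$-convexity inequality, which is exactly how the factor $\min_i(M^{(p)}(X_i))^{-p}$ enters the stated constant. You instead use the pointwise lattice inequality $(y_0^p+y_1^p)^{1/p}\leq\max(1,2^{1/p-1})(y_0+y_1)$ (valid via the homogeneous functional calculus in any quasi-Banach lattice) to turn an admissible $\le$-decomposition for $\overline{X^{(1/p)}}$ directly into one for $\overline{X}$, and then compare the objectives with the scalar inequality $(a+b)^p\leq\max(1,2^{p-1})(a^p+b^p)$. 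This avoids Proposition \ref{sum is p-convex} entirely and yields the constant $\min(2^{p-1},2^{1-p})$, which is at least as good as (and for $p\neq1$ strictly better than) $\min(2^{2p-2},2^{1-p})\min_i(M^{(p)}(X_i))^{-p}$; in particular the $p$-convexity hypothesis on $\overline{X}$ is not needed for the $K$-functional comparison. Your upper bound argument is essentially the same as the paper's (both reduce to $a^p+b^p\leq\max(1,2^{1-p})(a+b)^p$), the only difference being that you rescale the given decomposition by $\max(1,2^{p-1})^{1/p}$ rather than invoking the Riesz decomposition property in $\Sigma(\overline{X})^{(1/p)}$ to get an exact $\oplus$-decomposition dominated termwise by the given one. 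One small caveat: your closing paragraph attributes the $\min_i(M^{(p)}(X_i))^{-p}$ factor to a necessity of the abstract lattice setting, but your own proof shows the opposite — that factor is an artifact of the paper's route through Proposition \ref{sum is p-convex} and is not needed for the estimate.
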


\begin{proof}
For definiteness, we consider again only the case when $0<p\le 1$. Moreover,
by Lemma \ref{Prop-K-functional}, we can (and will) assume that $x\ge 0$.

Suppose first $x\in \Sigma ( \overline{X^{\left(1/p\right) }})$. Take any
decomposition $x=x_0\oplus x_1=(x_0^p+x_1^p)^{1/p}$ with $0\le x_{0}\in
X_0^{(1/p)},0\le x_{1}\in X_1^{(1/p)}$. Since $X_0$ and $X_1$ are $p$%
-convex, then, by Proposition \ref{sum is p-convex}, the sum $X_0+t^{1/p}X_1$
is $p$-convex for every $t>0$ with the constant $C_p:=\max(2^{1-1/p},
2^{2/p-2})\max_{i=0,1}M^{\left( p\right) }\left(X_{i}\right).$ Consequently, 
\begin{eqnarray*}
K(t^{1/p},x; \overline{X})^{p} &=&\|x\|_{X_0+t^{1/p}X_1}^p \\
&=&\|(x_0^p+x_1^p)^{1/p}\|_{X_0+t^{1/p}X_1}^p \\
&\le& C_p^p(\|x_0\|_{X_0+t^{1/p}X_1}^{p}+\|x_1\|_{X_0+t^{1/p}X_1}^{p}) \\
&\le& C_p^p(\|x_0\|_{X_0}^p+(t^{1/p}\|x_1\|_{X_1})^{p}) \\
&=& C_p^p(\|x_0\|_{X_0^{(1/p)}}+t\|x_1\|_{X_1^{(1/p)}}).
\end{eqnarray*}
Thus, $x\in \Sigma \left( \overline{X}\right) ^{\left(1/p\right) }$ and from
Lemma \ref{Prop-K-functional} it follows that 
\begin{equation*}
K(t^{1/p},x; \overline{X})^{p}\le C_p^p K(t,x; \overline{X^{\left(
1/p\right) }}),
\end{equation*}
which is equivalent to inequality \eqref{K-funct}.

Conversely, let $x\in \Sigma \left( \overline{X}\right) ^{\left( 1/p\right)
} $ and $x=y_{0}+y_{1}$, with $0\leq y_{0}\in X_{0}$, $0\leq y_{1}\in X_{1}$%
. Then, since $0<p\le 1$, we have $x\leq (y_{0}^{p}+y_{1}^{p})^{1/p}$.
Applying once more the decomposition property to the lattice $\Sigma \left( 
\overline{X}\right) ^{\left( 1/p\right) }$ (see \cite[p.~2]{LT79-II}), we
may write $x=(x_{0}^{p}+x_{1}^{p})^{1/p}$, where $0\leq x_{i}\leq y_{i}$, $%
i=0,1$. Since this implies that $x_{i}\in X_{i}^{(1/p)}$, $i=0,1$, it
follows that $x\in \Sigma (\overline{X^{\left( 1/p\right) }})$ and moreover 
\begin{eqnarray*}
K(t,x;\overline{X^{\left( 1/p\right) }}) &\leq &\Vert x_{0}\Vert
_{X_{0}^{(1/p)}}+t\Vert x_{1}\Vert _{X_{1}^{(1/p)}}=\Vert x_{0}\Vert
_{X_{0}}^{p}+t\Vert x_{1}\Vert _{X_{1}}^{p} \\
&\leq &\Vert y_{0}\Vert _{X_{0}}^{p}+t\Vert y_{1}\Vert _{X_{1}}^{p}\leq
2^{1-p}(\Vert y_{0}\Vert _{X_{0}}+t^{1/p}\Vert y_{1}\Vert _{X_{1}})^{p}.
\end{eqnarray*}%
Applying Lemma \ref{Prop-K-functional}, we see that this inequality implies %
\eqref{K-funct1}. Therefore, the proof is completed.
\end{proof}

\begin{proposition}
\label{parameter is a sum} If $\overline{X}=(X_{0},X_{1})$ is a quasi-Banach
couple, then for arbitrary $E_{2},E_{3}\in Int\left( \overline{L^{\infty }}%
\right) $ we have 
\begin{equation*}
\overline{X}_{E_{2}:K}+\overline{X}_{E_{3};K}=\overline{X}_{E_2+E_{3};K}.
\end{equation*}
\end{proposition}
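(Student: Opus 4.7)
The plan is to derive the identity as the $t=1$ instance of the Brudnyi--Kruglyak reiteration formula~\eqref{reiteration1}. The key elementary observation is that for every quasi-Banach couple $(Y_{0},Y_{1})$ and every $y\in\Sigma(Y_{0},Y_{1})$ one has $\|y\|_{Y_{0}+Y_{1}}=K(1,y;Y_{0},Y_{1})$ directly from the definition of the $K$-functional, with the identity remaining meaningful in the sense that both sides are simultaneously finite or $+\infty$.

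First I would apply \eqref{reiteration1} with $E_{0}:=E_{2}$ and $E_{1}:=E_{3}$ --- legitimate because $E_{2},E_{3}\in Int(\overline{L^{\infty}})$ by hypothesis --- to obtain
$$K(t,x;\overline{X}_{E_{2}:K},\overline{X}_{E_{3}:K})\cong K(t,K(\cdot,x;\overline{X});E_{2},E_{3}),\qquad t>0,$$
with constants independent of $x$. Specializing to $t=1$ and applying the key observation to each side then gives
$$\|x\|_{\overline{X}_{E_{2}:K}+\overline{X}_{E_{3}:K}}\cong\|K(\cdot,x;\overline{X})\|_{E_{2}+E_{3}}=\|x\|_{\overline{X}_{E_{2}+E_{3}:K}},$$
again with constants independent of $x$. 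Since the equivalence constants do not depend on $x$, the two quasi-norms are finite on exactly the same elements, which yields the asserted equality of spaces with equivalent quasi-norms.

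I expect no genuine obstacle, as the proposition is essentially the $t=1$ shadow of reiteration already at our disposal. An arguably tidier variant would be to invoke the set-level reiteration~\eqref{reiteration} with parameter $F:=\Sigma(\overline{L^{\infty}})$ and observe that $\overline{Y}_{F:K}=\Sigma(\overline{Y})$ for every quasi-Banach couple $\overline{Y}$: indeed, any $K$-functional $t\mapsto K(t,y;\overline{Y})$ is concave and hence coincides with its own least concave majorant, i.e.\ with $K(\cdot,K(\cdot,y;\overline{Y});\overline{L^{\infty}})$, so $\|y\|_{\overline{Y}_{F:K}}=K(1,y;\overline{Y})=\|y\|_{\Sigma(\overline{Y})}$. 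Applying this to the outer couple $(\overline{X}_{E_{2}:K},\overline{X}_{E_{3}:K})$ and to the inner couple $(E_{2},E_{3})$, \eqref{reiteration} becomes $\overline{X}_{E_{2}:K}+\overline{X}_{E_{3}:K}=\overline{X}_{(E_{2}+E_{3}):K}$ at once, with no bookkeeping of constants.
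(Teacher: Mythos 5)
Your proof is correct, but it takes a genuinely different route from the paper. The paper argues directly from weak $K$-divisibility (\cite[Theorem~3.2.12]{BK91}): for $x\in\overline{X}_{E_2+E_3:K}$ it decomposes $K(\cdot,x;\overline{X})=f_2+f_3$ with $0\le f_i\in E_i$ (using Lemma~\ref{Prop-K-functional} applied to $(E_2,E_3)$, then passing to least concave majorants, which stay in $E_i$ since $E_i\in Int(\overline{L^{\infty}})$), obtains $x=x_2+x_3$ with $K(t,x_i;\overline{X})\preceq f_i(t)$, and handles the opposite inclusion via the quasi-triangle inequality for $K$. You instead invoke the Brudnyi--Kruglyak reiteration theorem as a black box and specialize at $t=1$. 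This is shorter because the $K$-divisibility work is absorbed into the quoted reiteration formula, while the paper's direct argument is more self-contained and makes explicit where $K$-divisibility enters. One caution on your first variant: \eqref{reiteration1} is stated for $x\in\overline{X}_{E_2:K}+\overline{X}_{E_3:K}$, so for the inclusion $\overline{X}_{E_2+E_3:K}\subset\overline{X}_{E_2:K}+\overline{X}_{E_3:K}$ you implicitly extend the equivalence to all $x\in\Sigma(\overline{X})$ with $K(\cdot,x;\overline{X})\in E_2+E_3$; the informal phrase about both sides being ``simultaneously finite or $+\infty$'' glosses over exactly the point that $K$-divisibility would supply. Your second variant, via the set-level formula \eqref{reiteration} with $F=\Sigma(\overline{L^{\infty}})$, is cleaner precisely because it is already an equality of spaces; and your identification $\overline{Y}_{F:K}=\Sigma(\overline{Y})$, justified through $\|h\|_{\Sigma(\overline{L^{\infty}})}=h(1)$ for $h\in Conv$, is correct.
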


\begin{proof}
Let $x\in \overline{X}_{E_{2}+E_{3};K}$. Then $K(\cdot ,x;\overline{X})\in
E_{2}+E_{3}$ and so, by Lemma \ref{Prop-K-functional}, $K(\cdot ,x;\overline{%
X})=f_{2}(\cdot )+f_{3}(\cdot )$, where $0\leq f_{2}\in E_{2},0\leq f_{3}\in
E_{3}$ and 
\begin{equation}
\Vert x\Vert _{\overline{X}_{E_{2}+E_{3};K}}\succeq \Vert f_{2}\Vert
_{E_{2}}+\Vert f_{3}\Vert _{E_{3}}  \label{when parameter is a sum}
\end{equation}%
with some constant independent of $x$. Moreover, since $E_{2},E_{3}\in
Int\left( \overline{L^{\infty }}\right) $, we may assume that $%
f_{2},f_{3}\in Conv$. Therefore, by a weak version of $K$-divisibility
property, which holds for quasi-Banach couples \cite[Theorem~3.2.12]{BK91},
we can find a decomposition $x=x_{2}+x_{3}$ such that $K\left( t,x_{i};%
\overline{X}\right) \leq \gamma f_{i}(t)$, $i=2,3$, for a universal constant 
$\gamma $ and all $t>0$. Combining these inequalities with 
\eqref{when
parameter is a sum}, we get 
\begin{eqnarray*}
\Vert x\Vert _{\overline{X}_{E_{2}+E_{3};K}} &\succeq &\Vert K\left( t,x_{2};%
\overline{X}\right) \Vert _{E_{2}}+\Vert K\left( t,x_{3};\overline{X}\right)
\Vert _{E_{3}} \\
&=&\Vert x_{2}\Vert _{\overline{X}_{E_{2};K}}+\Vert x_{3}\Vert _{\overline{X}%
_{E_{3};K}}\geq \Vert x\Vert _{\overline{X}_{E_{2}:K}+\overline{X}%
_{E_{3};K}},
\end{eqnarray*}%
whence 
\begin{equation*}
\overline{X}_{E_{2}+E_{3};K}\subset \overline{X}_{E_{2}:K}+\overline{X}%
_{E_{3};K}.
\end{equation*}

To prove the opposite embedding, assume that $x\in \overline{X}_{E_{2}:K}+%
\overline{X}_{E_{3};K}.$ Then, $x=x_2+x_3$, with $x_i\in \overline{X}%
_{E_{i}:K}$, $i=2,3$, and 
\begin{equation*}
\|x\|_{\overline{X}_{E_{2}:K}+\overline{X}_{E_{3};K}}\succeq \|x_2\|_{%
\overline{X}_{E_{2}:K}}+\|x_3\|_{\overline{X}_{E_{3}:K}}
\end{equation*}
with some constant independent of $x$. Therefore, since 
\begin{equation*}
K\left(t,x;\overline{X}\right)\le \max(C_0,C_1)\left(K\left( t,x_{2};%
\overline{X}\right)+K\left( t,x_{3};\overline{X}\right)\right),\;\;t>0,
\end{equation*}
where $C_i$ is the constant in the quasi-triangle inequality for $X_i$, $%
i=0,1$, we have 
\begin{eqnarray*}
\|x\|_{\overline{X}_{E_2+E_{3};K}} &=&\|K\left(\cdot,x;\overline{X}%
\right)\|_{E_2+E_{3}} \\
&\preceq& \|K\left(\cdot,x_2;\overline{X}\right)\|_{E_2}+\|K\left(\cdot,x_3;%
\overline{X}\right)\|_{E_3} \\
&=& \|x_2\|_{\overline{X}_{E_2;K}} +\|x_3\|_{\overline{X}_{E_3;K}}\preceq
\|x\|_{\overline{X}_{E_{2}:K}+\overline{X}_{E_{3};K}},
\end{eqnarray*}
which implies that 
\begin{equation*}
\overline{X}_{E_{2}:K}+\overline{X}_{E_{3};K}\subset \overline{X}%
_{E_2+E_{3};K},
\end{equation*}
and the proposition is proved.
\end{proof}

\vskip0.2cm

For some versions of the next result see \cite[Theorem 3.6.7, p. 415]{BK91}
and \cite[Theorem 3.20]{Nil82}. Recall that a quasi-Banach function lattice $%
E$ on a measure space $(T,\Sigma,\mu)$ \textit{has the Fatou property} if
from $x_n\in E,$ $n=1,2,\dots,$ $\sup_{n=1,2,\dots}\|x_n\|_E<\infty$ and $%
x_n\to{x}$ a.e. on $T $ it follows that $x\in E$ and $||x||_E\le
\liminf_{n\to\infty}{||x_n||_E}. $ Observe that such a lattice $E$ has the
Fatou property whenever from $x_n\in E,$ $x_n\ge 0$, $n=1,2,\dots,$ and $%
x_n\uparrow x$ a.e. on $T$ it follows that $x\in E$ and $\lim_{n\to\infty}%
\|x_n\|_E=\|x\|_E$.

\begin{proposition}
\label{Lemma-admissibility-1} Let $\overline{X}=(X_{0},X_{1})$ be a
quasi-Banach couple and let $X_{2}=\left( X_{0},X_{1}\right)
_{E_{2}:K},X_{3}=\left( X_{0},X_{1}\right)_{E_{3};K}$, where $E_{2},E_{3}\in
Int\left( \overline{L^{\infty }}\right) $. Suppose that at least one of the
following conditions holds:

(a) $\left( E_{2},E_{3}\right)$ is a mutually closed quasi-Banach couple;

(b) $E_{2}$ and $E_{3}$ are relatively complete spaces with respect to the
couple $\overline{L^{\infty }}$;

(c) $E_2$ and $E_3$ have the Fatou property.

Then, the couple $\left( X_{2},X_{3}\right)$ is mutually closed.
\end{proposition}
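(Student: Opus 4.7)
The strategy is to transfer the question about mutual closedness of $(X_{2},X_{3})$ to a statement about the parameter couple $(E_{2},E_{3})$ via the reiteration theorem, and then invoke whichever of the hypotheses (a), (b), (c) is assumed. By the characterization recalled just before the statement, it suffices to establish the two estimates
\begin{equation*}
\|x\|_{X_{2}}\preceq \sup_{t>0}K(t,x;X_{2},X_{3}) \quad \text{and} \quad \|x\|_{X_{3}}\preceq \sup_{t>0}t^{-1}K(t,x;X_{2},X_{3}),
\end{equation*}
since the reverse inequalities are automatic. Writing $\phi_{x}(t):=K(t,x;\overline{X})$, the reiteration identity \eqref{reiteration1} yields $K(t,x;X_{2},X_{3})\cong K(t,\phi_{x};E_{2},E_{3})$, and by construction $\|x\|_{X_{i}}=\|\phi_{x}\|_{E_{i}}$ for $i=2,3$. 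So the proof reduces to controlling $\|\phi_{x}\|_{E_{i}}$ by quantities involving the $K$-functional of $\phi_{x}$ relative to $(E_{2},E_{3})$.

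Under hypothesis (a), the same characterization applied directly to the couple $(E_{2},E_{3})$ gives $\|\phi_{x}\|_{E_{2}}\cong \sup_{t>0}K(t,\phi_{x};E_{2},E_{3})$ and $\|\phi_{x}\|_{E_{3}}\cong \sup_{t>0}t^{-1}K(t,\phi_{x};E_{2},E_{3})$; combining with the identities above produces what we want in one line. Under hypotheses (b) and (c), I would instead use the original sequential definition of the relative closure. Given $\{x_{n}\}\subset X_{2}$ with $\|x_{n}\|_{X_{2}}\leq M$ and $x_{n}\to x$ in $\Sigma(X_{2},X_{3})$, the continuous embedding $\Sigma(X_{2},X_{3})\hookrightarrow \Sigma(\overline{X})$ ensures $x_{n}\to x$ in $\Sigma(\overline{X})$ as well. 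Since for each fixed $t>0$ the functional $K(t,\cdot;\overline{X})$ is a continuous quasi-norm on $\Sigma(\overline{X})$, we obtain pointwise convergence $\phi_{x_{n}}(t)\to \phi_{x}(t)$, while $\|\phi_{x_{n}}\|_{E_{2}}=\|x_{n}\|_{X_{2}}\leq M$ is uniformly bounded.

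Under (c), the Fatou property of $E_{2}$ applied to the a.e.\ convergent, $E_{2}$-bounded sequence $\{\phi_{x_{n}}\}$ immediately yields $\phi_{x}\in E_{2}$ with $\|\phi_{x}\|_{E_{2}}\leq \liminf_{n}\|\phi_{x_{n}}\|_{E_{2}}\leq M$, so $x\in X_{2}$ with comparable norm. Under (b), using the uniform bound $\|\phi_{x_{n}}\|_{\Sigma(\overline{L^{\infty}})}\preceq \|x_{n}\|_{\Sigma(\overline{X})}$ together with the concavity of the $\phi_{x_{n}}$, the pointwise convergence is upgraded to convergence in $\Sigma(\overline{L^{\infty}})$ (here one may invoke the observation from Section \ref{Prel3} that $K(t,h;\overline{L^{\infty}})$ is the least concave majorant of $|h|$, so concave functions converge in $\Sigma(\overline{L^{\infty}})$ essentially as well as the functions they majorize), after which relative completeness of $E_{2}$ with respect to $\overline{L^{\infty}}$ yields $\phi_{x}\in E_{2}$ with controlled norm. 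The argument for $X_{3}$ is entirely symmetric, working with $E_{3}$ and $L^{\infty}(1/t)$ in place of $E_{2}$ and $L^{\infty}$.

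The main obstacle I anticipate is the quasi-Banach version of case (b): in a Banach couple one has the reverse quasi-triangle estimate $|K(t,x_{n};\overline{X})-K(t,x;\overline{X})|\leq K(t,x_{n}-x;\overline{X})$, which delivers $\Sigma(\overline{L^{\infty}})$-convergence of the $\phi_{x_{n}}$ for free, whereas in the quasi-Banach setting this estimate carries an unavoidable multiplicative constant. Circumventing this point by exploiting the concavity of $\phi_{x_{n}}$ (which places these functions in the cone $Conv$, where pointwise limits and concave majorants interact nicely with $\Sigma(\overline{L^{\infty}})$) is the technical heart of the proof.
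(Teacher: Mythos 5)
Your proof of case (a) is correct and coincides with the paper's: both use the reiteration identity \eqref{reiteration1} together with the $\sup_t K$ characterization of mutual closedness. For (b) and (c), however, you take a direct route (arguing with the functions $\phi_{x_n}=K(\cdot,x_n;\overline{X})$), whereas the paper simply observes that (c) $\Rightarrow$ (b) $\Rightarrow$ (a): if $E_2,E_3\in Int(\overline{L^\infty})$ are relatively complete with respect to $\overline{L^\infty}$, then since $\Sigma(E_2,E_3)\hookrightarrow\Sigma(\overline{L^\infty})$ continuously, any $\Sigma(E_2,E_3)$-limit of an $E_i$-bounded sequence is already an $\Sigma(\overline{L^\infty})$-limit, giving mutual closedness of $(E_2,E_3)$, i.e.\ case (a). And (c) implies (b) because convergence in $\Sigma(\overline{L^\infty})=L^\infty(\min(1,1/t))$ forces a.e.\ convergence on $(0,\infty)$, after which the Fatou property applies directly to the parameter functions. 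This reduction takes place entirely inside the Banach couple $\overline{L^\infty}$, where none of the quasi-Banach pathologies you worry about arise.

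The reason the paper's route is more than a cosmetic simplification is that your direct argument for (c) has a genuine gap. You assert that $K(t,\cdot;\overline{X})$ is a \emph{continuous} quasi-norm on $\Sigma(\overline{X})$ and deduce $\phi_{x_n}(t)\to\phi_x(t)$ pointwise. When the quasi-triangle constant $C>1$, this is not automatic: a quasi-norm need not be continuous for the topology it induces, and the quasi-triangle inequality only yields
\begin{equation*}
C^{-1}\,\phi_x(t)\ \leq\ \liminf_{n\to\infty}\phi_{x_n}(t)\ \leq\ \limsup_{n\to\infty}\phi_{x_n}(t)\ \leq\ C\,\phi_x(t),
\end{equation*}
which is strictly weaker than pointwise convergence. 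The Fatou property as stated in the paper requires a.e.\ convergence, so it does not apply as written. (The argument can in fact be rescued: set $g_n:=\inf_{k\ge n}\phi_{x_k}$, so $0\le g_n\le\phi_{x_n}$, $g_n\uparrow\liminf_k\phi_{x_k}\ge C^{-1}\phi_x$, and the monotone form of the Fatou property plus the lattice property of $E_2$ gives $\phi_x\in E_2$ with controlled norm. But you did not carry this out, and it is far heavier than reducing to (b).) You correctly flag the analogous difficulty for (b) and gesture at a compactness-of-concave-functions workaround, but that is likewise left unfinished; again, observing that relative completeness with respect to $\overline{L^\infty}$ already entails mutual closedness of the parameter couple makes (b) a one-line corollary of (a).
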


\begin{proof}
Assuming the condition (a) to be hold, we have 
\begin{equation*}
\Vert f\Vert _{E_{2}}\cong \sup_{t>0}K(t,f; E_{2},E_{3})\;\;\mbox{and}%
\;\;\Vert f\Vert _{E_{3}}\cong \sup_{t>0}\frac{1}{t}K(t,f; E_{2},E_{3}),
\end{equation*}%
with some constants independent of $f$ (see Section\ref{Prel1} or \cite[%
Lemma 2.2.21 p.123]{BK91} or \cite[p.~384]{Ovc84}). Moreover, by the
reiteration theorem (see equivalence \eqref{reiteration1} or \cite[%
Theorem~3.3.11]{BK91}), 
\begin{equation*}
K\left( t,x;X_{2},X_{3}\right) \cong K\left( t,K\left( \cdot ,x;\overline{X}%
\right) ;E_2,E_3\right) ,\;\;t>0, 
\end{equation*}%
with constants independent of $x\in X_{2}+X_{3} $. Therefore, 
\begin{eqnarray*}
\sup_{t>0}K\left( t,x;X_{2},X_{3}\right)&\cong& \sup_{t>0}K\left( t,K\left(
\cdot ,x;\overline{X}\right) ;E_2,E_3\right) \\
&\cong& \|K\left(\cdot,x;\overline{X}\right)\|_{E_{2}}=\|x\|_{E_{2}}
\end{eqnarray*}
and similarly 
\begin{eqnarray*}
\sup_{t>0}\frac1t K\left( t,x;X_{2},X_{3}\right)&\cong& \sup_{t>0}\frac1t
K\left( t,K\left( \cdot ,x;\overline{X}\right) ;E_2,E_3\right) \\
&\cong& \|K\left(\cdot,x;\overline{X}\right)\|_{E_{3}}=\|x\|_{E_{3}}.
\end{eqnarray*}
This implies that the couple $\left( X_{2},X_{3}\right)$ is mutually closed.

Since $E_{2},E_{3}\in Int\left( \overline{L^{\infty }}\right) $, then the
condition (a) is a consequence of (b). Hence, the result follows also in the
case (b).

Finally, assume that the condition (c) holds. Suppose that $\{x_n\}$ is a
sequence from $E_2$ such that $C:=\sup_{n=1,2,\dots}\|x_n\|_{E_2}<\infty$
and $x_n\to x$ in $\Sigma\left( \overline{L^{\infty }}\right)$. Then, $%
x_n\to x$ a.e. on $(0,\infty)$. Therefore, since $E_2$ has the Fatou
property, we obtain that $x\in E_2$ and $\|x\|_{E_2}\le C$. As a result, $%
E_{2}$ is a relatively complete space with respect to the couple $\overline{%
L^{\infty }}$. Since the same result holds for $E_3$, we get (b). This
completes the proof.
\end{proof}

In particular, since the space $L^{p}\left( t^{-\theta },\frac{dt}{t}\right) 
$ is relatively complete with respect to the couple $\overline{L^{\infty }}$
for all $0<\theta <1$ and $0<p\leq \infty $, we have

\begin{corollary}
\label{Cor-admissibility} For every quasi-Banach couple $\overline{X}%
=(X_{0},X_{1})$ and all $0<\theta_0,\theta_1 <1$, $0<p_0,p_1\leq \infty$ the
couple $\left( \overline{X}_{\theta_0 ,p_0},\overline{X}_{\theta_1,p_1}%
\right) $ is mutually closed.
\end{corollary}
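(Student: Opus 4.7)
The corollary is to be obtained as a direct application of Proposition \ref{Lemma-admissibility-1}. The plan is as follows.

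First, I would unwind the Lions--Peetre notation: by the definition given earlier in the Preliminaries, each space $\overline{X}_{\theta_i,p_i}$ coincides with $\overline{X}_{E_i:K}$, where $E_i$ is the parameter space
\begin{equation*}
E_i = L^{p_i}\bigl(t^{-\theta_i},\tfrac{dt}{t}\bigr), \qquad i=0,1.
\end{equation*}
So the statement reduces to showing that the couple $(\overline{X}_{E_0:K},\overline{X}_{E_1:K})$ is mutually closed, and this is exactly what Proposition \ref{Lemma-admissibility-1} gives, provided we can verify that the parameter spaces $E_0,E_1$ belong to $Int(\overline{L^{\infty}})$ and satisfy (at least) one of its three hypotheses (a), (b), (c).

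The cleanest route is to check condition (c), namely that each $E_i$ has the Fatou property. For an arbitrary weighted $L^{p}$-space $L^{p}(w,d\nu)$ on $(0,\infty)$ with $0<p\le\infty$ and a measurable weight $w\ge 0$, if $0\le f_n\uparrow f$ almost everywhere, then the monotone convergence theorem (for $p<\infty$, applied to $f_n^p w$ against the measure $d\nu$) and a direct essential-supremum argument (for $p=\infty$) yield $\|f_n\|_{E_i}\uparrow\|f\|_{E_i}$. This is the standard form of the Fatou property, and the remark preceding Proposition \ref{Lemma-admissibility-1} shows it is equivalent to the one stated there. The fact that $E_i\in Int(\overline{L^{\infty}})$ for $0<\theta_i<1$, $0<p_i\le\infty$ is classical (these are precisely the $K$-method parameter spaces producing $\overline{L^{\infty}}_{\theta_i,p_i}$), and can be seen directly from the two-sided estimate $|f(t)|\preceq\min(1,1/t)\|f\|_{\Delta(\overline{L^{\infty}})}$ for $f\in\Delta(\overline{L^{\infty}})$ combined with the convergence of $\int_0^\infty \min(1,1/t)^{p_i} t^{-\theta_i p_i-1}\,dt$ for $0<\theta_i<1$.

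Having established these two ingredients, one simply invokes Proposition \ref{Lemma-admissibility-1}(c) with $E_2=E_0$, $E_3=E_1$ to conclude that $(\overline{X}_{\theta_0,p_0},\overline{X}_{\theta_1,p_1})$ is mutually closed. I do not expect any genuine obstacle here: the only step requiring a tiny verification is the Fatou property of weighted $L^p$-spaces, and this is essentially Fatou's lemma against the measure $t^{-\theta_i p_i}\,dt/t$ (or a direct argument in the case $p_i=\infty$).
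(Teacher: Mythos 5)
Your proof is correct and follows essentially the same route as the paper: identify $\overline{X}_{\theta_i,p_i}=\overline{X}_{E_i:K}$ with $E_i=L^{p_i}(t^{-\theta_i},\tfrac{dt}{t})$ and apply Proposition~\ref{Lemma-admissibility-1}. The only cosmetic difference is that you verify hypothesis~(c) (the Fatou property, via monotone convergence) while the paper invokes hypothesis~(b) (relative completeness with respect to $\overline{L^{\infty}}$) directly; since the proposition's own proof shows (c)~$\Rightarrow$~(b), the two are the same argument.
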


\begin{remark}
The result of Corollary \ref{Cor-admissibility} is well known in the case
when $\overline{X}=(X_{0},X_{1})$ is a Banach couple and $1\le p_0,p_1\leq
\infty$. Indeed, then $\left( \overline{X}_{\theta_0 ,p_0},\overline{X}%
_{\theta_1,p_1}\right) $ is a $K$-monotone Banach couple \cite[Theorem~1]%
{Cwikel1} and hence it is mutually closed \cite[Lemma~3]{Cwikel0}.
\end{remark}

\vskip0.3cm

\section{Main results}

\label{Arazy-Cwikel}

In \cite{CwNi84}, it was announced a statement, asserting that the class of all $K$-spaces with respect to a Banach couple possesses the
Arazy-Cwikel property (see also \cite[p. 672]{BK91}). Below we give a proof of this result for mutually closed Banach couples (see Corollary 
\ref{Ar-Cw1-cor}). In fact, by using some ideas due to Bykov-Ovchinnikov \cite{BO06}, we obtain here some more general results of such type in the quasi-Banach setting.

\vskip0.2cm

The following main result of this paper establishes the Arazy-Cwikel
property for the class of $K$-monotone spaces with respect to an arbitrary
mutually closed quasi-Banach couple.

\begin{theorem}
\label{Ar-Cw1} Let $\overline{X}=(X_{0},X_{1})$ be a mutually closed
quasi-Banach couple. Then, for all $0<\theta <\eta <1$ and $0<p,q\leq \infty 
$ we have 
\begin{equation}  \label{main formula1}
Int^{KM}\left( \overline{X}_{\theta ,p},\overline{X}_{\eta ,q}\right)
=Int^{KM}\left( X_{0},\overline{X}_{\eta ,q}\right) \cap Int^{KM}\left( 
\overline{X}_{\theta ,p},X_{1}\right) .
\end{equation}
\end{theorem}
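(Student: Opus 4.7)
The plan is to reduce the problem to a question about three function couples on $(0,\infty)$. Set $L^p_\theta:=L^p(s^{-\theta},ds/s)$ and, for each $z\in\Sigma(\overline{X})$, write $\phi_z(s):=K(s,z;\overline{X})$. Since $\overline{X}$ is mutually closed, $X_0=\overline{X}_{L^\infty:K}$ and $X_1=\overline{X}_{L^\infty(1/t):K}$, and the reiteration formula \eqref{reiteration1} produces
\[
K(t,z;\overline{X}_{\theta,p},\overline{X}_{\eta,q})\cong K(t,\phi_z;L^p_\theta,L^q_\eta),
\]
together with the analogous identifications $K(t,z;X_0,\overline{X}_{\eta,q})\cong K(t,\phi_z;L^\infty,L^q_\eta)$ and $K(t,z;\overline{X}_{\theta,p},X_1)\cong K(t,\phi_z;L^p_\theta,L^\infty(1/t))$. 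Both inclusions of \eqref{main formula1} thereby transfer into statements for these three function couples acting on elements of the cone $Conv$.

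For the inclusion ``$\subseteq$'', suppose $X\in Int^{KM}(\overline{X}_{\theta,p},\overline{X}_{\eta,q})$ and take $x\in X$, $y\in\Sigma(\overline{X})$ with $K(t,y;X_0,\overline{X}_{\eta,q})\leq K(t,x;X_0,\overline{X}_{\eta,q})$. By the reduction, this becomes $K(t,\phi_y;L^\infty,L^q_\eta)\lesssim K(t,\phi_x;L^\infty,L^q_\eta)$. The quasi-Banach couple $(L^\infty,L^q_\eta)$ possesses the uniform Calder\'on--Mityagin property, so there is $T\in\mathfrak{L}(L^\infty,L^q_\eta)$ with $T\phi_x=\phi_y$ and $\|T\|_{\mathfrak{L}(L^\infty,L^q_\eta)}\lesssim 1$. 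Since $0<\theta<\eta$, the Lions--Peetre reiteration gives $L^p_\theta=(L^\infty,L^q_\eta)_{\theta/\eta,p}\in Int(L^\infty,L^q_\eta)$, so $T$ extends boundedly to $L^p_\theta$ and hence $T\in\mathfrak{L}(L^p_\theta,L^q_\eta)$. The resulting operator $K$-estimate pulls back via reiteration to $K(t,y;\overline{X}_{\theta,p},\overline{X}_{\eta,q})\lesssim K(t,x;\overline{X}_{\theta,p},\overline{X}_{\eta,q})$, and the hypothesis on $X$ yields $y\in X$ with $\|y\|_X\lesssim\|x\|_X$. Membership in $Int^{KM}(\overline{X}_{\theta,p},X_1)$ follows symmetrically, using $L^q_\eta=(L^p_\theta,L^\infty(1/t))_{(\eta-\theta)/(1-\theta),q}\in Int(L^p_\theta,L^\infty(1/t))$.

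For the inclusion ``$\supseteq$'', assume $X$ is uniformly $K$-monotone with respect to both outer couples, and take $x\in X$ and $y$ with $K(t,y;\overline{X}_{\theta,p},\overline{X}_{\eta,q})\leq K(t,x;\overline{X}_{\theta,p},\overline{X}_{\eta,q})$. The strategy is to split $y=y_1+y_2$ so that $y_1$ satisfies the $K$-functional bound against $x$ for the couple $(X_0,\overline{X}_{\eta,q})$ and $y_2$ satisfies it for $(\overline{X}_{\theta,p},X_1)$; applying $K$-monotonicity in each outer couple then gives $y_1,y_2\in X$ with controlled norms, whence $y=y_1+y_2\in X$. The splitting is first constructed on the function side: by Holmstedt's formula, $K(\cdot,\phi_y;L^p_\theta,L^q_\eta)$ at parameter $t$ splits as the sum of an integral on $(0,\rho(t))$ and an integral on $(\rho(t),\infty)$ with break-point $\rho(t)=t^{1/(\eta-\theta)}$, and the middle-couple inequality decouples into separate estimates on these two pieces; after a change of break-points to $t^{1/\eta}$ and $t^{1/(1-\theta)}$, the two halves of $\phi_y$ are matched with the Holmstedt formulas for $(L^\infty,L^q_\eta)$ and $(L^p_\theta,L^\infty(1/t))$. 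This decomposition of $\phi_y\in Conv$ is then lifted to a decomposition $y=y_1+y_2$ in $\overline{X}$ using the quasi-Banach $K$-divisibility theorem \cite[Theorem~3.2.12]{BK91} already invoked in Proposition~\ref{parameter is a sum}.

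The principal expected obstacle lies in the ``$\supseteq$'' direction: aligning the three distinct Holmstedt break-points $t^{1/(\eta-\theta)}$, $t^{1/\eta}$, $t^{1/(1-\theta)}$ consistently, so that the two pieces $y_1,y_2$ obey the required $K$-inequalities against the corresponding pieces of $x$ uniformly in $t$, while tracking the multiplicative constants produced by $K$-divisibility and by the quasi-triangle inequalities in the ambient quasi-Banach setting. This is the quasi-Banach analogue of the Bykov--Ovchinnikov technique from \cite[Theorem~2.9]{BO06}, adapted to the class of uniformly $K$-monotone spaces rather than to all interpolation spaces.
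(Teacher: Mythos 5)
Your overall plan of reducing to function couples on $(0,\infty)$ via reiteration is sound, and it does match the paper's first step. But both directions of your argument have real problems.

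For the inclusion $\subseteq$, you invoke the uniform Calder\'on--Mityagin property of $(L^{\infty},L^{q}_{\eta})$ to produce an operator $T$ and then bound it on $L^{p}_{\theta}$. This is unjustified for $0<q<1$: the known CM theorems for weighted $L^{p}$ couples (Sparr, Cwikel) require both exponents at least $1$, and the paper itself stresses that CM can fail in the quasi-Banach range (e.g.\ $(l^{p},l^{q})$ with $p<q<1$). The operator detour is also unnecessary: since $L^{p}_{\theta}=(L^{\infty},L^{q}_{\eta})_{\theta/\eta,p}$, the reiteration equivalence $K(t,\phi;L^{p}_{\theta},L^{q}_{\eta})\cong K(t,K(\cdot,\phi;L^{\infty},L^{q}_{\eta});F_2,L^{\infty}(1/s))$ transfers the pointwise $K$-inequality directly, with no operator and no CM property --- this is exactly Lemma~\ref{L1}(b) in the paper. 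You also omit the intermediate step, carried out carefully in Lemma~\ref{L1}(b) via Proposition~\ref{parameter is a sum}, of verifying that $y$ actually lies in $\overline{X}_{\theta,p}+\overline{X}_{\eta,q}$ before applying $K$-monotonicity of $X$ in that couple.

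For the inclusion $\supseteq$, which is the substantial half, you only sketch a Holmstedt-type splitting with three different break-points and explicitly defer the alignment of those break-points and the tracking of constants as ``the principal expected obstacle.'' That obstacle is precisely the content that must be supplied, and it is here that the paper introduces its actual new ingredient: the problem is formulated as additivity of $K$-orbits (Definition~\ref{additive K-orbits} and Lemma~\ref{L2}), transferred to the model $Conv$-abundant couple $\overline{L^{\infty}}$ by Proposition~\ref{L4}, and then --- crucially --- reduced via the $(1/p)$-convexification machinery (Lemmas~\ref{L3}, \ref{L3a}, Proposition~\ref{K-functional for p-convexifications}) to the Banach range $pr,qr\ge 1$, where the additive-orbit statement is not re-proved but cited from \cite[Theorem~2.5]{BO06} for the Banach couple $(L^{\infty},L^{\infty}(t^{-1/r}))$, with CM used to pass from orbits to $K$-orbits. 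Your proposal contains neither the convexification reduction nor a worked-out quasi-Banach Holmstedt argument that could replace it, so as written the $\supseteq$ direction is not a proof but a statement of intent. A direct Holmstedt computation might well be made to work, but it would amount to redoing Bykov--Ovchinnikov's Banach argument in the quasi-Banach setting from scratch, whereas the paper's convexification step is exactly what lets one avoid that and piggyback on the existing Banach theorem.
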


Since the classes of $K$-monotone and interpolation spaces for couples with the uniform Calder\'{o}n-Mityagin property coincide, we immediately get the following result, extending thereby some results due to Bykov-Ovchinnikov \cite{BO06} (see Introduction) to the quasi-Banach case.

\begin{corollary}
\label{Ar-Cw2-cor} Suppose $\overline{X}=(X_{0},X_{1})$ is a mutually closed
quasi-Banach couple such that the couples $\left( \overline{X}_{\theta ,p},%
\overline{X}_{\eta ,q}\right) $, $\left( X_{0},\overline{X}_{\eta ,q}\right) 
$ and $\left( \overline{X}_{\theta ,p},X_{1}\right) $ have the uniform Calder\'{o}n-Mityagin property for all $0<\theta <\eta <1$ and $0<p,q\leq \infty $.
Then, 
\begin{equation}
Int\left( \overline{X}_{\theta ,p},\overline{X}_{\eta ,q}\right) =Int\left(
X_{0},\overline{X}_{\eta ,q}\right) \cap Int\left( \overline{X}_{\theta
,p},X_{1}\right).  \label{main formula2}
\end{equation}
\end{corollary}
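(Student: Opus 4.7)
The plan is to derive \eqref{main formula2} from \eqref{main formula1} by reducing $Int$ to $Int^{KM}$ on each of the three couples occurring in the statement. Under the uniform Calder\'{o}n--Mityagin hypothesis, the classes of interpolation and uniformly $K$-monotone spaces coincide for the relevant couples, so the equality produced by Theorem \ref{Ar-Cw1} transfers verbatim to an equality about $Int$.

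First I would establish the following general observation: if $\overline{Y}=(Y_{0},Y_{1})$ is a quasi-Banach couple with the uniform Calder\'{o}n--Mityagin property, then $Int(\overline{Y})=Int^{KM}(\overline{Y})$. The inclusion $Int^{KM}(\overline{Y})\subseteq Int(\overline{Y})$ is immediate from the definitions in Section \ref{Prel}. For the reverse inclusion, fix $Z\in Int(\overline{Y})$ with interpolation constant $C_{Z}$, and let $x\in Z$ and $y\in \Sigma(\overline{Y})$ satisfy $K(t,y;\overline{Y})\leq K(t,x;\overline{Y})$ for all $t>0$. Then $\Vert y\Vert _{{\mathrm{Orb}}^{K}(x;\overline{Y})}\leq 1$, so the uniform Calder\'{o}n--Mityagin property furnishes an operator $T\in \mathfrak{L}(\overline{Y})$ with $Tx=y$ and $\Vert T\Vert _{\mathfrak{L}(\overline{Y})}\leq C$, where $C$ depends only on the couple. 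Since $Z$ is an interpolation space with respect to $\overline{Y}$, one gets $y=Tx\in Z$ together with the bound $\Vert y\Vert _{Z}\leq C_{Z}\Vert T\Vert _{\mathfrak{L}(\overline{Y})}\Vert x\Vert _{Z}\leq CC_{Z}\Vert x\Vert _{Z}$, which shows $Z\in Int^{KM}(\overline{Y})$ with $K$-monotonicity constant at most $CC_{Z}$.

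Next, I would apply this observation separately to each of the three couples $(\overline{X}_{\theta ,p},\overline{X}_{\eta ,q})$, $(X_{0},\overline{X}_{\eta ,q})$ and $(\overline{X}_{\theta ,p},X_{1})$ --- all assumed to have the uniform Calder\'{o}n--Mityagin property --- to conclude that $Int$ equals $Int^{KM}$ on each of them. Since $\overline{X}$ is mutually closed, Theorem \ref{Ar-Cw1} applies and yields \eqref{main formula1}, which in combination with the three equalities just obtained is exactly \eqref{main formula2}. No substantive obstacle arises here: the real work is entirely contained in Theorem \ref{Ar-Cw1}, and the passage from $Int^{KM}$ to $Int$ is a routine unpacking of the definition of the uniform Calder\'{o}n--Mityagin property in terms of the $K$-orbit.
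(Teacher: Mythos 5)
Your proof is correct and follows the same route as the paper. The paper's proof is the single sentence preceding the corollary, asserting that under the uniform Calder\'{o}n--Mityagin hypothesis the classes of interpolation spaces and uniformly $K$-monotone spaces coincide, so that \eqref{main formula1} becomes \eqref{main formula2}; you have simply spelled out that assertion in detail (and correctly) by deriving $Int(\overline{Y})=Int^{KM}(\overline{Y})$ from the orbit formulation of the uniform Calder\'{o}n--Mityagin property together with the interpolation constant of $Z$.
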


\begin{remark}
\label{rem-Ar-Cw2} In the Banach case the conditions of the last corollary
may be relaxed (see, for instance, \cite[Theorem 1]{Cwikel1},\cite[Theorem 2]%
{DmOv79},\cite[Theorem 4.17]{Nil83} and \cite[Theorem 4.4.18]{BK91}). In
particular, \eqref{main formula2} holds for every Banach couple $\overline{X}%
=(X_{0},X_{1})$ with the uniform Calder\'{o}n-Mityagin property for all $%
0<\theta <\eta <1$ and $1\leq p,q\leq \infty $.
\end{remark}

It seems to be unknown by now whether the classes of $K$-monotone and $K$-spaces coincide in the quasi-Banach setting. At the same time, it is well known that this coincidence holds in the Banach case \cite[Theorem~4.1.11]{BK91} (see also \cite[Corollary~4.3]{Nil83}). 
Therefore, from Theorem \ref{Ar-Cw1} it follows 

\begin{corollary}
\label{Ar-Cw1-cor} If $\overline{X}=(X_{0},X_{1}) $ is an arbitrary mutually
closed Banach couple, then 
\begin{equation*}
Int^{K}\left( \overline{X}_{\theta ,p},\overline{X}_{\eta ,q}\right)
=Int^{K}\left( X_{0},\overline{X}_{\eta ,q}\right) \cap Int^{K}\left( 
\overline{X}_{\theta ,p},X_{1}\right)
\end{equation*}
for all $0<\theta <\eta <1$ and $0<p,q\leq \infty $.
\end{corollary}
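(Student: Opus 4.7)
The plan is to derive Corollary \ref{Ar-Cw1-cor} from Theorem \ref{Ar-Cw1} by converting every $Int^{KM}$ on both sides into $Int^{K}$, using the Brudnyi-Krugljak identification $Int^{K}(\overline{Y})=Int^{KM}(\overline{Y})$ that holds for every Banach couple $\overline{Y}$ \cite[Theorem~4.1.11]{BK91}. Since the Banach couple $\overline{X}$ is a fortiori a mutually closed quasi-Banach couple, Theorem \ref{Ar-Cw1} applied to $\overline{X}$ gives
$$Int^{KM}\left(\overline{X}_{\theta,p},\overline{X}_{\eta,q}\right) = Int^{KM}\left(X_{0},\overline{X}_{\eta,q}\right)\cap Int^{KM}\left(\overline{X}_{\theta,p},X_{1}\right),$$
and everything reduces to establishing $Int^{K}(\overline{Y})=Int^{KM}(\overline{Y})$ for each of the three couples $\overline{Y}$ appearing here.

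In the Banach range $1\le p,q\le\infty$ every Lions-Peetre space $\overline{X}_{\alpha,r}$ is itself Banach, so all three couples $\overline{Y}$ are Banach couples and \cite[Theorem~4.1.11]{BK91} applies directly to each. For the remaining quasi-Banach range ($p<1$ or $q<1$), I would reduce back to the Banach couple $\overline{X}$. Given $X\in Int^{KM}(\overline{Y})$, the reiteration formula \eqref{reiteration1} transfers pointwise comparison of $K$-functionals with respect to $\overline{X}$ into the analogous comparison with respect to $\overline{Y}$ (since the outer $K$-functional is monotone in its second argument), so that $X$ is uniformly $K$-monotone with respect to the Banach couple $\overline{X}$ as well. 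Then \cite[Theorem~4.1.11]{BK91} applied to $\overline{X}$ produces a Banach parameter $F\in Int(\overline{L^\infty})$ with $X\cong\overline{X}_{F:K}$, and the reverse direction of the reiteration identity \eqref{reiteration} reduces the desired conclusion to a factorization $F=(E_\theta,E_\eta)_{G:K}$, where $E_\theta=L^p(t^{-\theta},dt/t)$ and $E_\eta=L^q(t^{-\eta},dt/t)$. Such a factorization is obtained by applying the same reasoning one level lower: the $K$-monotonicity of $X$ transfers, via another application of \eqref{reiteration1}, into $K$-monotonicity of $F$ with respect to $(E_\theta,E_\eta)$; after an $r$-convexification (Lemma \ref{renorming} together with Proposition \ref{K-functional for p-convexifications}) the weighted $L^p$-couple becomes Banach, and a final application of \cite[Theorem~4.1.11]{BK91} furnishes the parameter $G$.

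The main obstacle will be this last "level-down" step when $p$ or $q$ lies in $(0,1)$: one must carefully track $K$-monotonicity constants both through the reiteration identity and through the convexification procedure, since the parameter couple $(E_\theta,E_\eta)$ is itself only quasi-Banach and \cite[Theorem~4.1.11]{BK91} is not directly available there. Once this has been done for the outer couple $(\overline{X}_{\theta,p},\overline{X}_{\eta,q})$, exactly the same argument, with either $E_\theta$ or $E_\eta$ replaced by $L^\infty$ or $L^\infty(1/t)$ respectively, handles the two one-sided couples $(X_0,\overline{X}_{\eta,q})$ and $(\overline{X}_{\theta,p},X_1)$.
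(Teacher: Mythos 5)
Your strategy matches the paper's: apply Theorem \ref{Ar-Cw1} to obtain the $Int^{KM}$ identity and then convert $Int^{KM}$ into $Int^{K}$ using the Banach-case coincidence of the two classes from \cite[Theorem~4.1.11]{BK91}. The paper presents the corollary as an immediate consequence and does not comment on the range $p<1$ or $q<1$ at all, so in that sense your reduction for $1\le p,q\le\infty$ is exactly the paper's argument. Where you go further is in flagging that for $p<1$ or $q<1$ the couples $\bigl(\overline{X}_{\theta,p},\overline{X}_{\eta,q}\bigr)$, $\bigl(X_0,\overline{X}_{\eta,q}\bigr)$, $\bigl(\overline{X}_{\theta,p},X_1\bigr)$ need not be Banach couples, so that \cite[Theorem~4.1.11]{BK91} cannot be invoked off the shelf for them; this concern is legitimate (for instance, already $\overline{L^{\infty}}_{\theta,p}$ fails to be locally convex when $p<1$, as one sees by summing suitable translated bumps of the form $e^{-i(1-\theta)}\min(t,e^{i})$), and the paper does not record how this is to be overcome.

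However, the workaround you sketch has a genuine gap that you yourself acknowledge. After reducing to $X\cong\overline{X}_{F:K}$ via the Banach coincidence for $\overline{X}$, the step ``the $K$-monotonicity of $X$ transfers into $K$-monotonicity of $F$ with respect to $(E_\theta,E_\eta)$'' is not automatic: it presupposes that every relevant element of $E_\theta+E_\eta$ is realized, up to equivalence, as a $K$-functional $K(\cdot,x;\overline{X})$, i.e.\ some form of abundance of $\overline{X}$, which an arbitrary mutually closed Banach couple need not have (e.g.\ $\overline{X}=(X,X)$ realizes only multiples of $\min(1,t)$). Moreover, even granting that step, the convexification supplies a parameter for the Banach couple $\bigl((E_\theta)^{(r)},(E_\eta)^{(r)}\bigr)$, and de-convexifying it to a parameter $G$ for $(E_\theta,E_\eta)$ with $F=(E_\theta,E_\eta)_{G:K}$ is an additional construction you do not carry out. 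So, as written, the proposal leaves the essential quasi-Banach case open; the issue is not just ``tracking constants'' but supplying the abundance and de-convexification arguments. Your diagnosis of where extra work is needed is correct and is itself a useful observation about the terseness of the paper's treatment, but the proof as proposed is incomplete for $p<1$ or $q<1$.
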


To prove the above results we need some additional notions and auxiliary
assertions.

\begin{lemma}
\label{add} Let $E$ and $F$ be quasi-Banach function lattices such that $%
E,F\in Int\left( \overline{L^{\infty }}\right) $. Then, the following
continuous embeddings hold: 
\begin{eqnarray*}
\left( L^{\infty }+F\right) \cap \left( E+L^{\infty }\left( 1/t\right)
\right) &\subset &E+F \\
\left( L^{\infty }\cap F\right) +\left( E\cap L^{\infty }\left( 1/t\right)
\right) &\supset &E\cap F.
\end{eqnarray*}
\end{lemma}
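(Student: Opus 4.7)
My plan is to prove both embeddings by elementary explicit decompositions, using only two ingredients from the hypothesis $E, F \in Int(\overline{L^{\infty}})$: the universal continuous inclusions $L^{\infty}\cap L^{\infty}(1/t) = \Delta(\overline{L^{\infty}}) \subset E\cap F$ and $E + F \subset L^{\infty} + L^{\infty}(1/t) = \Sigma(\overline{L^{\infty}})$ that follow from $E,F$ being intermediate with respect to $\overline{L^{\infty}}$, together with the identification of $K(\cdot, \cdot; \overline{L^{\infty}})$ with the least concave majorant of $|\cdot|$ recalled in Section~\ref{Prel3}.

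For the first embedding I would cut by support at $t = 1$. Fix $f$ in the left-hand side with representations $f = g + \phi = \psi + h$, where $g\in L^{\infty}$, $\phi\in F$, $\psi\in E$, $h\in L^{\infty}(1/t)$, and write $f = f\chi_{(0,1]} + f\chi_{(1,\infty)}$. In $f\chi_{(1,\infty)} = g\chi_{(1,\infty)} + \phi\chi_{(1,\infty)}$, the second summand is in $F$ by the lattice property, while the first satisfies $|g\chi_{(1,\infty)}(s)| \leq \|g\|_{\infty}$ and, since $s \geq 1$ on the support, also $|g\chi_{(1,\infty)}(s)|/s \leq \|g\|_{\infty}$, so it lies in $L^{\infty}\cap L^{\infty}(1/t) \subset F$. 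Symmetrically, using $f = \psi + h$, the term $h\chi_{(0,1]}$ satisfies $|h\chi_{(0,1]}(s)| \leq \|h\|_{L^{\infty}(1/t)}\cdot s \leq \|h\|_{L^{\infty}(1/t)}$ on $(0,1]$, so $h\chi_{(0,1]} \in L^{\infty}\cap L^{\infty}(1/t) \subset E$, and hence $f\chi_{(0,1]}\in E$.

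For the second embedding I would cut by value. Given $f\in E\cap F$, set $M := K(1,f;\overline{L^{\infty}}) = \|f\|_{\Sigma(\overline{L^{\infty}})}$. Since $K(\cdot, f; \overline{L^{\infty}})$ is concave, non-decreasing, and pointwise $\geq |f|$, concavity forces $K(s, f;\overline{L^{\infty}}) \leq M \max(1,s)$, so $|f(s)| \leq M \max(1,s)$ a.e. Setting $a := \mathrm{sign}(f)\min(|f|,M)$ and $b := f - a$, one has $|a|\leq M$ and $|a|\leq|f|$, giving $a \in L^{\infty}\cap F$; while $|b(s)| = \max(|f(s)| - M,0) \leq Ms$ together with $|b|\leq|f|$ give $b\in L^{\infty}(1/t)\cap E$. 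Thus $f = a + b$ exhibits the desired decomposition.

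The argument is entirely elementary; no step presents a genuine obstacle. The only care required is in tracking the norm constants to upgrade the set-theoretic inclusions to continuous embeddings, which comes from the continuous-embedding constants of $\Delta(\overline{L^{\infty}}) \subset E, F$ and from $M \lesssim \|f\|_{E\cap F}$, both ultimately consequences of $E, F \in Int(\overline{L^{\infty}})$.
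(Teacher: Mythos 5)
Your proof is correct. For the first embedding your argument is essentially identical to the paper's: cut the function by support at $t=1$, use the representation $f=g+\phi$ with $g\in L^{\infty},\phi\in F$ to place $f\chi_{(1,\infty)}$ in $F$ (noting $g\chi_{(1,\infty)}\in\Delta(\overline{L^{\infty}})\subset F$), and symmetrically use $f=\psi+h$ to place $f\chi_{(0,1]}$ in $E$. For the second embedding the paper only says it is ``similar,'' which would most naturally mean the same support cut at $t=1$: since $f\in E\cap F\subset\Sigma(\overline{L^{\infty}})$ gives $|f(s)|\leq M\max(1,s)$ a.e.\ with $M=K(1,f;\overline{L^{\infty}})$, one has $f\chi_{(0,1]}\in L^{\infty}\cap F$ and $f\chi_{(1,\infty)}\in E\cap L^{\infty}(1/t)$ directly. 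You instead truncate by value at level $M$, writing $a=\mathrm{sign}(f)\min(|f|,M)$, $b=f-a$. Both decompositions are valid and produce the same $\preceq$ norm bounds (driven by $M\preceq\|f\|_{E\cap F}$ and the lattice domination $|a|,|b|\leq|f|$), so the difference is cosmetic; the value cut buys nothing over the support cut here, but it does no harm either.
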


\begin{proof}
Suppose $x\in \left( L^{\infty }+F\right) \cap \left( E+L^{\infty }\left(
1/t\right) \right) $. Then, on the one hand, $x=x_{0}+x_{1}$, where $%
x_{0}\in L^{\infty }$ and $x_{1}\in F$. Since $F\in Int\left( \overline{%
L^{\infty }}\right) $, we have $x\chi _{\lbrack 1,\infty )}=x_{0}\chi
_{\lbrack 1,\infty )}+x_{1}\chi _{\lbrack 1,\infty )}\in F$ and $\Vert x\chi
_{\lbrack 1,\infty )}\Vert _{F}\preceq \Vert x\Vert _{L^{\infty }+F}$. On
the other hand, $x=x_{0}^{\prime }+x_{1}^{\prime }$, where $x_{0}^{\prime
}\in E$ and $x_{1}^{\prime }\in L^{\infty }(1/t)$. As above, we get $x\chi
_{\lbrack 0,1]}=x_{0}^{\prime }\chi _{\lbrack 0,1]}+x_{1}^{\prime }\chi
_{\lbrack 0,1]}\in E$ and $\Vert x\chi _{\lbrack 0,1]}\Vert _{E}\preceq
\Vert x\Vert _{E+L^{\infty }(1/t)}$. Consequently, $x=x\chi _{\lbrack
0,1]}+x\chi _{\lbrack 1,\infty )}\in E+F$ and $\Vert x\Vert _{E+F}\preceq
\Vert x\Vert _{\left( L^{\infty }+F\right) \cap \left( E+L^{\infty }\left(
1/t\right) \right) }$.

Since the second embedding can be proved quite similarly, we skip its proof.
\end{proof}

Let $\overline{X}=(X_{0},X_{1})$ be a mutually closed quasi-Banach couple
and let $X_{2}$, $X_{3}$ be quasi-Banach spaces such that $X_{2}\in
Int^{K}\left( X_{0},X_{3}\right) $, $X_{3}\in Int^{K}\left(
X_{2},X_{1}\right) $ and both couples $(X_{0},X_{3})$ and $(X_{2},X_{1}) $
are mutually closed. We will call such a collection $\{\overline{X}%
,X_{2},X_{3}\}$ \textit{admissible}.

\vskip0.2cm

\begin{remark}
\label{remark-admissibility} Further, the following sufficient condition for
the admissibility of a collection $\{\overline{X},X_{2},X_{3}\}$ will be
useful.

Assume that $\overline{X}=(X_{0},X_{1})$ is a mutually closed quasi-Banach
couple and $X_{2}$, $X_{3}$ are quasi-Banach spaces such that $X_{2}\in
Int^{K}\left( X_{0},X_{3}\right) $, $X_{3}\in Int^{K}\left(
X_{2},X_{1}\right) $. Then, by the reiteration theorem for the $K$-method
(see \eqref{reiteration} or \cite[Theorem~3.3.11]{BK91}), $X_{2},X_{3}\in
Int^{K}\left( X_{0},X_{1}\right) $ and hence there are parameters $%
E_{2},E_{3}\in Int\left( \overline{L^{\infty }}\right) $ with $X_{2}=\left(
X_{0},X_{1}\right) _{E_{2}:K},X_{3}=\left( X_{0},X_{1}\right) _{E_{3};K}$.
Moreover, by the hypothesis, $X_{0}=\left( X_{0},X_{1}\right) _{L^{\infty
}:K}$ and $X_{1}=\left( X_{0},X_{1}\right) _{L^{\infty}(1/t);K}$. Then,
applying Proposition \ref{Lemma-admissibility-1} to the couples $%
(X_{0},X_{3})$ and $(X_{2},X_{1})$, we conclude that they are mutually
closed. Hence, the collection $\{\overline{X},X_{2},X_{3}\}$ is admissible.
\end{remark}

\vskip0.2cm

Recall that $I\left( X,Y\right) $ denotes the set of all intermediate spaces
with respect to a quasi-Banach couple $(X,Y)$.

\vskip0.2cm

\begin{lemma}
\label{Lemma-transfer-admissbile} Let $\overline{X}=(X_0,X_1)$ be a
quasi-Banach couple and $\{\overline{X},X_{2},X_{3}\}$ be an admissible
collection. Then, we have 
\begin{equation*}
I\left( X_{2},X_{3}\right) =I\left( X_{0},X_{3}\right) \cap I\left(
X_{2},X_{1}\right).
\end{equation*}
\end{lemma}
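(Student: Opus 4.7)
The plan is to prove the two inclusions separately. The inclusion $I(X_{2},X_{3})\subset I(X_{0},X_{3})\cap I(X_{2},X_{1})$ is immediate from the admissibility hypothesis. Since $X_{2}$ is intermediate for $(X_{0},X_{3})$ and $X_{3}$ is intermediate for $(X_{2},X_{1})$, we have $X_{0}\cap X_{3}\subset X_{2}\subset X_{0}+X_{3}$ and $X_{2}\cap X_{1}\subset X_{3}\subset X_{2}+X_{1}$, whence the continuous inclusions
\begin{equation*}
X_{0}\cap X_{3}\subset X_{2}\cap X_{3},\quad X_{2}+X_{3}\subset X_{0}+X_{3},\quad X_{2}\cap X_{1}\subset X_{2}\cap X_{3},\quad X_{2}+X_{3}\subset X_{2}+X_{1}.
\end{equation*}
Any $Z\in I(X_{2},X_{3})$ therefore automatically lies in both $I(X_{0},X_{3})$ and $I(X_{2},X_{1})$.

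For the reverse inclusion I would pass to a parameter representation via the reiteration theorem. By Remark~\ref{remark-admissibility}, admissibility gives $X_{2},X_{3}\in Int^{K}(\overline{X})$, so there exist $E_{2},E_{3}\in Int(\overline{L^{\infty}})$ with $X_{2}\cong \overline{X}_{E_{2}:K}$ and $X_{3}\cong \overline{X}_{E_{3}:K}$. Mutual closedness of $\overline{X}$ further yields $X_{0}\cong \overline{X}_{L^{\infty}:K}$ and $X_{1}\cong \overline{X}_{L^{\infty}(1/t):K}$. For any $E,F\in Int(\overline{L^{\infty}})$ the identity $\overline{X}_{E:K}\cap \overline{X}_{F:K}\cong \overline{X}_{(E\cap F):K}$ is immediate from the definition of the $K$-functional, while the analogous statement for sums is precisely Proposition~\ref{parameter is a sum}.

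Now fix $Z\in I(X_{0},X_{3})\cap I(X_{2},X_{1})$. To establish $Z\subset X_{2}+X_{3}$, combine $Z\subset (X_{0}+X_{3})\cap (X_{2}+X_{1})$ with Proposition~\ref{parameter is a sum} to identify
\begin{equation*}
(X_{0}+X_{3})\cap (X_{2}+X_{1})\cong \overline{X}_{\left((L^{\infty}+E_{3})\cap (E_{2}+L^{\infty}(1/t))\right):K};
\end{equation*}
the first embedding of Lemma~\ref{add}, monotonicity of the $K$-functor in the parameter, and a further application of Proposition~\ref{parameter is a sum} then yield the continuous inclusion into $\overline{X}_{(E_{2}+E_{3}):K}=X_{2}+X_{3}$. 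Dually, for $X_{2}\cap X_{3}\subset Z$, start from $X_{2}\cap X_{3}\cong \overline{X}_{(E_{2}\cap E_{3}):K}$ and invoke the second embedding of Lemma~\ref{add} together with Proposition~\ref{parameter is a sum} to obtain
\begin{equation*}
X_{2}\cap X_{3}\subset \overline{X}_{\left((L^{\infty}\cap E_{3})+(E_{2}\cap L^{\infty}(1/t))\right):K}\cong (X_{0}\cap X_{3})+(X_{2}\cap X_{1}),
\end{equation*}
and the right-hand side is contained in $Z$ because each of its summands is, by assumption on $Z$.

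The argument is essentially a clean assembly of earlier ingredients rather than a genuinely new computation, and the only subtle point is the bookkeeping: one must check that the composite parameters $L^{\infty}\cap E_{3}$, $E_{2}\cap L^{\infty}(1/t)$, together with their sum $(L^{\infty}\cap E_{3})+(E_{2}\cap L^{\infty}(1/t))$ and their ``dual'' intersection $(L^{\infty}+E_{3})\cap (E_{2}+L^{\infty}(1/t))$ all belong to $Int(\overline{L^{\infty}})$ so that Proposition~\ref{parameter is a sum} is applicable. This is routine, since sums and intersections of interpolation parameters for a fixed couple remain interpolation parameters. The essential role of the admissibility hypothesis—specifically the mutual closedness of $\overline{X}$ combined with $X_{2}\in Int^{K}(X_{0},X_{3})$ and $X_{3}\in Int^{K}(X_{2},X_{1})$—is to supply, via Remark~\ref{remark-admissibility}, a single pair $(E_{2},E_{3})$ simultaneously representing both $X_{2}$ and $X_{3}$ as $K$-spaces over $\overline{X}$; without this, the parameter-level manipulations above would have no common ambient couple.
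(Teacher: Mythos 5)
Your proposal is correct and follows essentially the same route as the paper: the forward inclusion is immediate from intermediacy of $X_2$ and $X_3$, and the reverse inclusion passes to the parameter level via Remark~\ref{remark-admissibility}, the reiteration representation, Proposition~\ref{parameter is a sum}, and both embeddings of Lemma~\ref{add}. The only cosmetic difference is that you invoke mutual closedness of $\overline{X}$ explicitly up front to write $X_0\cong\overline{X}_{L^\infty:K}$, whereas the paper first uses only the trivial inclusion $X_0\subseteq\overline{X}_{L^\infty:K}$ for one direction and the equality (implicitly) for the other; this changes nothing substantive.
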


\begin{proof}
Suppose first $X\in I\left( X_{2},X_{3}\right) $, i.e., $X_{2}\cap
X_{3}\subseteq X\subseteq X_{2}+X_{3}.$ Since $X_{2}\in Int\left(
X_{0},X_{3}\right) $, it follows that $X_{0}\cap X_{3}\subseteq X_2\subseteq
X_{0}+X_{3}$. Hence, $X_{0}\cap X_{3}\subseteq X\subseteq X_{0}+X_{3}$,
which means that $X\in I\left(X_{0},X_{3}\right) $. In the same way, $X\in
I\left(X_{2},X_{1}\right)$.

Conversely, assume that $X\in I\left( X_{0},X_{3}\right) \cap I\left(
X_{2},X_{1}\right) $, i.e., $X_{0}\cap X_{3}\subseteq X\subseteq X_{0}+X_{3}$
and $X_{2}\cap X_{1}\subseteq X\subseteq X_{2}+X_{1}.$ Then, in particular, 
\begin{equation*}
X\subseteq \left( X_{0}+X_{3}\right) \cap \left( X_{2}+X_{1}\right) .
\end{equation*}%
It is clear also that $X_{0}\subseteq \overline{X}_{L^{\infty }:K}$, $%
X_{1}\subseteq $ $\overline{X}_{L^{\infty }\left( 1/t\right) :K}$. Combining
this with the equalities $X_{2}=\overline{X}_{E_{2}:K}$, $X_{3}=\overline{X}%
_{E_{3};K}$, by Proposition \ref{parameter is a sum}, we conclude 
\begin{equation*}
\left( X_{0}+X_{3}\right) \cap \left( X_{2}+X_{1}\right) \subseteq \overline{%
X}_{L^{\infty }+E_{3}:K}\cap \overline{X}_{E_{2}+L^{\infty }\left(
1/t\right) :K}=\overline{X}_{E:K},
\end{equation*}%
where $E:=\left( L^{\infty }+E_{3}\right) \cap \left( E_{2}+L^{\infty
}\left( 1/t\right) \right) $. On the other hand, by Lemma \ref{add}, $%
E\subset E_{2}+E_{3}$, and so from Proposition \ref{parameter is a sum} it
follows 
\begin{equation*}
\overline{X}_{E:K}\subset \overline{X}_{E_{2}+E_{3}:K}=\overline{X}%
_{E_{2}:K}+\overline{X}_{E_{3}:K}=X_{2}+X_{3}.
\end{equation*}%
As a result, the last embeddings yield $X\subseteq X_{2}+X_{3}.$

Similarly, 
\begin{equation*}
X\supset X_{0}\cap X_{3}+X_{2}\cap X_{1}=\overline{X}_{E^{^{\prime }}:K},
\end{equation*}%
where $E^{^{\prime }}:=\left( L^{\infty }\cap E_{3}\right) +\left( E_{2}\cap
L^{\infty }\left( 1/t\right) \right) $. Again applying Lemma \ref{add}, we
get 
\begin{equation*}
\overline{X}_{E^{^{\prime }}:K}\supset \overline{X}_{E_{2}\cap E_{3}:K}=%
\overline{X}_{E_{2}:K}\cap \overline{X}_{E_{3}:K}=X_{2}\cap X_{3}
\end{equation*}%
and hence $X_{2}\cap X_{3}\subseteq X$. Thus, $X\in I\left(
X_{2},X_{3}\right) $, which completes the proof.
\end{proof}

\begin{lemma}
\label{L1} Let $\{\overline{X},X_{2},X_{3}\}$ be an admissible collection.
The following embeddings hold:

$\left( a\right) $ $Int^{K}\left( X_{2},X_{3}\right) \subseteq Int^{K}\left(
X_{0},X_{3}\right) \cap Int^{K}\left( X_{2},X_{1}\right) $;

$\left( b\right) $ $Int^{KM}\left( X_{2},X_{3}\right) \subseteq
Int^{KM}\left( X_{0},X_{3}\right) \cap Int^{KM}\left( X_{2},X_{1}\right) $.
\end{lemma}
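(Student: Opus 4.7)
The plan is to use the reiteration identity \eqref{reiteration1} with the couple $(X_0, X_3)$ (respectively $(X_2, X_1)$) playing the role of the inner couple. Admissibility gives $X_2 \in Int^K(X_0, X_3)$, i.e.\ $X_2 = (X_0, X_3)_{F_2:K}$ for some $F_2 \in Int(\overline{L^\infty})$; combined with the mutual closedness of $(X_0, X_3)$ this yields $X_3 = (X_0, X_3)_{L^\infty(1/t):K}$. An analogous pair of representations holds for $(X_2, X_1)$. By symmetry, it suffices to handle the embeddings into $Int^K(X_0, X_3)$ and $Int^{KM}(X_0, X_3)$.

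For (a), given $X = (X_2, X_3)_{F:K}$ with $F \in Int(\overline{L^\infty})$, the reiteration identity \eqref{reiteration} applied to the $K$-spaces above gives $X = (X_0, X_3)_{G:K}$ with $G = (F_2, L^\infty(1/t))_{F:K}$. Since $F_2, L^\infty(1/t) \in Int(\overline{L^\infty})$, the parameter $G$ lies in $Int(\overline{L^\infty})$, so $X \in Int^K(X_0, X_3)$.

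For (b), fix $X \in Int^{KM}(X_2, X_3)$ with $K$-monotonicity constant $C_0$, and let $x \in X$, $y \in X_0 + X_3$ with $K(\cdot, y; X_0, X_3) \le K(\cdot, x; X_0, X_3)$ on $(0, \infty)$. Write $\psi_z := K(\cdot, z; X_0, X_3)$, so $0 \le \psi_y \le \psi_x$ pointwise. Since $x \in X \subseteq X_2 + X_3$, Proposition \ref{parameter is a sum} applied to the inner couple $(X_0, X_3)$ identifies $X_2 + X_3 = (X_0, X_3)_{F_2 + L^\infty(1/t):K}$, so $\psi_x \in F_2 + L^\infty(1/t)$, and the lattice ideal property forces $\psi_y \in F_2 + L^\infty(1/t)$; hence $y \in X_2 + X_3$. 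The reiteration formula \eqref{reiteration1} now yields
\[
K(t, z; X_2, X_3) \cong K(t, \psi_z; F_2, L^\infty(1/t)), \qquad z \in \{x, y\}, \; t > 0.
\]
Since $(F_2, L^\infty(1/t))$ is a quasi-Banach lattice couple, the decomposition property packaged in Lemma \ref{Prop-K-functional} shows that the $K$-functional respects pointwise order: $0 \le \psi_y \le \psi_x$ implies $K(t, \psi_y; F_2, L^\infty(1/t)) \le K(t, \psi_x; F_2, L^\infty(1/t))$ for every $t > 0$. Combining the last two displays yields a constant $C$ with $K(t, y; X_2, X_3) \le C\, K(t, x; X_2, X_3)$ for all $t > 0$, and the $K$-monotonicity of $X$ with respect to $(X_2, X_3)$ then gives $y \in X$ with $\|y\|_X \le C_0 C \|x\|_X$. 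The argument for $Int^{KM}(X_2, X_1)$ is obtained by exchanging the roles of the endpoints.

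The main obstacle is converting the hypothesis, which is a pointwise domination of $K$-functionals with respect to $(X_0, X_3)$, into a pointwise domination of $K$-functionals with respect to $(X_2, X_3)$. Working with the inner couple $(X_0, X_3)$ reduces this, via reiteration, to a statement about two lattice couples of parameter spaces, where the decomposition property makes the transfer immediate; the auxiliary point of verifying that $y$ actually lives in $X_2 + X_3$ is handled cleanly by Proposition \ref{parameter is a sum}.
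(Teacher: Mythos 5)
Your proof is correct and follows essentially the same route as the paper: rewrite $(X_2,X_3)$ via reiteration over the parameter couple $(F_2,L^\infty(1/t))$ sitting inside $(X_0,X_3)$, then for part (b) transfer the pointwise $K$-functional domination through the lattice monotonicity of the $K$-functional on the parameter couple (Lemma~\ref{Prop-K-functional}), together with Proposition~\ref{parameter is a sum} to place $y$ in $X_2+X_3$. The one small omission is that you never verify that $X$ is actually an \emph{intermediate} space for $(X_0,X_3)$ and $(X_2,X_1)$, which is part of the definition of membership in $Int^K$ or $Int^{KM}$; the paper dispatches this at the outset via Lemma~\ref{Lemma-transfer-admissbile} (equivalently, note $X_0\cap X_3\subset X_2\cap X_3\subset X\subset X_2+X_3\subset X_0+X_3$ using $X_2\in I(X_0,X_3)$ and $X_3\in I(X_0,X_3)$).
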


\begin{proof}
Observe first that, thanks to Lemma \ref{Lemma-transfer-admissbile}, either
of the conditions $X\in Int^{K}\left( X_{2},X_{3}\right) $ and $X\in
Int^{KM}\left( X_{2},X_{3}\right) $ implies that 
\begin{equation*}
X\in I\left( X_{0},X_{3}\right) \cap I\left( X_{2},X_{1}\right) .
\end{equation*}

$\left( a\right) $ Suppose $X\in Int^{K}\left( X_{2},X_{3}\right) $, i.e., $%
X=\left( X_{2},X_{3}\right) _{E:K}$ for some $E\in Int\left( \overline{%
L^{\infty }}\right) $. According to the definition of an admissible
collection, $X_3=\left( X_{0},X_{3}\right) _{L^{\infty }(1/t):K}$ and there
are $F_{2},F_{3}\in Int\left( \overline{L^{\infty }} \right) $ such that $%
X_{2}=\left( X_{0},X_{3}\right) _{F_{2}:K},X_{3}=\left(X_{2},X_{1}\right)
_{F_{3}:K}$. Therefore, again by the reiteration theorem (see %
\eqref{reiteration1} or \cite[Theorem~3.3.11]{BK91}), with constants
independent of $x\in X_{2}+X_{3}$, we have 
\begin{equation}
K\left( t,x;X_{2},X_{3}\right) \cong K\left( t,K\left( \cdot
,x;X_{0},X_{3}\right) ;F_{2},L^{\infty }(1/t)\right) ,\;\;t>0.
\label{reiter1}
\end{equation}%
Consequently, $X=\left( X_{0},X_{3}\right) _{E_{X}:K}$, where $E_{X}=\left(
F_{2},L^{\infty }(1/t)\right) _{E:K}.$ Hence, $X\in Int^{K}\left(
X_{0},X_{3}\right) $. In the same way, $X\in Int^{K}\left(
X_{2},X_{1}\right) $.

$\left( b\right) $ Let $X\in Int^{KM}\left( X_{2},X_{3}\right) $. Suppose
that $x\in X$ and $y\in X_{0}+X_{3}$ satisfy 
\begin{equation*}
K\left( t,y;X_{0},X_{3}\right) \leq K\left( t,x;X_{0},X_{3}\right) ,\;\;t>0.
\end{equation*}%
Then, by \eqref{reiter1}, 
\begin{equation*}
K\left( 1,K\left( \cdot ,y;X_{0},X_{3}\right) ;F_{2},L^{\infty }(1/t)\right)
\leq CK\left( 1,K\left( \cdot ,x;X_{0},X_{3}\right) ;F_{2},L^{\infty
}(1/t)\right) <\infty ,
\end{equation*}%
that is, 
\begin{equation*}
K\left( \cdot ,y;X_{0},X_{3}\right) \in F_{2}+L^{\infty }(1/t).
\end{equation*}%
Thus, by Proposition \ref{parameter is a sum}, we obtain 
\begin{equation*}
y\in \left( X_{0},X_{3}\right) _{F_{2}+L^{\infty }(1/t):K}=\left(
X_{0},X_{3}\right) _{F_{2}:K}+\left( X_{0},X_{3}\right) _{L^{\infty
}(1/t):K}=X_{2}+{X}_{3}
\end{equation*}%
and hence from \eqref{reiter1} and similar equivalence for $y$ we infer 
\begin{equation*}
K\left( t,y;X_{2},X_{3}\right) \preceq K\left( t,x;X_{2},X_{3}\right)
,\;\;t>0.
\end{equation*}%
Therefore, by the assumption, $y\in X$ and so $X\in Int^{KM}\left(
X_{0},X_{3}\right) $. In the same way, $X\in Int^{KM}\left(
X_{2},X_{1}\right) $.
\end{proof}

The following definition is inspired by some results due to Bykov and
Ovchinnikov \cite{BO06}.

\begin{definition}
\label{additive K-orbits} We will say that an admissible collection $\{%
\overline{X},X_{2},X_{3}\}$, where $\overline{X}=(X_0,X_1)$, has \textit{%
additive $K$-orbits} whenever for each $x\in X_{2}+X_{3}$ the equality 
\begin{equation*}
Orb^{K}\left( x;X_{2},X_{3}\right) =Orb^{K}\left( x;X_{0},X_{3}\right)
+Orb^{K}\left( x;X_{2},X_{1}\right)
\end{equation*}%
holds uniformly with respect to $x$.
\end{definition}

\begin{lemma}
\label{L2} If an admissible collection $\{\overline{X},X_{2},X_{3}\}$ has
additive $K$-orbits, then 
\begin{equation*}
Int^{KM}\left( X_{0},X_{3}\right) \cap Int^{KM}\left( X_{2},X_{1}\right)
=Int^{KM}\left( X_{2},X_{3}\right) .
\end{equation*}
\end{lemma}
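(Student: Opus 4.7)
The plan is to prove the two inclusions separately, where the forward direction is already essentially done by the previous lemma and the reverse direction is the substantive content.

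For the inclusion $Int^{KM}(X_{2},X_{3}) \subseteq Int^{KM}(X_{0},X_{3}) \cap Int^{KM}(X_{2},X_{1})$ I would simply invoke Lemma \ref{L1}(b), which does not even require the additivity of $K$-orbits, only admissibility of the collection.

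For the harder reverse inclusion, I would take an arbitrary $X \in Int^{KM}(X_{0},X_{3}) \cap Int^{KM}(X_{2},X_{1})$ and first verify that $X \in I(X_{2},X_{3})$ using Lemma \ref{Lemma-transfer-admissbile}. Then, to check $K$-monotonicity with respect to $(X_{2},X_{3})$, I would pick $x \in X$ and $y \in X_{2}+X_{3}$ satisfying $K(t,y;X_{2},X_{3}) \leq K(t,x;X_{2},X_{3})$ for all $t>0$. This inequality says precisely that $y \in Orb^{K}(x;X_{2},X_{3})$ with $\|y\|_{Orb^{K}(x;X_{2},X_{3})} \leq 1$. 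Invoking the additivity hypothesis, I would decompose
\[
y = y_{1} + y_{2}, \quad y_{1} \in Orb^{K}(x;X_{0},X_{3}), \; y_{2} \in Orb^{K}(x;X_{2},X_{1}),
\]
with $\|y_{1}\|_{Orb^{K}(x;X_{0},X_{3})} + \|y_{2}\|_{Orb^{K}(x;X_{2},X_{1})} \leq C$, where $C$ is independent of $x$ and $y$. Translating this back into $K$-functional language gives
\[
K(t,y_{1};X_{0},X_{3}) \preceq K(t,x;X_{0},X_{3}), \quad K(t,y_{2};X_{2},X_{1}) \preceq K(t,x;X_{2},X_{1})
\]
for all $t>0$, with constants independent of $x,y$.

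Now I would apply the hypotheses $X \in Int^{KM}(X_{0},X_{3})$ and $X \in Int^{KM}(X_{2},X_{1})$ separately to conclude that $y_{1},y_{2} \in X$ with $\|y_{i}\|_{X} \preceq \|x\|_{X}$, $i=1,2$. Since $X$ is a quasi-Banach space, this yields $y = y_{1}+y_{2} \in X$ with $\|y\|_{X} \preceq \|x\|_{X}$, establishing that $X \in Int^{KM}(X_{2},X_{3})$ with a uniform constant.

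The only real work is the translation between $Orb^{K}$-membership and the pointwise $K$-functional inequality, which is immediate from the definition of the $K$-orbit quasi-norm. Consequently I do not anticipate a genuine obstacle: the entire content of the proof is already packaged into Definition \ref{additive K-orbits} and Lemma \ref{L1}(b), and this lemma is essentially a bookkeeping exercise that makes explicit why the additivity of $K$-orbits was the right notion to isolate.
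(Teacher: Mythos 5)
Your proof is correct and follows the same argument as the paper: both directions are handled identically, with Lemma \ref{L1}(b) giving the forward inclusion and the additive $K$-orbit decomposition combined with the two uniform $K$-monotonicity hypotheses giving the reverse. No substantive difference.
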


\begin{proof}
In view of Lemma \ref{L1}(b), it suffices only to prove the embedding 
\begin{equation*}
Int^{KM}\left( X_{0},X_{3}\right) \cap Int^{KM}\left( X_{2},X_{1}\right)
\subset Int^{KM}\left( X_{2},X_{3}\right) .
\end{equation*}

First, if $X\in Int^{KM}\left( X_{0},X_{3}\right) \cap Int^{KM}\left(
X_{2},X_{1}\right) $. Then, by Lemma \ref{Lemma-transfer-admissbile}, $X\in
I\left( X_{2},X_{3}\right) $. To prove that $X\in Int^{KM}\left(
X_{2},X_{3}\right) $, assume that $x\in X$ and $y\in X_{2}+X_{3}$ satisfy 
\begin{equation*}
K\left( t,y;X_{2},X_{3}\right) \leq K\left( t,x;X_{2},X_{3}\right) ,\;\;t>0.
\end{equation*}%
Therefore, $y\in Orb^{K}\left( x;X_{2},X_{3}\right) $ and $\Vert y\Vert
_{Orb^{K}\left( x;X_{2},X_{3}\right) }\leq 1$. Thus, by the assumption, $%
y=y_{3}+y_{2}$, where $y_{3}\in Orb^{K}\left( x;X_{0},X_{3}\right) $, $%
y_{2}\in Orb^{K}\left( x;X_{2},X_{1}\right) $, 
\begin{equation*}
K\left( t,y_{3};X_{0},X_{3}\right) \leq CK\left( t,x;X_{0},X_{3}\right)
,\;\;t>0,
\end{equation*}%
and 
\begin{equation*}
K\left( t,y_{2};X_{2},X_{1}\right) \leq CK\left( t,x;X_{2},X_{1}\right)
,\;\;t>0,
\end{equation*}%
with some constant $C$ independent of $x$. Since $X\in Int^{KM}\left(
X_{0},X_{3}\right) \cap Int^{KM}\left( X_{2},X_{1}\right) $, it follows that 
$y_{2},y_{3}\in X$. In consequence, $y\in X$ and $\Vert y\Vert _{X}\leq
C^{\prime }\Vert x\Vert _{X}$ with a constant $C^{\prime }$ independent of $x
$, which implies that $X\in Int^{KM}\left( X_{2},X_{3}\right) $.
\end{proof}

\begin{proposition}
\label{L4} Let $\{\overline{X},X_{2},X_{3}\}$ and $\{\overline{Y}%
,Y_{2},Y_{3}\}$ be two admissible collections such that $X_{2}=\overline{X}%
_{E_{2}:K}$, $Y_{2}=\overline{Y}_{E_{2}:K}$, $X_{3}=\overline{X}_{E_{3}:K}$
and $Y_{3}=\overline{Y}_{E_{3};K}$, where $E_{2},E_{3}\in Int\left( 
\overline{L^{\infty }}\right) $. If $\overline{Y}$ is an $\overline{X}$%
-abundant couple and the collection $\{\overline{Y},Y_{2},Y_{3}\}$ has
additive $K$-orbits, then the collection $\{\overline{X},X_{2},X_{3}\}$ has $%
K$-additive orbits as well.
\end{proposition}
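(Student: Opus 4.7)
The plan is to establish the two inclusions in the equality
$$Orb^{K}(x; X_{2}, X_{3}) = Orb^{K}(x; X_{0}, X_{3}) + Orb^{K}(x; X_{2}, X_{1})$$
separately, using the $\overline{X}$-abundance of $\overline{Y}$ to transfer the additive $K$-orbit property assumed for $\{\overline{Y}, Y_{2}, Y_{3}\}$ over to $\{\overline{X}, X_{2}, X_{3}\}$.

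The easy inclusion ``$\supseteq$'' should follow from reiteration. If $y \in Orb^{K}(x; X_{0}, X_{3})$, then since $X_{2} \in Int^{K}(X_{0}, X_{3})$ by admissibility, I would pick $F_{2} \in Int(\overline{L^{\infty}})$ with $X_{2} = (X_{0}, X_{3})_{F_{2}:K}$. Reiteration \eqref{reiteration1} (using also $X_{3} = (X_{0}, X_{3})_{L^{\infty}(1/t):K}$, which holds because $(X_{0}, X_{3})$ is mutually closed by admissibility) identifies $K(t, \cdot; X_{2}, X_{3})$ up to constants with $K(t, K(\cdot, \cdot; X_{0}, X_{3}); F_{2}, L^{\infty}(1/t))$, and combining the pointwise estimate $K(\cdot, y; X_{0}, X_{3}) \leq C\, K(\cdot, x; X_{0}, X_{3})$ with lattice monotonicity of the $K$-functional (Lemma \ref{Prop-K-functional}) then yields $y \in Orb^{K}(x; X_{2}, X_{3})$; the inclusion $Orb^{K}(x; X_{2}, X_{1}) \subseteq Orb^{K}(x; X_{2}, X_{3})$ is handled analogously.

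For the hard inclusion ``$\subseteq$'', given $x \in X_{2} + X_{3}$ and $y \in Orb^{K}(x; X_{2}, X_{3})$, the $\overline{X}$-abundance of $\overline{Y}$ supplies $\hat{x}, \hat{y} \in \Sigma(\overline{Y})$ with $K(\cdot, \hat{x}; \overline{Y}) \cong K(\cdot, x; \overline{X})$ and $K(\cdot, \hat{y}; \overline{Y}) \cong K(\cdot, y; \overline{X})$, uniformly in $x, y$. Since $X_{2} + X_{3} = \overline{X}_{E_{2} + E_{3}:K}$ and $Y_{2} + Y_{3} = \overline{Y}_{E_{2} + E_{3}:K}$ by Proposition \ref{parameter is a sum}, both lifts automatically land in $Y_{2} + Y_{3}$. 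Applying reiteration in the two couples with the matching parameters $(E_{2}, E_{3})$ gives
$$K(t, \hat{y}; Y_{2}, Y_{3}) \cong K(t, y; X_{2}, X_{3}) \leq C\, K(t, x; X_{2}, X_{3}) \cong K(t, \hat{x}; Y_{2}, Y_{3}),$$
so $\hat{y} \in Orb^{K}(\hat{x}; Y_{2}, Y_{3})$ uniformly. The additive $K$-orbit hypothesis for $\{\overline{Y}, Y_{2}, Y_{3}\}$ then furnishes $\hat{y} = \hat{y}_{3} + \hat{y}_{2}$ with $\hat{y}_{3} \in Orb^{K}(\hat{x}; Y_{0}, Y_{3})$ and $\hat{y}_{2} \in Orb^{K}(\hat{x}; Y_{2}, Y_{1})$.

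To push the decomposition back to $\overline{X}$, set $g_{i}(t) := K(t, \hat{y}_{i}; \overline{Y})$ for $i = 2, 3$; these are concave, nonnegative functions and
$$K(t, y; \overline{X}) \cong K(t, \hat{y}; \overline{Y}) \preceq g_{2}(t) + g_{3}(t).$$
The weak $K$-divisibility theorem \cite[Theorem 3.2.12]{BK91} (already used in the proof of Proposition \ref{parameter is a sum}) then yields $y = y_{2} + y_{3}$ in $\Sigma(\overline{X})$ with $K(t, y_{i}; \overline{X}) \leq \gamma\, g_{i}(t)$ for a universal $\gamma$. A final reiteration computation combined with lattice monotonicity produces
\begin{align*}
K(t, y_{3}; X_{0}, X_{3}) &\cong K(t, K(\cdot, y_{3}; \overline{X}); L^{\infty}, E_{3}) \preceq K(t, g_{3}; L^{\infty}, E_{3}) \\
&\cong K(t, \hat{y}_{3}; Y_{0}, Y_{3}) \preceq K(t, \hat{x}; Y_{0}, Y_{3}) \cong K(t, x; X_{0}, X_{3}),
\end{align*}
so $y_{3} \in Orb^{K}(x; X_{0}, X_{3})$; symmetrically $y_{2} \in Orb^{K}(x; X_{2}, X_{1})$. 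The main obstacle I anticipate is bookkeeping: making sure that all constants hidden in the $\cong$ and $\preceq$ symbols are genuinely universal, which requires that the weak $K$-divisibility constant be absolute (as it is in \cite[Theorem 3.2.12]{BK91}) and that the two chains of reiteration identifications in $\overline{X}$ and $\overline{Y}$ be applied with matching parameters so the equivalences line up to give a decomposition witnessing additivity with a constant depending only on the data of the two admissible collections.
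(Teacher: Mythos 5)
Your proposal is correct and follows essentially the same route as the paper's proof: establish the easy inclusion via Lemma~\ref{L1}/reiteration, then lift $x,y$ to $\Sigma(\overline{Y})$ using $\overline{X}$-abundance, transfer membership in the $K$-orbit via matching reiteration parameters, decompose in $\overline{Y}$ by the additive-$K$-orbit hypothesis, pull the decomposition of $y$ back to $\Sigma(\overline{X})$ via weak $K$-divisibility, and finish with another reiteration chain. The only cosmetic difference is notation (the paper names the abundant lifts $f,g$ and the decomposition pieces $g_{0},g_{1}$; it also explicitly records the quasi-norm constant $C$ from $K(t,\cdot;\overline{Y})$ being a quasi-norm on $Y_0+Y_1$ before invoking $K$-divisibility), and your closing remark about tracking universality of constants matches what the paper silently relies on.
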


\begin{proof}
Suppose first $x\in X_{2}+X_{3}$ and $y\in Orb^{K}\left(
x;X_{0},X_{3}\right) $. Then, arguing in the same way as in the proof of
Lemma \ref{L1} $\left( b\right) $ (see also \eqref{reiter1}), we conclude
that $y\in Orb^{K}\left( x;X_{2},X_{3}\right) $. Similarly, the fact that $%
y\in Orb^{K}\left( x;X_{2},X_{1}\right) $ yields $y\in Orb^{K}\left(
x;X_{2},X_{3}\right) $. Thus, 
\begin{equation*}
Orb^{K}\left( x;X_{0},X_{3}\right) +Orb^{K}\left( x;X_{2},X_{1}\right)
\subset Orb^{K}\left( x;X_{2},X_{3}\right) ,
\end{equation*}%
with constants independent of $x$. Therefore, it remains only to prove the
opposite embedding 
\begin{equation}
Orb^{K}\left( x;X_{2},X_{3}\right) \subset Orb^{K}\left(
x;X_{0},X_{3}\right) +Orb^{K}\left( x;X_{2},X_{1}\right) .  \label{embedd}
\end{equation}

Let $x\in X_{2}+X_{3}$ and $y\in Orb^{K}\left( x;X_{2},X_{3}\right) $ be
arbitrary. Since $x,y\in \Sigma \left( \overline{X}\right) $ and $\overline{Y%
}$ is $\overline{X}$-abundant, we can select elements $f,g\in \Sigma (%
\overline{Y})$ such that 
\begin{equation*}
K\left( t,x;\overline{X}\right) \cong K\left( t,f;\overline{Y}\right) \;\;%
\mbox{and}\;\;K\left( t,y;\overline{X}\right) \cong K\left( t,g;\overline{Y}%
\right) ,\;\;t>0,
\end{equation*}%
with universal constants. Moreover, by the conditions and reiteration
arguments, 
\begin{eqnarray*}
K\left( t,x;X_{2},X_{3}\right) &=& K\left( t,x;\overline{X}_{E_{2}:K},%
\overline{X}_{E_{3}:K}\right) \cong K\left( t,K\left( \cdot ,x;\overline{X}%
\right);E_{2},E_{3}\right) \\
&\cong & K\left( t,K\left( \cdot ,f;\overline{Y}\right);E_{2},E_{3}\right)=K%
\left( t,f;Y_{2},Y_{3}\right) ,\;\;t>0.
\end{eqnarray*}%
Similarly, we obtain 
\begin{equation*}
K\left( t,y;X_{2},X_{3}\right) \cong K\left( t,g;Y_{2},Y_{3}\right) ,\;\;t>0.
\end{equation*}%
Combining these equivalences with the assumption that $y\in Orb^{K}\left(
x;X_{2},X_{3}\right) $ we conclude that $g\in Orb^{K}\left(
f;Y_{2},Y_{3}\right) $. Therefore, since the collection $\{\overline{Y}%
,Y_{2},Y_{3}\}$ has additive $K$-orbits, we may write $g=g_{0}+g_{1}$, where 
$g_{0}\in Orb^{K}\left( f;Y_{0},Y_{3}\right)$, $g_{1}\in
Orb^{K}\left(f;Y_{2},Y_{1}\right) .$ Since $\overline{Y}=(Y_0,Y_1)$ is a
quasi-Banach couple, then the $K$-functional $K\left( t,g;\overline{Y}%
\right) $ is a quasi-norm on the sum $Y_0+Y_1$ with the constant $%
C=\max(C_0,C_1)$, where $C_i$ is the quasi-norm constant for $Y_i$, $i=0,1$.
Consequently, we have 
\begin{equation*}
K\left( t,y;\overline{X}\right) \cong K\left( t,g;\overline{Y}\right) \leq
C\cdot \left( K\left( t,g_{0};\overline{Y}\right) +K\left( t,g_{1};\overline{%
Y}\right) \right) ,\;\;t>0.
\end{equation*}
Using now once more a weak version of $K$-divisibility property \cite[%
Theorem~3.2.12]{BK91}, we can find a decomposition $y=y_{0}+y_{1}$ such that 
$K\left( t,y_{i};\overline{X}\right) \leq \gamma CK\left( t,g_{i};\overline{Y%
}\right) $, $i=0,1$, for all $t>0$. Combining these inequalities with the above relations and the hypothesis that $\{\overline{X},X_{2},X_{3}\}$ and $\{\overline{Y},Y_{2},Y_{3}\}$ are admissible collections, by  reiteration arguments, we get 
\begin{eqnarray*}
K\left( t,y_{0};X_{0},X_{3}\right) &=& K\left( t,K\left( \cdot ,y_0;%
\overline{X}\right);L_\infty,E_{3}\right)\preceq K\left( t,K\left( \cdot
,g_0;\overline{Y}\right);L_\infty,E_{3}\right) \\
&\cong & K\left(t,g_{0};Y_{0},Y_{3}\right)\preceq K\left(
t,f;Y_{0},Y_{3}\right) \cong K\left( t,K\left( \cdot ,f;\overline{Y}%
\right);L_\infty,E_{3}\right) \\
&\cong & K\left( t,K\left( \cdot ,x;\overline{X}\right);L_\infty,E_{3}%
\right)= K\left(t,x;X_{0},X_{3}\right),\;\;t>0.
\end{eqnarray*}

Thus, $y_{0}\in Orb^{K}\left( x;X_{0},X_{3}\right) .$ In the same way, $%
y_{1}\in Orb^{K}\left( x;X_{2},X_{1}\right) $. As a result, we get embedding %
\eqref{embedd} (with a constant independent of $x\in X_{2}+X_{3}$), and
therefore the proof is completed.
\end{proof}

\begin{lemma}
\label{L3} Let $\overline{X}=(X_{0},X_{1})$ be a $p$-convex quasi-Banach
lattice couple. Then, for each $r\geq 1/p$ and every $x\in \Sigma \left( 
\overline{X}\right) $ we have 
\begin{equation*}
Orb^{K}\left( x;\overline{X}\right) ^{\left( r\right) }=Orb^{K}\left( x;%
\overline{X^{\left( r\right) }}\right) .
\end{equation*}
\end{lemma}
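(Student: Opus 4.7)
The strategy is to invoke Proposition~\ref{K-functional for p-convexifications} applied to $\overline{X}$ with the convexity index $p$ in its statement replaced by $1/r$. The hypothesis $r\geq 1/p$ means $1/r\leq p$, and since $p$-convexity of a quasi-Banach lattice implies $q$-convexity for every $q\leq p$ (a standard monotonicity property in the theory of quasi-Banach lattices), $\overline{X}$ is indeed $(1/r)$-convex. The proposition then yields $\Sigma(\overline{X^{(r)}})=\Sigma(\overline{X})^{(r)}$ together with the $K$-functional equivalence
\[
K(t,z;\overline{X^{(r)}})\cong K(t^{r},z;\overline{X})^{1/r},\qquad t>0,
\]
valid for every $z\in\Sigma(\overline{X^{(r)}})$, with constants independent of $z$ and $t$.

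Next, fix a nonzero $x\in\Sigma(\overline{X})$ and let $y\in\Sigma(\overline{X})$ be arbitrary. Substituting the $K$-functional equivalence into both the numerator and denominator of the defining supremum for the orbit quasi-norm (so the constants remain uniform in $t$) and then performing the bijective change of variables $s=t^{r}$ on $(0,\infty)$, we obtain
\[
\|y\|_{Orb^{K}(x;\overline{X^{(r)}})}
=\sup_{t>0}\frac{K(t,y;\overline{X^{(r)}})}{K(t,x;\overline{X^{(r)}})}
\cong\sup_{s>0}\left(\frac{K(s,y;\overline{X})}{K(s,x;\overline{X})}\right)^{1/r}.
\]
Since the map $\tau\mapsto\tau^{1/r}$ is continuous and strictly increasing on $[0,\infty)$, it commutes with suprema of nonnegative quantities. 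Consequently,
\[
\|y\|_{Orb^{K}(x;\overline{X^{(r)}})}
\cong\left(\sup_{s>0}\frac{K(s,y;\overline{X})}{K(s,x;\overline{X})}\right)^{1/r}
=\|y\|_{Orb^{K}(x;\overline{X})}^{1/r}
=\bigl\||y|\bigr\|_{Orb^{K}(x;\overline{X})^{(r)}},
\]
the last identity being the very definition of the quasi-norm on the $r$-convexification. This equivalence on the common underlying set $\Sigma(\overline{X})^{(r)}$ identifies $Orb^{K}(x;\overline{X})^{(r)}$ with $Orb^{K}(x;\overline{X^{(r)}})$ both as sets and as quasi-Banach lattices. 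Note that the left-hand side is a well-defined lattice $r$-convexification because $Orb^{K}(x;\overline{X})$ is a quasi-Banach lattice: by Lemma~\ref{Prop-K-functional}, $K(t,\cdot\,;\overline{X})$ depends only on $|\cdot|$ and is monotone with respect to the lattice order, so the orbit quasi-norm is lattice-monotone.

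The only delicate point is the invocation of Proposition~\ref{K-functional for p-convexifications} at the convexity index $1/r$ rather than $p$; this reduces to the standard monotonicity of $q$-convexity in quasi-Banach lattices. Everything else is essentially bookkeeping: the change of variables $s=t^{r}$ and the commutation of suprema with the continuous strictly increasing function $\tau\mapsto\tau^{1/r}$.
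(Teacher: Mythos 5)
Your proof is correct and follows essentially the same route as the paper's: apply Proposition~\ref{K-functional for p-convexifications} (with the convexity index taken to be $1/r$) to obtain the pointwise-in-$t$ equivalence $K(t,\cdot\,;\overline{X^{(r)}})\cong K(t^{r},\cdot\,;\overline{X})^{1/r}$, then substitute into the defining supremum of the $K$-orbit quasi-norm to conclude that the two orbit spaces coincide with equivalent quasi-norms. The paper's version of this argument is more terse, stating only that the two families of $K$-functional inequalities (for $\overline{X}$ and for $\overline{X^{(r)}}$) are equivalent; your version makes explicit the change of variables $s=t^{r}$, the commutation of suprema with the map $\tau\mapsto\tau^{1/r}$, and the lattice structure of the $K$-orbit needed for the $r$-convexification to be well defined, all of which are sound. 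Your explicit appeal to the downward monotonicity of $p$-convexity (so that $p$-convexity for $p\geq 1/r$ implies $(1/r)$-convexity, allowing the proposition to be applied at index $1/r$) is also implicitly required in the paper's proof; in the quasi-Banach setting this monotonicity can be justified, for instance, by combining Lemma~\ref{renorming} with the convexification calculus recalled in Section~\ref{Prel4}.
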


\begin{proof}
Observe that, by Proposition \ref{K-functional for p-convexifications}, the
inequalities 
\begin{equation*}
K\left( t,y;\overline{X}\right) \leq CK\left( t,x;\overline{X}\right)
,\;\;t>0,
\end{equation*}%
and 
\begin{equation*}
K\left( t,y;\overline{X^{\left( r\right) }}\right) \leq C_{r}\cdot
K\left(t,x;\overline{X^{\left( r\right) }}\right) ,\;\;t>0,
\end{equation*}%
where the constant $C_r$ depends only on $\overline{X}$, $C$ and $r$, are
equivalent. As a result, the spaces $Orb^{K}\left( x;\overline{X}\right)
^{\left(r\right) }$ and $Orb^{K}( x;\overline{X^{\left( r\right) }}) $
coincide with equivalent norms.
\end{proof}

\begin{lemma}
\label{L3a} Let $0<\theta <1,0<p\le\infty $. Suppose $\overline{X}%
=(X_{0},X_{1})$ is a $p$-convex quasi-Banach lattice couple and $r$ is a
positive number such that $r\geq 1/p$. Then, $\left( \overline{X}_{\theta
,p}\right) ^{\left( r\right) }=\overline{X^{\left( r\right) }}_{\theta ,rp}$.
\end{lemma}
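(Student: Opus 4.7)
The plan is to reduce everything to Proposition \ref{K-functional for p-convexifications}, which gives the relationship between the $K$-functionals of $\overline{X}$ and $\overline{X^{(r)}}$, and then change variables in the defining integral.

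First I would verify that Proposition \ref{K-functional for p-convexifications} is applicable with parameter $1/r$ in place of $p$. Since $r\geq 1/p$ we have $1/r\leq p$, and a $p$-convex lattice is $q$-convex for every $q\leq p$ with constant at most $M^{(p)}$; hence both $X_{0}$ and $X_{1}$ are $(1/r)$-convex, so $\overline{X}$ is a $(1/r)$-convex quasi-Banach lattice couple. Applying the proposition with $p$ replaced by $1/r$, we get that $\Sigma(\overline{X^{(r)}})=\Sigma(\overline{X})^{(r)}$ (in particular the underlying sets coincide) and, for every $x\in\Sigma(\overline{X^{(r)}})$ and all $t>0$,
\begin{equation*}
K(t,x;\overline{X^{(r)}})\cong K(t^{r},x;\overline{X})^{1/r},
\end{equation*}
with constants depending only on $r$ and the convexity constants of $X_{0},X_{1}$.

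Second, I would plug this equivalence into the defining quasi-norm of $\overline{X^{(r)}}_{\theta ,rp}$ and perform the substitution $s=t^{r}$ (so that $dt/t=r^{-1}\,ds/s$). For $p<\infty$ this gives
\begin{equation*}
\|x\|_{\overline{X^{(r)}}_{\theta ,rp}}^{rp}
=\int_{0}^{\infty}\bigl(K(t,x;\overline{X^{(r)}})\,t^{-\theta}\bigr)^{rp}\frac{dt}{t}
\cong \int_{0}^{\infty}K(t^{r},x;\overline{X})^{p}\,t^{-\theta rp}\frac{dt}{t}
=\frac{1}{r}\int_{0}^{\infty}\bigl(K(s,x;\overline{X})\,s^{-\theta}\bigr)^{p}\frac{ds}{s}
=\frac{1}{r}\,\|x\|_{\overline{X}_{\theta ,p}}^{p}.
\end{equation*}
Taking $(rp)$-th roots and recalling that, by the definition of the $r$-convexification, the quasi-norm on $(\overline{X}_{\theta ,p})^{(r)}$ is $\|x\|_{\overline{X}_{\theta ,p}}^{1/r}$, we obtain the desired equivalence of quasi-norms; since the underlying sets agree (by the first step), this establishes the claimed lattice identification.

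For the endpoint case $p=\infty$, I would just replace the integral in the second step by the corresponding supremum: the same $K$-functional equivalence together with the substitution $s=t^{r}$ yields $\sup_{t>0}K(t,x;\overline{X^{(r)}})t^{-\theta}\cong(\sup_{s>0}K(s,x;\overline{X})s^{-\theta})^{1/r}$, which is exactly $\|x\|_{\overline{X}_{\theta ,\infty}}^{1/r}$. No serious obstacle is expected; the only point requiring mild care is verifying the convexity hypothesis needed to invoke Proposition \ref{K-functional for p-convexifications} (handled by the condition $r\geq 1/p$) and keeping track of the exponents under the change of variables.
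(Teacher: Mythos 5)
Your proof is correct and takes essentially the same route as the paper's: both invoke Proposition \ref{K-functional for p-convexifications} (with parameter $1/r$ in place of $p$) to get $K(t,x;\overline{X^{(r)}})\cong K(t^r,x;\overline{X})^{1/r}$ and then perform the change of variables $s=t^r$ in the defining integral. The only cosmetic difference is that you start from the quasi-norm of $\overline{X^{(r)}}_{\theta,rp}$ and transform it into that of $\overline{X}_{\theta,p}$, whereas the paper goes the other way; and you make explicit the (correct) point that $p$-convexity implies $(1/r)$-convexity, which the paper leaves implicit.
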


\begin{proof}
Assuming that $p<\infty$ and applying once more Proposition \ref%
{K-functional for p-convexifications}, we obtain 
\begin{eqnarray*}
\left\Vert x\right\Vert _{\left( \overline{X}_{\theta ,p}\right) ^{\left(
r\right) }}^{r} &=&\left( \int\nolimits_{0}^{\infty }\left( s^{-\theta
}K\left( s,x;\overline{X}\right) \right) ^{p}ds/s\right) ^{1/p} \\
&\cong &\left( \int\nolimits_{0}^{\infty }\left( s^{-\theta }K\left(
s^{1/r},x;\overline{X^{\left( r\right) }}\right) ^{r}\right)
^{p}\,ds/s\right) ^{1/p} \\
&\cong &\left( \left( \int\nolimits_{0}^{\infty }\left( u^{-\theta }K\left(
u,x;\overline{X^{\left( r\right) }}\right) \right) ^{rp}\,du/u\right)
^{1/rp}\right) ^{r} \\
&=&\Vert x\Vert _{\overline{X^{\left( r\right) }}_{\theta ,rp}}^{r}.
\end{eqnarray*}
The case when $p=\infty$ can be handled quite similarly.
\end{proof}

\begin{proof}[Proof of Theorem \protect\ref{Ar-Cw1}]
Let $\overline{X}=(X_{0},X_{1})$ be a mutually closed quasi-Banach couple, $%
0<\theta <\eta <1 $, $0<p,q\leq \infty $. We put $X_{2}=\overline{X}_{\theta
,p}$ and $X_{3}=\overline{X}_{\eta ,q}.$ Since the space $L_{\ast
}^{r}\left( t^{-s}\right)$ is relatively complete with respect to the couple 
$\overline{L^{\infty }}$ for all $0<s<1$ and $0<r\leq \infty$, by Remark \ref%
{remark-admissibility} (see also Proposition \ref{Lemma-admissibility-1}),
it follows that the collection $\{\overline{X},\overline{X}_{\theta ,p},%
\overline{X}_{\eta ,q}\}$ is admissible. Moreover, by the classical
reiteration theorem (see e.g. \cite[Theorem~3.5.3]{BL76}), we have 
\begin{equation}
\overline{X}_{\theta ,p}=\left( X_{0},\overline{X}_{\eta ,q}\right) _{\theta
/\eta ,p}\;\;\mbox{and}\;\;\overline{X}_{\eta ,q}=\left( \overline{X}%
_{\theta ,p},X_{1}\right) _{(\eta -\theta )/(1-\eta ),q}.
\label{general class formula}
\end{equation}

Along with the collection $\{\overline{X},\overline{X}_{\theta ,p},\overline{%
X}_{\eta ,q}\}$ we consider also the (admissible) collection $\{\overline{%
L^{\infty }},L_{\ast }^{p}\left( t^{-\theta }\right) ,L_{\ast }^{q}\left(
t^{-\eta }\right) \}$, where $L_{\ast }^{r}\left( t^{-s}\right) =L^{r}\left(
t^{-s},\frac{dt}{t}\right) $, $0<s<1$, $0<r\leq \infty $. Since for all $%
0<s<1$ and $0<r\leq \infty $ 
\begin{equation}
L_{\ast }^{r}\left( t^{-s}\right) =\overline{L^{\infty }}_{s,r},
\label{Class int}
\end{equation}%
then, applying \eqref{general class formula} to the couple $\overline{%
L^{\infty }}$, we get 
\begin{equation*}
L_{\ast }^{p}\left( t^{-\theta }\right) =\left( L^{\infty },L_{\ast
}^{q}\left( t^{-\eta }\right) \right) _{\theta /\eta ,p}
\end{equation*}%
and 
\begin{equation*}
L_{\ast }^{q}\left( t^{-\eta }\right) =\left( L_{\ast }^{p}\left( t^{-\theta
}\right) ,L^{\infty }(1/t)\right) _{(\eta -\theta )/(1-\eta ),q}.
\end{equation*}

In view of Lemma \ref{L2}, we need only to prove that the collection $\{%
\overline{X},X_{2},X_{3}\}$ has $K$-additive orbits. In turn, since the
couple $\overline{L^{\infty }}$ is $Conv$-abundant, according to Proposition %
\ref{L4}, this would be established, once we prove that the collection $\{%
\overline{L^{\infty }},L_{\ast }^{p}\left( t^{-\theta }\right) ,L_{\ast
}^{q}\left( t^{-\eta }\right) \}$ has $K$-additive orbits. In other words,
it suffices to show that 
\begin{eqnarray}
Orb^{K}\left( x;L_{\ast }^{p}\left( t^{-\theta }\right) ,L_{\ast }^{q}\left(
t^{-\eta }\right) \right) &=&Orb^{K}\left( x;L^{\infty },L_{\ast }^{q}\left(
t^{-\eta }\right) \right)  \notag \\
&+&Orb^{K}\left( x;L_{\ast }^{p}\left( t^{-\theta }\right) ,L^{\infty
}(1/t)\right),  \label{additive}
\end{eqnarray}%
for every $x\in L_{\ast }^{p}\left( t^{-\theta }\right) +L_{\ast
}^{q}\left(t^{-\eta }\right) $ and $0<\theta<\eta<1$, $0<p,q\le \infty$.

Let $r\geq \max (1/p,1/q,1)$. Then, from \eqref{Class int} and Lemmas \ref%
{L3}, \ref{L3a} it follows 
\begin{eqnarray*}
Orb^{K}\left( x;L_{\ast }^{p}\left( t^{-\theta }\right) ,L_{\ast }^{q}\left(
t^{-\eta }\right) \right) ^{(r)} &=&Orb^{K}\left( x;\left( \overline{%
L^{\infty }}_{\theta ,p}\right) ^{(r)},\left( \overline{L^{\infty }}_{\eta
,q}\right) ^{(r)}\right)  \\
&=&Orb^{K}\left( x;\left( \overline{L^{\infty }}^{(r)}\right) _{\theta
,pr},\left( \overline{L^{\infty }}^{(r)}\right) _{\eta ,qr}\right) ,
\end{eqnarray*}%
and similarly 
\begin{equation*}
Orb^{K}\left( x;L^{\infty },L_{\ast }^{q}\left( t^{-\eta }\right) \right)
^{(r)}=Orb^{K}\left( x;\left( L^{\infty }\right) ^{(r)},\left( \overline{%
L^{\infty }}^{(r)}\right) _{\eta ,qr}\right) ,
\end{equation*}%
\begin{equation*}
Orb^{K}\left( x;L_{\ast }^{p}\left( t^{-\theta }\right) ,L^{\infty
}(1/t)\right) ^{(r)}=Orb^{K}\left( x;\left( \overline{L^{\infty }}%
^{(r)}\right) _{\theta ,pr},\left( L^{\infty }(1/t)\right) ^{(r)}\right) .
\end{equation*}%
Observe that $\left( L^{\infty }\right) ^{(r)}=L^{\infty }$ and $\left(
L^{\infty }(1/t)\right) ^{(r)}=L^{\infty }(t^{-1/r})$. Therefore, since $%
pr\geq 1$, $qr\geq 1$, applying \cite[Theorem~2.5]{BO06} to the Banach
couple $\left( L^{\infty },L^{\infty }(t^{-1/r})\right) $, we obtain 
\begin{eqnarray*}
Orb\left( x;\left( \overline{L^{\infty }}^{(r)}\right) _{\theta ,pr},\left( 
\overline{L^{\infty }}^{(r)}\right) _{\eta ,qr}\right)  &=&Orb\left(
x;\left( L^{\infty }\right) ^{(r)},\left( \overline{L^{\infty }}%
^{(r)}\right) _{\eta ,qr}\right)  \\
&+&Orb\left( x;\left( \overline{L^{\infty }}^{(r)}\right) _{\theta
,pr},\left( L^{\infty }(1/t)\right) ^{(r)}\right) .
\end{eqnarray*}%
Note that all the couples involved here have the uniform Calder\'{o}n-Mityagin
property (see, for instance, \cite{Cwikel1} or \cite{SP78}). Hence (see also
Section \ref{Prel1}), we can replace in the last equality the orbits with
the $K$-orbits: 
\begin{eqnarray*}
Orb^{K}\left( x;\left( \overline{L^{\infty }}^{(r)}\right) _{\theta
,pr},\left( \overline{L^{\infty }}^{(r)}\right) _{\eta ,qr}\right) 
&=&Orb^{K}\left( x;\left( L^{\infty }\right) ^{(r)},\left( \overline{%
L^{\infty }}^{(r)}\right) _{\eta ,qr}\right)  \\
&+&Orb^{K}\left( x;\left( \overline{L^{\infty }}^{(r)}\right) _{\theta
,pr},\left( L^{\infty }(1/t)\right) ^{(r)}\right) ,
\end{eqnarray*}%
and combining this with the preceding formulae, we get 
\begin{eqnarray}
Orb^{K}\left( x;L_{\ast }^{p}\left( t^{-\theta }\right) ,L_{\ast }^{q}\left(
t^{-\eta }\right) \right) ^{(r)} &=&Orb^{K}\left( x;L^{\infty },L_{\ast
}^{q}\left( t^{-\eta }\right) \right) ^{(r)}  \notag \\
&+&Orb^{K}\left( x;L_{\ast }^{p}\left( t^{-\theta }\right) ,L^{\infty
}(1/t)\right) ^{(r)}.  \label{additive1}
\end{eqnarray}

Next, since $1/r\le \min(p,q)$, the quasi-Banach function couples $%
\left(L^{\infty },L_{\ast }^{q}\left(t^{-\eta }\right) \right)$ and $%
\left(L^{\infty },L_{\ast }^{q}\left(t^{-\eta }\right) \right)$ are $1/r$%
-convex. Hence, 
\begin{equation*}
U:=Orb^{K}\left( x;L^{\infty },L_{\ast }^{q}\left(t^{-\eta }\right)
\right)\;\;\mbox{and}\;\;V:=Orb^{K}\left( x;L_{\ast }^{p}\left( t^{-\theta
}\right) ,L^{\infty}(1/t)\right)
\end{equation*}
are $1/r$-convex quasi-Banach function lattices. Indeed, since the spaces $%
L^{\infty }$ and $L_{\ast }^{q}\left(t^{-\eta }\right)$ are $1/r$-convex
with constant $1$, in view of Lemma \ref{sum is p-convex}, for any $%
y_1,y_2,\dots,y_n\in U$ we have 
\begin{eqnarray*}
\Big\|\Big(\sum_{k=1}^n |y_k|^{1/r}\Big)^{r}\Big\|_{U} &=& \sup_{t>0}\frac{%
K\left(t,\Big(\sum_{k=1}^n |y_k|^{1/r}\Big)^{r};L^{\infty },L_{\ast
}^{q}\left(t^{-\eta }\right) \right)}{K\left(t,x;L^{\infty },L_{\ast
}^{q}\left(t^{-\eta }\right) \right)} \\
&\le& 2^{r-1}\sup_{t>0}\left(\frac{\sum_{k=1}^n K\left(t,|y_k|;L^{\infty
},L_{\ast }^{q}\left(t^{-\eta }\right)\right)^{1/r}}{K\left(t,x;L^{\infty
},L_{\ast }^{q}\left(t^{-\eta }\right)\right)^{1/r}}\right)^r \\
&\le& 2^{r-1}\left(\sum_{k=1}^n\left(\sup_{t>0}\frac{K\left(t,|y_k|;L^{%
\infty },L_{\ast }^{q}\left(t^{-\eta }\right)\right)}{K\left(t,x;L^{\infty
},L_{\ast }^{q}\left(t^{-\eta }\right)\right)} \right)^{1/r}\right)^r \\
&=&2^{r-1}\left(\sum_{k=1}^n\|y_k\|_U^{1/r}\right)^r.
\end{eqnarray*}%
Quite similarly, for any $z_1,z_2,\dots,z_n\in V$, 
\begin{equation*}
\Big\|\Big(\sum_{k=1}^n |z_k|^{1/r}\Big)^{r}\Big\|_{V}\le
2^{r-1}\left(\sum_{k=1}^n\|z_k\|_V^{1/r}\right)^r.
\end{equation*}
From this observation and Proposition \ref{K-functional for
p-convexifications} it follows that $(U+V)^{\left( r\right) } =U^{\left(
r\right) }+V^{\left( r\right) }$. Combining this together with %
\eqref{additive1} we get \eqref{additive}, which completes the proof of the
theorem.
\end{proof}

\begin{remark}
In the paper \cite{ArCw84}, by using some ideas from \cite{Ovc82}, it is
constructed an example of (Banach) rearrangement invariant spaces of
measurable functions $B_{1}$, $B_{p}$, $B_{q}$, $B_{\infty }$ and $A$ such
that $B_{p}=\left( B_{1},B_{\infty }\right) _{1/4,p}$, $B_{q}=\left(
B_{1},B_{\infty }\right) _{3/4,q}$, $A$ is an interpolation space with
respect to both $\left( B_{1},B_{q}\right) $ and $\left( B_{q},B_{\infty
}\right) $, but $A$ fails to be an interpolation space with respect to $%
\left( B_{p},B_{q}\right) $. This indicates that the Arazy-Cwikel formula %
\eqref{main formula2} cannot be extended to the class of all Banach couples
and hence the conditions of Corollary \ref{Ar-Cw2-cor}, in general, cannot
be skipped even in the Banach case. At the same time, note that the example
of $l^{p}$-spaces, $0<p\leq \infty $, shows that the latter conditions are
not necessary to have \eqref{main
formula2} (see Example \ref{ex1} below or \cite{AsCwNi21}). 
\end{remark}

\begin{remark}
By the well-known Wolff's theorem \cite{Wolff} (see also \cite{JNP} and \cite%
[Theorem~4.5.16]{BK91}), if $X_{2},X_{3}$ are intermediate spaces with
respect to a quasi-Banach couple $\overline{X}=\left( X_{0},X_{1}\right) $
such that $\left( X_{0},X_{3}\right) _{\mu ,p}=X_{2},\left(
X_{2},X_{1}\right) _{\nu,q}=X_{3}$ for some $0<\mu ,\nu <1$ and $0<p,q\leq
\infty $, then $X_{2}=\overline{X}_{\theta ,p}$ and $X_{3}=\overline{X}%
_{\eta ,q}$ for some $0<\theta <\eta <1.$ This allows to relax somewhat the
assumptions of Theorem \ref{Ar-Cw1} and Corollary \ref{Ar-Cw1-cor}. For
instance, we obtain that the equality 
\begin{equation*}
Int^{KM}\left( X_{2},X_{3}\right) =Int^{KM}\left( X_{0},X_{3}\right) \cap
Int^{KM}\left( X_{2},X_{1}\right)
\end{equation*}%
holds for intermediate spaces $X_{2},X_{3}$ with respect to a quasi-Banach
couple $\overline{X}=\left( X_{0},X_{1}\right) $ whenever $\left(
X_{0},X_{3}\right) _{\mu ,p:K}=X_{2}$ and $\left( X_{2},X_{1}\right) _{\nu
,q:K}=X_{3}$ for some $0<\mu ,\nu <1$, $0<p,q\leq \infty $.
\end{remark}

\vskip0.4cm

\section{Some applications}

\label{ex1}

\subsection{$l^{p}$-spaces.}

As usual, the space $l^{p}$, $0<p\leq \infty $, consists of all sequences $%
x=(x_{k})_{k=1}^{\infty }$ such that 
\begin{equation*}
\Vert x\Vert _{l^{p}}:=\Big(\sum_{k=1}^{\infty }|x_{k}|^{p}\Big)%
^{1/p}<\infty 
\end{equation*}%
(with the usual modification for $p=\infty $).

Let $0<s<p<q<r\leq \infty .$ Since the couple $\left( l^{s},l^{r}\right) $
is mutually closed, from Theorem \ref{Ar-Cw1} it follows that 
\begin{equation}
Int^{KM}\left( l^{p},l^{q}\right) =Int^{KM}\left( l^{s},l^{q}\right) \cap
Int^{KM}\left( l^{p},l^{r}\right) .  \label{lp-case}
\end{equation}%
Note that, if $q\geq 1$, then all the couples from \eqref{lp-case} have the
uniform Calder\'{o}n-Mityagin property \cite[Corollary~4.6]{AsCwNi21} and hence
the corresponding classes of $K$-monotone and interpolation spaces coincide.
Therefore, in this case we have 
\begin{equation}
Int\left( l^{p},l^{q}\right) =Int\left( l^{s},l^{q}\right) \cap Int\left(
l^{p},l^{r}\right) .  \label{lp-case1}
\end{equation}%
Moreover, by \cite[Theorem~4.1]{AsCwNi21}, \eqref{lp-case1} holds for all $%
0<s<p<q<r\leq \infty .$ At the same time, if $q<1$, $\left(
l^{p},l^{q}\right) $ is not a Calder\'{o}n-Mityagin couple \cite[Theorem~5.3]%
{AsCwNi21}, and so $Int\left( l^{p},l^{q}\right) \neq Int^{KM}\left(
l^{p},l^{q}\right) $ and $Int\left( l^{s},l^{q}\right) \neq Int^{KM}\left(
l^{s},l^{q}\right) $. Thus, if $q<1$, equalities \eqref{lp-case} and %
\eqref{lp-case1} are different. 

Next, let us consider a more general case.

\vskip0.2cm

\subsection{Lorentz $L^{p,q}$-spaces.}

Let $L^{p,q}$, $0<p<\infty $, $0<q\leq \infty $, be the Lorentz spaces of
measurable functions on an arbitrary underlying $\sigma $-finite measure
space $(T,\Sigma ,\mu )$, which can be defined using the quasi-norms 
\begin{equation*}
\Vert f\Vert _{L^{p,q}}:=\Big(\int_{0}^{\infty }f^{\ast }(u)^{q}\,d(u^{q/p})%
\Big)^{1/q}\;\;\mbox{for}\;\;q<\infty ,
\end{equation*}%
and 
\begin{equation*}
\Vert f\Vert _{L^{p,\infty }}:=\mathrm{ess\,sup}_{u>0}\left( f^{\ast
}(u)u^{1/p}\right) .
\end{equation*}%
Here, $f^{\ast }$ is the non-increasing rearrangement of the function $|f|$,
i.e., 
\begin{equation*}
f^{\ast }(u):=\inf \{s>0:\,\mu (\{t\in T:\,|f(t)|>s\})\leq u\},\text{ \ }%
0<u<\mu (T).
\end{equation*}%
Recall that $L^{p,p}=L^{p}$, $0<p<\infty $, isometrically.

Let $0<p_0<p_2<p_3<p_1<\infty $ and $0<q_0,q_1,q_2,q_3\leq \infty $ be
arbitrary. Since $L^{p_i,q_i}=(L^{r},L^{s})_{\theta_i,q_i}$ for $%
0<r<p_i<s<\infty$ and $1/p_i=(1-\theta_i )/r+\theta_i /s$ \cite[Theorem~5.3.1%
]{BL76}, then from Corollary \ref{Cor-admissibility} it follows that the
couple $(L^{p_0,q_0},L^{p_1,q_1})$ is mutually closed. Moreover, $%
L^{p_2,q_2}=(L^{p_0,q_0},L^{p_1,q_1})_{\theta ,q_2}$ and $%
L^{p_3,q_3}=(L^{p_0,q_0},L^{p_1,q_1})_{\eta ,q_3}$, where $1/p_2=(1-\theta
)/p_0+\theta /p_1$ and $1/p_3=(1-\eta )/p_0+\eta /p_1$ \cite[Theorem~5.3.1]%
{BL76}. Since $0<\theta <\eta <1$, by Theorem \ref{Ar-Cw1}, we have 
\begin{equation*}
Int^{KM}\left( L^{p_2,q_2},L^{p_3,q_3}\right) =Int^{KM}\left(
L^{p_0,q_0},L^{p_3,q_3}\right)\cap
Int^{KM}\left(L^{p_2,q_2},L^{p_1,q_1}\right).
\end{equation*}

\vskip0.2cm

Suppose now that $(T,\Sigma ,\mu )$ is the half-line $(0,\infty )$ with the
Lebesgue measure. Then, arguing similarly as in the paper \cite{CSZ}, one
can prove that the couple $\left( L^{p,q},L^{r,s}\right) $ has the uniformly Calder\'{o}n-Mityagin property for all $0<p<r\leq \infty $ and $0<q,s\leq \infty $.
Therefore, from Corollary \ref{Ar-Cw2-cor} it follows that for all $%
0<p_{0}<p_{2}<p_{3}<p_{1}\leq \infty $ and $0<q_{0},q_{1},q_{2},q_{3}\leq
\infty $ 
\begin{equation*}
Int\left( L^{p_{2},q_{2}},L^{p_{3},q_{3}}\right) =Int\left(
L^{p_{0},q_{0}},L^{p_{3},q_{3}}\right) \cap Int\left(
L^{p_{2},q_{2}},L^{p_{1},q_{1}}\right) .
\end{equation*}%
Observe that the last result was obtained in the case of $L^{p}$-spaces in \cite[Theorem~1.1]{CSZ}. 

\vskip0.2cm

\subsection{Weighted $L^{p}$-spaces.}

Let $0<p\leq \infty $ and $w(t)$ be a nonnegative measurable function on a $%
\sigma $-finite measure space $(T,\Sigma ,\mu )$. Consider a couple the
weighted $L^{p}$-spaces $\left( L^{p_{0}}\left( w_{0}\right)
,L^{p_{1}}\left( w_{1}\right) \right) $, $0<p_{0},p_{1}\leq \infty $ (in
particular, $p_{0}$ and $p_{1}$ may be equal). 

Let $0<\theta <\eta <1$. Assuming that $w_{2}=w_{0}^{1-\theta
}w\,_{1}^{\theta }$, $w_{3}=w_{0}^{1-\eta }w_{1}^{\eta }$, $%
1/p_{2}=(1-\theta )/p_{0}+\theta /p_{1}$ and $1/p_{3}=(1-\eta )/p_{0}+\eta
/p_{1}$, by \cite[Theorem~5.5.1]{BL76}, we get 
\begin{equation*}
\left( L^{p_{0}}\left( w_{0}\right) ,L^{p_{1}}\left( w_{1}\right) \right)
_{\theta ,p_{2}}=L^{p_{2}}\left( w_{2}\right) \;\;\mbox{and}\;\;\left(
L^{p_{0}}\left( w_{0}\right) ,L^{p_{1}}\left( w_{1}\right) \right) _{\eta
,p_{3}}=L^{p_{3}}\left( w_{3}\right) .
\end{equation*}%
In consequence, from Theorem \ref{Ar-Cw1} it follows 
\begin{equation*}
Int^{KM}\left( L^{p_{2}}\left( w_{2}\right) ,L^{p_{3}}\left( w_{3}\right)
\right) =Int^{KM}\left( L^{p_{0}}\left( w_{0}\right) ,L^{p_{3}}\left(
w_{3}\right) \right) \cap Int^{KM}\left( L^{p_{2}}\left( w_{2}\right)
,L^{p_{1}}\left( w_{1}\right) \right) .
\end{equation*}%
In particular, if $p_{0},p_{1}\geq 1$, by \cite{SP78} and Corollary \ref%
{Ar-Cw1-cor}, we obtain 
\begin{equation*}
Int\left( L^{p_{2}}\left( w_{2}\right) ,L^{p_{3}}\left( w_{3}\right) \right)
=Int\left( L^{p_{0}}\left( w_{0}\right) ,L^{p_{3}}\left( w_{3}\right)
\right) \cap Int\left( L^{p_{2}}\left( w_{2}\right) ,L^{p_{1}}\left(
w_{1}\right) \right) 
\end{equation*}%
(cf. \cite[Theorem~2.6]{BO06}). 

\vskip0.2cm

\subsection{Hardy spaces.}

Let $0<p\leq \infty $ and let $H^{p}:=H^{p}(\mathbb{R})$ be the (real) Hardy
space on the real line (see, for instance, \cite[§\,3.9.3]{BK91}). Then, as
is well known (see, for instance, \cite[Corollary~5.6.16, p.~374]{BSh}), for
all $0<\theta <1$ and $1\leq q\leq \infty $ we have 
\begin{equation*}
\left( H^{1},L^{\infty }\right) _{\theta ,q}=L^{p,q}
\end{equation*}%
where $1/p=1-\theta $. Moreover, the couple $\left( H^{1},L^{\infty }\right) 
$ has the uniform Calder\'{o}n-Mityagin property \cite{Sha88}. Consequently,
applying Corollary \ref{Ar-Cw2-cor} (see also Remark \ref{rem-Ar-Cw2}) for
all $1<p_{0}<p_{1}\leq \infty $ and $1\leq q_{0},q_{1}\leq \infty $ we get 
\begin{equation*}
Int\left( L^{p_{0},q_{0}},L^{p_{1},q_{1}}\right) =Int\left(
H^{1},L^{p_{1},q_{1}}\right) \cap Int\left( L^{p_{0},q_{0}},L^{\infty
}\right) .
\end{equation*}

Next, by \cite{FRS74} (see also \cite{Kis99}), for
all $0<p_{0}<p_{1}\le\infty $ and $0<\theta <1$ we have 
\begin{equation*}
\left( H^{p_{0}},H^{p_{1}}\right) _{\theta ,p}=H^{p},
\end{equation*}%
where $1/p=(1-\theta )/p_{0}+\theta /p_{1}$. Hence, from
Corollary \ref{Cor-admissibility} and Theorem \ref{Ar-Cw1} it follows that
for all $0<p_{0}<p_{2}<p_{3}<p_{1}\le\infty $ 
\begin{equation*}
Int^{KM}\left( H^{p_{2}},H^{p_{3}}\right) =Int^{KM}\left(
H^{p_{0}},H^{p_{3}}\right) \cap Int^{KM}\left( H^{p_{2}},H^{p_{1}}\right). 
\end{equation*}%
In particular, since the couple $\left( H^{1},H^{\infty
}\right) $ has the uniform Calder\'{o}n-Mityagin property \cite{Jon84} (see also \cite[p.~684]{BK91}), from Corollary \ref{Ar-Cw2-cor} and Remark \ref{rem-Ar-Cw2} it follows 
\begin{equation*}
Int\left( H^{p},H^{q}\right) =Int\left( H^{1},H^{q}\right) \cap Int\left(
H^{p},H^{\infty }\right) 
\end{equation*}%
for all $1<p<q<\infty .$ 

\vskip0.2cm

\subsection{Besov spaces.}

Let $B_{p,q}^{s}:=B_{p,q}^{s}(\mathbb{R}^{n})$ be the Besov spaces on $%
\mathbb{R}^{n}$ (see, for instance, \cite[§\,2.3]{Tr83}). Suppose $-\infty
<s_{0}<s_{1}<\infty $, $0<\theta <1$, $s=(1-\theta )s_{0}+\theta s_{1}$, $%
0<q_{0},q_{1},q\leq \infty $. Then, for all $0<p\leq \infty $ 
\begin{equation*}
\left( B_{p,q_{0}}^{s_{0}},B_{p,q_{1}}^{s_{1}}\right) _{\theta
,q}=B_{p,q}^{s}
\end{equation*}%
(see \cite[Theorem~2.4.2]{Tr83}). Hence, from Corollary \ref%
{Cor-admissibility} and Theorem \ref{Ar-Cw1} it follows that for all $\infty
<s_{0}<s_{2}<s_{3}<s_{1}<\infty $ and $0<q_{0},q_{1},q_{2},q_{3}\leq \infty $
we have 
$$
Int^{KM}\left( B_{p,q_{2}}^{s_{2}},B_{p,q_{3}}^{s_{3}}\right)
=Int^{KM}\left( B_{p,q_{0}}^{s_{0}},B_{p,q_{3}}^{s_{3}}\right) \cap
Int^{KM}\left( B_{p,q_{2}}^{s_{2}},B_{p,q_{1}}^{s_{1}}\right) .$$

\vskip0.2cm

\subsection{Lorentz quasi-normed ideals of compact operators.}

Let ${H}$ be a separable complex Hilbert space. For every compact operator $A:\,H\to H$ let ${s}(A)=\{s_{n}({A})\}_{n=1}^{\infty }$ be the sequence of $%
s$-numbers of ${A}$ determined by the Schmidt expansion \cite{GK}. For every 
$p,q>0$, the class ${{\mathfrak{S}}}^{p,q}$ consists of all compact
operators ${A}:\,{H}\rightarrow {H}$ such that 
\begin{equation*}
{\Vert A\Vert }_{p,q}:={\Vert {s}(A)\Vert }_{l^{p,q}}<\infty ,
\end{equation*}%
where $l^{p,q}$, the discrete version of the function space $L^{p,q},$
consists of all sequences ${x}=(x_{n})_{n=1}^{\infty }$ of real numbers such
that the quasi-norm 
\begin{equation*}
{\Vert {x}\Vert }_{l^{p,q}}:=\left( \sum_{n=1}^{\infty }(x_{n}^{\ast
})^{q}n^{\frac{q}{p}-1}\right) ^{\frac{1}{q}}
\end{equation*}%
is finite. Here, ${x}^{\ast }=(x_{n}^{\ast })_{n=1}^{\infty }$ denotes the
nonincreasing permutation of the sequence $(|x_{n}|)_{n=1}^{\infty }$.

The classes ${{\mathfrak{S}}}^{p,q},$ $p,q>0,$ are two-sided symmetrically
quasi-normed ideals in the space of all bounded operators in ${H,}$ which
sometimes are referred to as Lorentz ideals. The classical Schatten-von
Neumann ideals ${{\mathfrak{S}}}^{p}$ correspond to the case $p=q,$ i.e., ${{%
\mathfrak{S}}}^{p}={{\mathfrak{S}}}^{p,p}$.

Let $0<p_{0}<p_{2}<p_{3}<p_{1}<\infty $ and $0<q_{0},q_{1},q_{2},q_{3}\leq
\infty $ be arbitrary. Since ${{\mathfrak{S}}}^{p_{i},q_{i}}=({{\mathfrak{S}}%
}^{r},{{\mathfrak{S}}}^{s})_{\theta _{i},q_{i}}$ for $0<r<p_{i}<s<\infty $
and $1/p_{i}=(1-\theta _{i})/r+\theta _{i}/s$ (see, for instance, \cite[§%
\,7.3]{BL76}, then from Corollary \ref{Cor-admissibility} it follows that
the couple $({{\mathfrak{S}}}^{p_{0},q_{0}},{{\mathfrak{S}}}^{p_{1},q_{1}})$
is mutually closed. Moreover, ${{\mathfrak{S}}}^{p_{2},q_{2}}=({{\mathfrak{S}%
}}^{p_{0},q_{0}},{{\mathfrak{S}}}^{p_{1},q_{1}})_{\theta ,q_{2}}$ and ${{%
\mathfrak{S}}}^{p_{3},q_{3}}=({{\mathfrak{S}}}^{p_{0},q_{0}},{{\mathfrak{S}}}%
^{p_{1},q_{1}})_{\eta ,q_{3}}$, where $1/p_{2}=(1-\theta )/p_{0}+\theta
/p_{1}$ and $1/p_{3}=(1-\eta )/p_{0}+\eta /p_{1}$ \cite[Theorem~5.3.1]{BL76}%
. Since $0<\theta <\eta <1$, by Theorem \ref{Ar-Cw1}, we have 
\begin{equation*}
Int^{KM}\left( {{\mathfrak{S}}}^{p_{2},q_{2}},{{\mathfrak{S}}}%
^{p_{3},q_{3}}\right) =Int^{KM}\left( {{\mathfrak{S}}}^{p_{0},q_{0}},{{%
\mathfrak{S}}}^{p_{3},q_{3}}\right) \cap Int^{KM}\left( {{\mathfrak{S}}}%
^{p_{2},q_{2}},{{\mathfrak{S}}}^{p_{1},q_{1}}\right) .
\end{equation*}

\newpage

\end{document}